\newcommand{\N}{{\mathbb N}}
\newcommand{\R}{{\mathbb R}}
\newcommand{\C}{{\mathbb C}}
\newcommand{\eps}{\varepsilon}
\newcommand{\im}{{\mathcal Im}}
\title[Stability and instability for quasi-linear
Schr\"odinger equations]{Stability and instability results for standing \\ waves of
quasi-linear Schr\"odinger equations}
\author{Mathieu Colin}
\address{
Math\'ematiques Appliqu\'ees de Bordeaux
\newline\indent
Universit\'{e} Bordeaux I
\newline\indent
351, cours de la Lib\'{e}ration 33405 Talence Cedex, France}
\email{Mathieu.Colin@math.u-bordeaux1.fr}
\author{Louis Jeanjean}
\address{Laboratoire de Math\'ematiques (UMR 6623)
\newline\indent
Universit\'{e} de Franche-Comt\'{e}
\newline\indent
16, Route de Gray 25030 Besan\c{c}on Cedex, France}
\email{louis.jeanjean@univ-fcomte.fr}
\author{Marco Squassina}
\address{Dipartimento di Informatica
\newline\indent
Universit\`a degli Studi di Verona
\newline\indent
C\'a Vignal 2, Strada Le Grazie 15, 37134 Verona, Italy}
\email{marco.squassina@univr.it}
\thanks{The third author was partially supported by the
Italian PRIN Research Project 2007 {\em Metodi variazionali e topologici
nello studio di fenomeni non lineari}}
\newtheorem{theorem}{Theorem}[section]
\newtheorem{proposition}[theorem]{Proposition}
\newtheorem{lemma}[theorem]{Lemma}
\theoremstyle{definition}
\newtheorem{remark}[theorem]{Remark}
\numberwithin{equation}{section}
\newcommand{\iu}{{\rm i}}
\newcommand{\V}{{\mathbb V}}
\newcommand{\lapl}{\Delta}
\begin{document}

\subjclass[2000]{35J40; 58E05}

\keywords{Quasi-linear Schr\"odinger equations, orbital stability, orbital instability by blow-up,
ground state solutions, variational methods}

\begin{abstract}
We study a class of quasi-linear Schr\"odinger equations arising in the theory
of superfluid film in plasma physics. Using gauge transforms and a derivation
process we solve, under some regularity assumptions, the Cauchy problem. Then,
by means of variational methods, we study the existence, the orbital stability
and instability of standing waves which minimize some associated energy.
\end{abstract}

\maketitle

\medskip
\begin{center}
\begin{minipage}{11cm}
\footnotesize
\tableofcontents
\end{minipage}
\end{center}

\medskip

\section{Introduction and main results}
Several physical situations are described by generic quasi-linear
equations of the form
\begin{equation}
    \label{eq.schr0-intro}
  \begin{cases}
  \iu\phi_t+\lapl\phi+\phi \ell^{'}(|\phi|^2)\lapl \ell( |\phi|^2)+f(|\phi|^2)\phi=0 & \text{in $(0,\infty)\times \R^N$},\\
  \phi(0,x)=a_0(x) & \text{in $\R^N$},
  \end{cases}
\end{equation}
where $\ell$ and $f$ are given functions. Here $\iu$ is the
imaginary unit, $N\geq 1$, $\phi:\R^N\to\C$ is a complex valued
function. For example, the particular case $\ell(s)=\sqrt{1+s}$ models the
self-channeling of a high-power ultra short laser in matter
(see~\cite{boga,deB,ritchie}) whereas if $\ell(s)=\sqrt{s}$,
equation~\eqref{eq.schr0-intro} appears in dissipative quantum
mechanics (\cite{hasse}). It is also used in plasma physics and
fluid mechanics (\cite{goldman,litvak}), in the theory of
Heisenberg ferromagnets and magnons (\cite{bass}) and in condensed
matter theory (\cite{makhankov}). The dynamical features are closely related
to the two functions $\ell$ and $f$. Only few intents have been
done to develop general theories for the Cauchy problem (see
nevertheless~\cite{co,kpv,poppenberg}). In this article we focus
on the particular case $\ell(s)=s$, that is
\begin{equation}
    \label{eq.schr1-intro}
  \begin{cases}
  \iu\phi_t+\lapl\phi+\phi\lapl |\phi|^2+f(|\phi|^{2})\phi=0 & \text{in $(0,\infty)\times \R^N$},\\
  \phi(0,x)=a_0(x) & \text{in $\R^N$}.
  \end{cases}
\end{equation}
 Our first result concerns the
Cauchy problem. Due to the quasi-linear term, it seems difficult
to exhibit a well-posedness result in the natural energy space
$$
X_\C =\Big\{u \in H^1(\R^N,\C): \int_{\R^N} |u|^2 |\nabla |u||^2
dx<\infty \Big\}.
$$
The local and global well-posedness of the Cauchy
problem~\eqref{eq.schr0-intro} have been studied by Poppenberg
in~\cite{poppenberg} in any dimension $N\geq 1$ and for smooth
initial data, precisely belonging to the space $H^\infty$. In
\cite{co}, equation \eqref{eq.schr0-intro} is solved locally in
the function space $L^\infty(0,T;H^{s+2}(\R^N))\cap C([0,T);H^s(\R^N)),$
where $s=2E(\frac{N}{2}) +2$ (here $E(a)$ denotes the integer part
of $a$) for any initial data and smooth nonlinearities $\ell$ and
$f$ such that there exists a positive constant $C_\ell$ with
\begin{align}\label{cond_co}
1-4\sigma\ell^{'2}(\sigma)> C_\ell \ell^{'2}(\sigma),\quad\text{for all $\sigma\in\R_+$}.
\end{align}
Note that the function $\ell(\sigma)=\sigma$ does not
satisfied~\eqref{cond_co} and, then, it is not possible to apply
\cite[Theorem 1.1]{co} to problem~\eqref{eq.schr1-intro}. Before stating
our result, we introduce the energy functional ${\mathcal E}$
associated with~\eqref{eq.schr1-intro}, by setting
$$
{\mathcal E}(\phi)=\frac{1}{2}\int_{\R^N}|\nabla
\phi|^2dx+\frac{1}{4}\int_{\R^N}|\nabla |\phi|^2|^2dx
-\int_{\R^N}F(|\phi|^2)dx,
$$
for all $\phi\in X_{\C}$, where $F(\sigma)=\int_0^\sigma f(u)du$.
Note that ${\mathcal E}(\phi)$ can also be written
$$
{\mathcal E}(\phi)=\frac{1}{2}\int_{\R^N}|\nabla
\phi|^2dx+\int_{\R^N}|\phi|^2|\nabla |\phi||^2dx
-\int_{\R^N}F(|\phi|^2)dx.
$$
We prove the following.
\begin{theorem}\label{cauchy}
Let $N\geq 1$, $s= 2E(\frac{N}{2})+2$ and assume that $a_0\in H^{s+2}(\R^N)$
and $f \in C^{s+2}(\R^N)$. Then there exists a positive $T$ and a unique
solution to the Cauchy problem $\eqref{eq.schr1-intro}$ satisfying
$$\phi(0,x)=a_0(x),$$
$$\phi \in L^{\infty}(0,T;H^{s+2}(\R^N))\cap C([0,T];H^s(\R^N)),$$
and the conservation laws
\begin{align}
    \|\phi(t)\|_{2} & =\|a_0\|_{2},   \label{massc}\\
    {\mathcal E}(\phi(t)) & ={\mathcal E}(a_0),\label{energyc}
\end{align}
for all $t\in [0,T]$.
\end{theorem}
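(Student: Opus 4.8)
The plan is to exploit the very feature that makes hypothesis \eqref{cond_co} fail — namely $\ell(s)=s$, i.e.\ $\ell'\equiv1$ — to turn \eqref{eq.schr1-intro} into a first--order-in-time \emph{system} whose principal part is uniformly Schr\"odinger-elliptic, and then to run a regularisation/energy/compactness scheme on that system in the spirit of \cite{co}. The starting point is purely algebraic: since $\Delta|\phi|^2=2|\nabla\phi|^2+2\,\mathrm{Re}(\overline\phi\,\Delta\phi)$, equation \eqref{eq.schr1-intro} is equivalent to
\[
\iu\phi_t+(1+|\phi|^2)\Delta\phi+\phi^2\,\Delta\overline\phi+2\phi|\nabla\phi|^2+f(|\phi|^2)\phi=0 .
\]
Coupling this identity with its complex conjugate, for $U=(\phi,\overline\phi)^{\mathsf T}$ one gets $\iu\,U_t=-A(\phi)\,\Delta U+(\text{first and zeroth order})$ with
\[
A(\phi)=\begin{pmatrix} 1+|\phi|^2 & \phi^2 \\ -(\overline\phi)^2 & -(1+|\phi|^2) \end{pmatrix},
\qquad
\det\bigl(A(\phi)-\mu I\bigr)=\mu^2-(1+2|\phi|^2),
\]
so $A(\phi)$ is diagonalisable over $\R$ with eigenvalues $\pm\sqrt{1+2|\phi|^2}$, which never collide and are bounded away from $0$; this positivity is the structural fact that replaces \eqref{cond_co}.

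Next comes the ``derivation process''. Applying $\partial^\alpha$ with $|\alpha|\le s+2$ to the system and tracking the top-order terms produces a linear-type system for the vector of all derivatives $D^{s+2}U$ whose principal part is again governed by $A(\phi)$, the remainder being polynomial in $\phi,\overline\phi$ and their derivatives up to order $s+2$; the choice $s=2E(\tfrac N2)+2$ (even, and comfortably larger than $N/2$) is exactly what lets one bound that remainder by a power of $\|\phi\|_{H^{s+2}}$ via the algebra and embedding properties of high Sobolev spaces. Diagonalising $A(\phi)$ pointwise by a smooth matrix $P(\phi)$ and setting $W=P(\phi)^{-1}D^{s+2}U$ splits the principal part into two scalar variable-coefficient Schr\"odinger equations $\iu\,w^\pm_t=\pm\sqrt{1+2|\phi|^2}\,\Delta w^\pm+b^\pm\cdot\nabla w^\pm+(\text{l.o.t.})$, where $b^\pm$ gathers the original gradient nonlinearity and the contribution of $\nabla P$. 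For equations of this type a bare $H^{s+2}$ energy estimate loses one derivative — the commutator of $\partial^\alpha$ with $\sqrt{1+2|\phi|^2}\,\Delta$ produces an unsigned term that the conservative Schr\"odinger energy cannot absorb — and this is overcome, as in \cite{co}, by a gauge transform $w^\pm\mapsto e^{\iu\theta^\pm(t,x)}w^\pm$ whose phase is chosen to cancel the obstructing first-order term, together with a local smoothing gain of half a derivative (the non-trapping behind the latter being automatic here, since $\sqrt{1+2|\phi|^2}\to1$ at spatial infinity). The output is an a priori bound $\sup_{[0,T]}\|\phi(t)\|_{H^{s+2}}\le C(\|a_0\|_{H^{s+2}})$ on a time $T=T(\|a_0\|_{H^{s+2}})>0$.

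Granting these estimates, existence follows in the usual way. One regularises \eqref{eq.schr1-intro} — e.g.\ by a bi-Laplacian viscosity (or a Galerkin truncation) — and solves the regularised problem in $C([0,T_\varepsilon];H^{s+2}(\R^N))$ by a fixed point argument, the viscous smoothing absorbing the quasi-linear term on a short time interval; the estimates above (with the viscous term contributing a favourable sign) then show $T_\varepsilon$ can be taken $\ge T$ and that $\{\phi_\varepsilon\}$ stays bounded in $L^\infty(0,T;H^{s+2})$, while the equation bounds $\partial_t\phi_\varepsilon$ in a weaker norm. An Aubin--Lions compactness argument — supplemented by a tightness/weak-continuity argument to reach the endpoint regularity $C([0,T];H^s)$ — gives $\phi_\varepsilon\to\phi$ strongly in $C([0,T];H^s_{\mathrm{loc}})$ and $*$-weakly in $L^\infty(0,T;H^{s+2})$, enough to pass to the limit in all nonlinear terms (at most quadratic in the top-order derivatives) and to obtain a solution in the asserted class. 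Uniqueness and continuous dependence follow by subtracting two solutions with the same datum and running the same gauged energy estimate at the lower level $H^s$, the two given solutions playing the role of fixed, controlled coefficients.

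Finally, the conservation laws \eqref{massc}--\eqref{energyc} come from the classical multiplier identities, first for the smooth approximations and then in the limit: pairing the equation with $\overline\phi$ in $L^2(\R^N)$ and taking imaginary parts gives $\tfrac{d}{dt}\|\phi\|_2^2=0$, the quasi-linear term contributing $\mathrm{Im}\int_{\R^N}|\phi|^2\Delta|\phi|^2\,dx=0$ because its integrand is real; pairing with $\overline{\phi_t}$ and taking real parts yields $\tfrac{d}{dt}\mathcal E(\phi)=0$, the quasi-linear term contributing, after one integration by parts, $\tfrac14\tfrac{d}{dt}\int_{\R^N}|\nabla |\phi|^2|^2\,dx$ and the kinetic and potential terms behaving as in the semilinear case. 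I expect the single hardest point to be the uniform $H^{s+2}$ a priori estimate of the second step: the whole scheme hinges on defeating the derivative loss inherent in quasi-linear Schr\"odinger systems, which is precisely where the gauge transform and the local smoothing effect are needed.
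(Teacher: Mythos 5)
Your starting point coincides with the paper's: the same system in $(\phi,\overline\phi)$ with the matrix $\mathcal{A}(\phi)$, whose eigenvalues $\pm\sqrt{1+2|\phi|^2}$ never vanish. From there, however, you branch onto a Kenig--Ponce--Vega-type route (pointwise diagonalization, a phase gauge, local smoothing, viscosity regularization plus Aubin--Lions), whereas the paper follows \cite{co}: it differentiates the system in space and time, works with the enlarged set of unknowns \eqref{new_unkn}, closes a pure $L^2$-energy estimate on the linearized system \eqref{cauchy12}--\eqref{cauchy15} by means of \emph{real exponential} gauges $e^{g(|\phi|^2)}$, $e^{q(|\phi|^2)}$, $e^{b(|\phi|^2)}$ with $q(\sigma)=\tfrac34\ln(1+2\sigma)$, $b(\sigma)=\tfrac{s}{4}\ln(1+2\sigma)$, and concludes by a contraction mapping; no smoothing estimate enters at any point. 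The difference is not only cosmetic, because the key step of your scheme is flawed as stated. For a scalar model $\iu w_t+a\Delta w+b\cdot\nabla w+\dots=0$ with $a$ real, pairing with $\overline w$ and taking imaginary parts shows that the terms obstructing the $L^2$ estimate are $\int \Re(b)\cdot\Im(\nabla w\,\overline w)\,dx$ and $-\int\nabla a\cdot\Im(\nabla w\,\overline w)\,dx$: it is the \emph{real} part of the first-order coefficient (and the gradient of the principal coefficient) that hurts, while the imaginary part integrates by parts to a zeroth-order term. A transform $w\mapsto e^{\iu\theta}w$ with real phase replaces $b$ by $b+2\iu a\nabla\theta$, i.e.\ it modifies only the harmless imaginary part and cannot cancel the obstruction. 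What is needed is a real amplitude gauge $e^{\lambda}$ with $2a\nabla\lambda=-\Re(b)$, which exists only if $\Re(b)/2a$ is an exact gradient; the paper's whole point is the structural computation showing that, for this specific nonlinearity, the real parts of the offending diagonal coefficients are exact gradients of functions of $|\phi|^2$, which dictates the explicit choices of $q$ and $b$ above. After conjugating by your diagonalizer $P(\phi)$, whose $\nabla P$ and $P_t$ also feed into $b^\pm$ and the lower-order terms, you have verified no such exactness, so the announced cancellation is not available.

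The second gap is the appeal to ``a local smoothing gain of half a derivative, the non-trapping \dots being automatic''. There is no smallness assumption on $a_0$, so $\sqrt{1+2|\phi|^2}$ is a non-small (though spatially decaying) perturbation of the flat coefficient and non-trapping of the associated bicharacteristic flow is not automatic; moreover, establishing local smoothing for time-dependent coefficients of limited Sobolev regularity that are themselves part of the iteration is the technical core of \cite{kpv}, not a remark one can borrow. If the smoothing ingredient is dropped — as the paper shows it can be — the entire burden falls back on the gauged energy estimate, i.e.\ precisely on the step discussed above. Your remaining ingredients are unobjectionable in outline (the viscosity/compactness passage, uniqueness at the level $H^s$, and the derivation of \eqref{massc}--\eqref{energyc} with the multipliers $\overline\phi$ and $\overline\phi_t$ match the paper's intentions), but note one more structural device you would need: because the variable coefficient must be carried somewhere, the paper multiplies by $\mathcal{A}^{-1}(\psi_0)$ and places it on the time-derivative term, so the modified energy has time-dependent weights and one must control $(\phi_0)_t$ in $L^\infty$; this is exactly why the extra unknown $\phi_{N+1}=e^{g(|\phi|^2)}\phi_t$ and the bound on $\|(\phi_0)_t\|_{H^{E(N/2)+1}}$ appear, a point absent from your sketch.
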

The proof of Theorem \ref{cauchy} follows the approach developed
in~\cite{co}. It is based on energy methods and to overcome the
loss of derivatives induced by the quasi-linear term, gauge
transforms are used. We rewrite equation~\eqref{eq.schr0-intro} as
a system in $(\phi,\overline{\phi})$ where $\overline{z}$ denotes
the complex conjugate of $z$. Then, we differentiate the resulting
equation with respect to space and time in order to linearize the
quasi-linear part and we introduce a set of new unknowns
(see~\eqref{new_unkn}). A fixed-point procedure is then applied on the
linearized version. Since~\eqref{cond_co} does not hold we need,
with respect to~\cite{co}, to modify the linearized version and
to perform different energy estimates on the Schr\"odinger part of
the equation.
\medskip

Next, equipped with Theorem~\ref{cauchy} and motivated by~\cite{BoCh} we
investigate some questions of existence, stability and instability of standing
waves solutions of~\eqref{eq.schr1-intro}, when $f$ is the power nonlinearity
$f(s)=s^{\frac{p-1}{2}}$, with $p>1$. In this case~\eqref{eq.schr1-intro}
becomes
\begin{equation}
    \label{eq.schr1}
  \begin{cases}
  \iu\phi_t+\lapl\phi+\phi\lapl |\phi|^2+|\phi|^{p-1}\phi=0 & \text{in $(0,\infty)\times \R^N$},\\
  \phi(0,x)=a_0(x) & \text{in $\R^N$}.
  \end{cases}
\end{equation}
If $p>1$ is an odd integer or $p> 4 E(\frac{N}{2}) + 9$ then $f(s) =
s^{\frac{p-1}{2}}$ belongs to $C^{2+s}(\R^N)$ and thus Theorem~\ref{cauchy}
applied. In the remaining cases, when we state our stability or instability
results, we shall always assume that there exists a solution to the Cauchy
problem~\eqref{eq.schr1} for our nonlinearity and our initial data $a_0 \in
X_{\C}$.
\medskip

By standing waves, we mean solutions of the form $\phi_\omega
(t,x)=u_\omega (x)e^{-i\omega t}$. Here $\omega$ is a fixed
parameter and $\phi_\omega (t,x)$ satisfies problem~\eqref{eq.schr1-intro}
if and only if $u_\omega$ is a solution of the equation
\begin{equation}\label{gs1}
    -\lapl u-u\lapl (|u|^2)+\omega u =|u|^{p-1}u,\qquad  \text{in $\R^N$}.
\end{equation}
Throughout the paper we assume that $1<p< \frac{3N+2}{N-2}$ if $N
\geq 3$ and $p>1$ if $N=1,2$. A function $u \in X_\C$ is called a
(complex) weak solution of equation~\eqref{gs1} if
\begin{equation} \label{weak-solution}
\Re\int_{\R^N}\Big(\nabla u\cdot \nabla \overline{\phi} + \nabla
(|u|^2)\cdot \nabla(u \overline{\phi}) + \omega u \overline{\phi}
- |u|^{p-1}u \overline{\phi}\Big) \, dx =0
\end{equation}
for all $\phi \in C_0^{\infty}(\R^N,\C)$ (here $\Re(z)$ is the real part of $z\in\C$).
We say that a weak solution of~\eqref{gs1}
is a ground state if it satisfies
\begin{eqnarray}\label{defgs}
\mathcal{E}_\omega(u)=m_\omega,
\end{eqnarray}
where
\begin{equation*}
m_{\omega}=\inf\{ {\mathcal E}_{\omega}(u): \text{$u$ is a
nontrivial weak solution of~\eqref{gs1}}\}.
\end{equation*}
Here, $\mathcal{E}_\omega$ is the action associated
with~\eqref{gs1} and reads
$$
{\mathcal E}_{\omega}(u) = \frac{1}{2}\int_{\R^N} |\nabla u|^2 dx
+ \frac{1}{4}\int_{\R^N}  |\nabla |u|^2|^2 dx+\frac{\omega}{2}\int_{\R^N}
|u|^2 dx - \frac{1}{p+1}\int_{\R^N} |u|^{p+1}dx.
$$
We denote by ${\mathcal G_{\omega}}$ the set of weak solutions
to~\eqref{gs1} satisfying~\eqref{defgs}. It is easy to check that
$u$ is a weak solution of equation~\eqref{gs1} if, and only if,
$$
{\mathcal E}_{\omega}'(u) \phi := \lim_{t \to 0^+}\frac{{\mathcal E}_{\omega}(u+t\phi) - {\mathcal
E}_{\omega}(u)}{t}=0,
$$
for every direction $\phi \in C_0^{\infty}(\R^N,\C)$.

\medskip

First we establish the existence of ground states to~\eqref{gs1}
and we derive some qualitative properties of the elements of
${\mathcal G_{\omega}}$. Our existence result complements the ones
of~\cite{cojean,LiWaWa1,LiWaWa2,LiWa,poppenberg2}.

\begin{theorem}
    \label{ex}
For all $\omega>0$, ${\mathcal G_{\omega}}$ is non void and any
$u \in {\mathcal G_{\omega}}$ is of the form
\begin{equation*}
u(x)=e^{i\theta}|u(x)|,\quad x\in\R^N,
\end{equation*}
for some $\theta\in {\mathbb S}^{1}$. In particular, the elements
of ${\mathcal G_{\omega}}$ are, up to a constant complex phase,
real-valued and non-negative. Furthermore any real non-negative
ground state $u\in {\mathcal G_{\omega}}$ satisfies the following
properties
\begin{itemize}
\item[i)] \text{$u>0$ in $\R^N$},
\item[ii)]  \text{$u$ is a radially symmetric decreasing
  function with respect to some point},
\item[iii)]  $u \in C^2(\R^N)$,
\item[iv)] \text{for all $\alpha \in \N^N$ with $|\alpha| \leq 2$,
there exists $(c_{\alpha},\delta_{\alpha})\in(\R_+^*)^2$ such that}
$$
 |D^{\alpha}u(x)|
\leq C_{\alpha}e^{-\delta_{\alpha}|x|},\quad\text{for all $x\in\R^N$}.
$$
\end{itemize}
Moreover, in the case $N=1$, there exists a unique positive ground
state to \eqref{gs1} up to translations.
\end{theorem}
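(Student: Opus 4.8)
\emph{Proof proposal.}
The plan is to remove the quasi-linear term by the Colin--Jeanjean dual change of variable. For real $u$ set $v=G(u)$, where $G$ is the odd primitive of $t\mapsto\sqrt{1+2t^2}$; then $G$ is a $C^\infty$-diffeomorphism of $\R$ with inverse $g:=G^{-1}$, $g(0)=0$, $g'(0)=1$, and $g(t)\sim 2^{1/4}\,\mathrm{sgn}(t)\,|t|^{1/2}$ as $|t|\to\infty$. A direct computation gives $\mathcal E_\omega(u)=J_\omega(v)$ for every real $u\in X_\C$, with
$$
J_\omega(v)=\frac12\int_{\R^N}|\nabla v|^2\,dx+\frac\omega2\int_{\R^N}g(v)^2\,dx-\frac1{p+1}\int_{\R^N}|g(v)|^{p+1}\,dx,
$$
and $u$ is a real weak solution of \eqref{gs1} if and only if $v\in H^1(\R^N)$ is a critical point of the $C^1$ functional $J_\omega$, i.e. $-\Delta v=g'(v)\big(|g(v)|^{p-1}g(v)-\omega g(v)\big)$. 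Since $g(v)^2\sim\sqrt2\,|v|$ and $|g(v)|^{p+1}\sim c|v|^{(p+1)/2}$ for $|v|$ large, the potential $V(s):=\tfrac\omega2 g(s)^2-\tfrac1{p+1}|g(s)|^{p+1}$ is of Berestycki--Lions type (positive mass, $V''(0)=\omega>0$, $V$ negative far out), and its subcriticality, $(p+1)/2<2^*$, is precisely the standing assumption $p<\frac{3N+2}{N-2}$; this is what makes the whole scheme run.

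\emph{Existence and reduction to real non-negative functions.} By the Berestycki--Lions theorem applied to $J_\omega$ there is a least-action critical point $v_0>0$, which may be taken radially symmetric and strictly decreasing, whose level coincides with the mountain-pass level of $J_\omega$ and, by the usual deformation argument, with the infimum of $J_\omega$ over all its nontrivial critical points; hence $u_0:=g(v_0)>0$ is a real radial ground state of \eqref{gs1} and $\mathcal G_\omega\neq\emptyset$. For the structural statement let $u\in\mathcal G_\omega$ be arbitrary; after establishing (as in the next paragraph, working directly with the real and imaginary parts of $u$ and using Moser-type $L^\infty$ bounds) that $u$ is of class $C^2$, that $|u|>0$ in $\R^N$ and that $u$ and $\nabla u$ decay exponentially, I would argue as follows. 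Taking the imaginary part of \eqref{gs1} and using that $\bar u\,\Delta u$ is real, the current $\mathrm{Im}(\bar u\nabla u)=|u|^2\nabla\sigma$ is divergence-free (here $u=|u|e^{i\sigma}$, with $\sigma$ globally defined and at most linearly growing since $|u|>0$); integrating by parts against $\sigma$, legitimate by the decay, gives $\int_{\R^N}|u|^2|\nabla\sigma|^2\,dx=0$, so $\sigma\equiv\theta$ is constant and $u=e^{i\theta}|u|$. Combined with the diamagnetic identity $\mathcal E_\omega(u)=\mathcal E_\omega(|u|)+\tfrac12\int_{\R^N}|u|^2|\nabla\sigma|^2\,dx$, this also yields $|u|\in\mathcal G_\omega$ and that $m_\omega$ equals the least-action level above.

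\emph{Qualitative properties of a real non-negative ground state $u$.} Put $v=G(u)\geq0$; it solves $-\Delta v=g'(v)(g(v)^p-\omega g(v))$, whose right-hand side is a $C^\infty$ function of $v$ and lies in $L^\infty(\R^N)$, so a standard $L^q$--Schauder bootstrap gives $v\in C^{2,\alpha}_{\mathrm{loc}}(\R^N)$ and hence $u=g(v)\in C^2(\R^N)$, proving iii). For i): $g'(v)g(v)^p\geq0$ and $g'(v)g(v)=c(x)v$ with $c\in L^\infty$, so $v\geq0$ is a nontrivial supersolution of $-\Delta+\omega c$ and the strong maximum principle yields $v>0$, whence $u=g(v)>0$. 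For iv): $v\to0$ at infinity, so $-\Delta v+\tfrac\omega2 v\leq0$ for $|x|$ large; comparison with the barrier $Ce^{-\delta|x|}$, $0<\delta<\sqrt{\omega/2}$, gives exponential decay of $v$, hence of $u$ (as $0\leq g(t)\leq t$), and interior Schauder estimates on unit balls propagate it to $\nabla u$ and $D^2u$. For ii): with positivity, decay and regularity in hand and since the effective nonlinearity $s\mapsto g'(s)(g(s)^p-\omega g(s))$ is locally Lipschitz and negative for $s>0$ small, the moving-plane method of Gidas--Ni--Nirenberg applies to the equation for $v$, so $v$, and then $u=g(v)$ with $g$ increasing, is radially symmetric and strictly decreasing about a point. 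Finally, for $N=1$ the equation for $v$ is the autonomous ODE $v''=V'(v)$ with $v,v'\to0$ at $\pm\infty$, so $\tfrac12(v')^2=V(v)$; since $V>0$ on $(0,v_1)$ and $v_1$ (the first positive zero of $V$) is a simple zero, every positive decaying solution attains $v_1$ with vanishing derivative at exactly one point, which may be placed at $0$, and is then determined by the ODE — so $v$, and $u=g(v)$, is unique up to translations.

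\emph{Main obstacle.} The delicate point is the passage between $\mathcal E_\omega$ on the non-Banach, non-differentiable space $X_\C$ and the semilinear $J_\omega$ on $H^1(\R^N)$, together with the phase reduction: one must establish the regularity, the positivity of the modulus and the exponential decay of a \emph{possibly complex} ground state \emph{before} knowing that it is real, in order to legitimately integrate by parts and conclude $\nabla\sigma\equiv0$. A lesser but genuine point is that in the third paragraph the steps must be carried out in the order iii)$\to$i)$\to$iv)$\to$ii), since the moving-plane argument relies on the decay, which in turn relies on the $C^2$-regularity.
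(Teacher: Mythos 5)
Your skeleton (the dual change of variable $v=G(u)$, Berestycki--Lions existence for the transformed semilinear problem, then regularity, positivity, decay and symmetry for real non-negative ground states, and the $1$D ODE analysis for uniqueness) coincides with the paper's Steps I and IV, with the acceptable variant that you invoke moving planes where the paper cites the symmetry result of \cite{BJM}. The gap lies exactly where you yourself locate the ``main obstacle'', and it is not a technicality: it concerns the two steps the paper devotes its Steps II--III to. First, ${\mathcal G_{\omega}}$ is defined through the infimum of ${\mathcal E}_\omega$ over \emph{all} nontrivial weak solutions of \eqref{gs1}, complex ones included. Your correspondence with critical points of $J_\omega$ only yields that $u_0=g(v_0)$ minimizes the action among \emph{real} solutions; nothing in the proposal bounds ${\mathcal E}_\omega(u)$ from below by ${\mathcal E}_\omega(u_0)$ for a genuinely complex solution $u$ (e.g.\ one whose modulus vanishes somewhere, as vortex-type solutions of such equations do), so neither ``$u_0$ is a ground state'' nor ``${\mathcal G_{\omega}}\neq\emptyset$'' is justified. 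The paper closes this by the Pohozaev identity of Lemma~\ref{poho}: for any complex solution $u$, $P(u)=0$ forces $P(|u|)\leq 0$, and a dilation $v_\theta$ of $v=r^{-1}(|u|)$ lands on the Pohozaev manifold $A$ of the transformed problem, on which $v_\omega$ minimizes ${\mathcal T}_\omega$; this gives ${\mathcal E}_\omega(u)\geq {\mathcal E}_\omega(u_\omega)$ with no regularity or nondegeneracy of $u$ required.

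Second, your phase reduction is built in the wrong order. The divergence-free current argument needs $C^2$ regularity, exponential decay and, crucially, $|u|>0$ everywhere for a \emph{possibly complex} ground state, and the positivity cannot be obtained ``working with the real and imaginary parts and Moser bounds'': Kato's inequality only shows that $|u|$ satisfies a subsolution-type inequality, which is compatible with zeros (again, vortex solutions), so no maximum principle applies before you already know the phase is constant --- this is circular. The paper's Step III avoids the issue entirely: once one knows $|u|$ is itself a minimizer (ground state), the strict inequality that would result from $|\nabla|u||<|\nabla u|$ on a set of positive measure contradicts minimality, hence $|\nabla|u||=|\nabla u|$ a.e., which forces $\Re(\iu\bar u\nabla u)=0$ and $u=e^{\iu\theta}|u|$, with no a priori positivity, smoothness or decay of the complex $u$. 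As written, your proposal therefore leaves both the non-voidness of ${\mathcal G_{\omega}}$ and the constant-phase statement unproved; the soft energy-comparison mechanism (modulus comparison plus Pohozaev/rescaling) is the missing idea.
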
 \medskip

Observe that if $u \in {\mathcal G_{\omega}}$ is real and positive
any $v(x)=e^{i\theta}u(x-y)$ for $\theta\in {\mathbb S}^{1}$ and
$y \in \R^N$ belongs to ${\mathcal G_{\omega}}$. Except when $N=1$
we do not know if there exists a unique real positive ground state
up to translation. The proof of Theorem~\ref{ex} uses the
so-called dual approach introduced in~\cite{cojean} which
transforms equation~\eqref{gs1} into a semi-linear one which
belongs to the frame handle in~\cite{bere1,bere2}.
\medskip

Next we establish, for $p>1$ sufficiently large, a result of
instability by blow-up.
\begin{theorem}\label{instab}
    Assume that $\omega>0$,
    \begin{equation*}
        p>3+\frac{4}{N},
    \end{equation*}
    and that the conclusions of Theorem~\ref{cauchy} hold when
$\displaystyle f(s) = s^{\frac{p-1}{2}}$. Let
     $u \in X_{\C}$ be a ground state solution of
\begin{equation}
    \label{gseq}
    -\lapl u+u\lapl |u|^2+\omega u=|u|^{p-1}u\,\,\quad\text{in $\R^N$}.
\end{equation}
 Then, for all $\eps>0$, there exists
$a_0\in H^{s+2}(\R^N)$ such that $\|a_0-u\|_{H^1(\R^N)}<\eps$ and
the solution $\phi(t)$ of~\eqref{eq.schr1} with $\phi(0)=a_0$
blows up in finite time.
\end{theorem}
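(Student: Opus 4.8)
\emph{Scaling and variational setup.}
For $v\in X_\C\setminus\{0\}$ and $\lambda>0$ put $v^\lambda(x)=\lambda^{N/2}v(\lambda x)$, so that $\|v^\lambda\|_2=\|v\|_2$ and
$$
\mathcal{E}(v^\lambda)=\frac{\lambda^2}{2}\int_{\R^N}|\nabla v|^2\,dx+\frac{\lambda^{N+2}}{4}\int_{\R^N}|\nabla |v|^2|^2\,dx-\frac{\lambda^{N(p-1)/2}}{p+1}\int_{\R^N}|v|^{p+1}\,dx=:G_v(\lambda).
$$
Differentiating at $\lambda=1$ produces the Pohozaev-type functional
$$
P(v):=G_v'(1)=\int_{\R^N}|\nabla v|^2\,dx+(N+2)\int_{\R^N}|v|^2|\nabla |v||^2\,dx-\frac{N(p-1)}{2(p+1)}\int_{\R^N}|v|^{p+1}\,dx.
$$
Since $p>3+\frac4N$ we have $\frac{N(p-1)}{2}>N+2>2$, so a one–variable study of the three monomials shows that $G_v$ is strictly increasing on $(0,\lambda^*(v))$, strictly decreasing on $(\lambda^*(v),+\infty)$ with $G_v(\lambda)\to-\infty$, and concave on $[\lambda^*(v),+\infty)$, where $\lambda^*(v)>0$ is the unique maximum point. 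Testing the ground state equation \eqref{gs1} with $u$ (Nehari identity) and with $x\cdot\nabla u$ (Pohozaev identity, legitimate by the regularity and decay of Theorem~\ref{ex}) and combining the two, one gets $P(u)=0$; hence $\lambda^*(u)=1$ and $G_u''(1)<0$, the latter again thanks to $p>3+\frac4N$. I would moreover establish the variational characterisation
$$
m_\omega=\inf\{\mathcal{E}_{\omega}(v):v\in X_\C\setminus\{0\},\ P(v)=0\}\qquad(\ast)
$$
by minimising $\mathcal{E}_{\omega}$ on $\{P=0\}$ (the dual formulation of \cite{cojean} underlying Theorem~\ref{ex} supplies the compactness needed to produce a minimiser $w$), followed by a Lagrange multiplier argument in which the multiplier is forced to vanish because $\frac{d}{d\lambda}P(w^\lambda)\big|_{\lambda=1}=G_w''(1)\neq 0$ (again by $p>3+\frac4N$), so that $w$ solves \eqref{gs1} and $\mathcal{E}_{\omega}(w)\ge m_\omega$.

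\emph{Virial identity.}
Using the Hamiltonian form $i\phi_t=\mathcal{E}'(\phi)$ of \eqref{eq.schr1} and the anti–self–adjointness on $L^2(\R^N,\C)$ of the dilation generator $\Lambda=\frac N2+x\cdot\nabla$, one obtains, for any solution $\phi$ with $\int_{\R^N}|x|^2|\phi(t)|^2\,dx<\infty$,
$$
\frac{d^2}{dt^2}\int_{\R^N}|x|^2|\phi(t)|^2\,dx=8\,\frac{d}{d\lambda}\mathcal{E}\big(\phi(t)^\lambda\big)\Big|_{\lambda=1}=8\,P\big(\phi(t)\big).
$$
The regularity of the solutions provided by Theorem~\ref{cauchy} is far more than enough to justify the computation; the finiteness of the second moment is either propagated along the flow, or one runs the identity with $|x|^2$ replaced by a smooth truncation $R^2\theta(x/R)$ and lets $R\to+\infty$ after absorbing the error terms, as in the usual localised virial identity.

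\emph{Unstable data and blow-up.}
Fix $\eps>0$. Since $\lambda\mapsto u^\lambda$ is continuous into $H^1(\R^N)$ at $\lambda=1$, pick $\lambda>1$ with $\|u^\lambda-u\|_{H^1}<\eps/2$; as $G_u$ has a strict maximum at $\lambda=1$ this gives $\mathcal{E}(u^\lambda)=G_u(\lambda)<G_u(1)=\mathcal{E}(u)$ and $P(u^\lambda)=\lambda G_u'(\lambda)<0$. Approximating $u^\lambda$ in $H^1(\R^N)$ and in $L^2(\R^N;|x|^2dx)$ by a smooth, rapidly decreasing function and rescaling by a constant close to $1$, one obtains $a_0\in H^{s+2}(\R^N)$ with $\|a_0\|_2=\|u\|_2$, $\|a_0-u\|_{H^1}<\eps$, $\mathcal{E}(a_0)<\mathcal{E}(u)$, $P(a_0)<0$ and $\int_{\R^N}|x|^2|a_0|^2\,dx<\infty$. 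Let $\phi$ be the solution of \eqref{eq.schr1} with $\phi(0)=a_0$, on its maximal interval $[0,T^*)$. By \eqref{massc}--\eqref{energyc}, $\|\phi(t)\|_2=\|u\|_2$ and $\mathcal{E}(\phi(t))=\mathcal{E}(a_0)=:\mathcal{E}(u)-\delta$ with $\delta>0$. If $P(\phi(t_0))=0$ for some $t_0$, then $\phi(t_0)$ is a nonzero element of $\{P=0\}$ with $\mathcal{E}_{\omega}(\phi(t_0))=\mathcal{E}(u)-\delta+\frac\omega2\|u\|_2^2=m_\omega-\delta<m_\omega$, contradicting $(\ast)$; since $t\mapsto P(\phi(t))$ is continuous and $P(a_0)<0$, it follows that $P(\phi(t))<0$ for all $t\in[0,T^*)$. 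To quantify this, apply the analysis of $G_v$ with $v=\phi(t)$: the maximum point $\lambda^*:=\lambda^*(\phi(t))$ lies in $(0,1)$ (because $G_{\phi(t)}'(1)=P(\phi(t))<0$), $G_{\phi(t)}$ is concave on $[\lambda^*,1]$, and $\phi(t)^{\lambda^*}$ is a nonzero element of $\{P=0\}$ with $L^2$–norm $\|u\|_2$, so $(\ast)$ gives $G_{\phi(t)}(\lambda^*)\ge m_\omega-\frac\omega2\|u\|_2^2=\mathcal{E}(u)$. Hence, using that $-G_{\phi(t)}'$ is nondecreasing on $[\lambda^*,1]$ and $1-\lambda^*<1$,
$$
\delta\le G_{\phi(t)}(\lambda^*)-G_{\phi(t)}(1)=\int_{\lambda^*}^1\big(-G_{\phi(t)}'(\lambda)\big)\,d\lambda\le(1-\lambda^*)\big(-G_{\phi(t)}'(1)\big)\le-P\big(\phi(t)\big).
$$
Therefore $\frac{d^2}{dt^2}\int_{\R^N}|x|^2|\phi(t)|^2\,dx=8P(\phi(t))\le-8\delta<0$ on $[0,T^*)$; since this nonnegative quantity, together with its first time–derivative, is finite at $t=0$, the interval $[0,T^*)$ must be finite, i.e.\ $\phi$ blows up in finite time.

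\emph{Main obstacle.}
The crucial point is $(\ast)$: proving the variational characterisation of $m_\omega$ through the constraint $P=0$ in the quasi-linear space $X_\C$ — i.e.\ producing a constrained minimiser (a compactness question) and checking that the associated Lagrange multiplier vanishes; the dual–variable framework of \cite{cojean} behind Theorem~\ref{ex} is the natural device. A secondary technical issue is the rigorous derivation of the virial identity and the control of the second moment $\int_{\R^N}|x|^2|\phi(t)|^2\,dx$ along the flow produced by Theorem~\ref{cauchy}.
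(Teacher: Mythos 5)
Your overall architecture (virial identity, the $L^2$-invariant rescaling and its concavity, an invariant region along the flow, and a uniform negative bound on the virial functional leading to blow-up) is the same as the paper's, and those parts are sound: your $P$ is the paper's $Q$ from \eqref{defQ}, your concavity and quantitative estimate reproduce Lemma~\ref{rescprop} and the final argument of the paper's proof. The genuine gap is exactly the point you flag as the ``main obstacle'': the characterization $(\ast)$, $m_\omega=\inf\{\mathcal{E}_\omega(v):v\neq 0,\ Q(v)=0\}$, is never proved, and your sketch for it does not go through as stated. Minimizing $\mathcal{E}_\omega$ on $\{Q=0\}$ is a constrained problem whose compactness is not ``supplied by the dual formulation of \cite{cojean}'': under the change of unknown $u=r(v)$ the constraint becomes $\int\frac{1+(N+2)r^2(v)}{1+2r^2(v)}|\nabla v|^2\,dx=\frac{N(p-1)}{2(p+1)}\int|r(v)|^{p+1}dx$, which is not the scaling derivative of ${\mathcal T}_\omega$ (because $r(v)^\lambda\neq r(v^\lambda)$), so neither the Berestycki--Lions/Pohozaev machinery behind Theorem~\ref{ex} nor any compactness statement in \cite{cojean} applies to it. Solving this constrained minimization directly is precisely what the paper announces it avoids (``in contrast to \cite{BeCa}''), so asserting it without proof leaves the key step open; your Lagrange-multiplier observation ($G_w''(1)\neq 0$ forces the multiplier to vanish) is fine, but it only helps once a minimizer on $\{Q=0\}$ exists.

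The paper circumvents this by a different variational characterization. First, Lemma~\ref{nehari} identifies the ground states with the minimizers of $d_\omega=\inf\{\mathcal{E}_\omega:\ \mathcal{I}_\omega=0\}$ on the Nehari constraint, quoting the existence result of \cite{LiWaWa2} (this is where the restriction $p\geq 3$, automatic here since $p>3+\frac4N$, enters). Then the subsequent lemma shows $c_\omega=d_\omega=m_\omega$ for the \emph{mixed} constraint set ${\mathcal M}=\{Q=0,\ \mathcal{I}_\omega\leq 0\}$, where no minimization on ${\mathcal M}$ needs to be solved: one only needs the inequality $\mathcal{E}_\omega\geq d_\omega$ on ${\mathcal M}$, obtained by rescaling down to the Nehari manifold and using Lemma~\ref{rescprop}. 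Correspondingly, the invariant region along the flow consists of the three conditions $\mathcal{E}_\omega<m_\omega$, $Q<0$ \emph{and} $\mathcal{I}_\omega<0$ (the last one is what you would have to add, and it is easily checked to be preserved and to hold for the perturbed initial datum $u^\sigma$, $\sigma>1$). With ${\mathcal M}$ in place of $\{Q=0\}$ your contradiction and coercivity steps go through verbatim, because the rescaled function $\phi(t)^{\tilde\sigma}$ with $Q=0$ can always be pushed, if necessary, to a further rescaling lying in ${\mathcal M}$. So to complete your proof you must either prove $(\ast)$ (a nontrivial open step in your write-up) or replace it by the Nehari-based characterization as the paper does.
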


Observe that the assumptions of Theorems~\ref{cauchy}
and~\ref{instab} intersect for $p \geq 9$ when $N=1$, $p=7,9,11$
or $p \geq 13$ if $N=2$, $p=5,7,9$ if $N =3$ and $p=5$ if $N=4$.
\medskip

To prove Theorem~\ref{instab} we first establish a virial type
identity. Then, we introduce some sets which are invariant under
the flow, in the spirit of~\cite{BeCa}. At this point we take
advantage of ideas of~\cite{lecoz}. Namely, by introducing a
constrained approach and playing between various characterization
of the ground states, we are able to derive the blow up result
without having to solve directly a minimization problem, in
contrast to~\cite{BeCa}. \vskip3pt \noindent When $1<p <3 +
\frac{4}{N}$, we conjecture that the ground states solutions
of~\eqref{gs1} are orbitally stable. However, we did not manage to
prove this result (see Remark~\ref{lien} in that direction).
Instead, we consider the stability issue for the minimizers of the
problem
\begin{equation}
    \label{defvalc}
m(c)=\inf \{{\mathcal E}(u):\, \text{$u\in X$,\, $\|u\|_{2}^2 = c$}\},
\end{equation}
where the energy $\mathcal{E}$ reads as
$$
{\mathcal E}(u)=\frac{1}{2}\int_{\R^N}|\nabla u|^2dx+ \frac{1}{4}\int_{\R^N}
|\nabla |u|^2|^2 dx -\frac{1}{p+1}\int_{\R^N}|u|^{p+1}dx.
$$
We shall show that, if $p< 3 + \frac{4}{N}$ then $m(c)>-\infty$
for any $c
>0$. On the contrary, when $p>3 + \frac{4}{N}$, we have $m(c) = -
\infty$ for any $c>0$. See Lemma~\ref{boundedness}. \vskip4pt
\noindent Denote by $\mathcal{G}(c)$ the set of solutions to
\eqref{defvalc}. Our result of orbital stability is the following.

\begin{theorem}\label{stab}
Assume that
    \begin{equation*}
        1<p<3+\frac{4}{N},
    \end{equation*}
and let $c>0$ be such that $m(c)<0$. Then ${\mathcal G}(c)$ is non
void and  orbitally stable. Furthermore, in the two following
cases
\begin{align*}
&{\rm i)} \,\,\, 1<p<1+\frac{4}{N} \text{ and }c>0,\\
&{\rm ii)}\,\,\, 1+ \frac{4}{N} \leq p < 3 + \frac{4}{N} \text{
and } c
>0 \text{ is sufficiently large},
\end{align*}
we have $m(c)<0$.
\end{theorem}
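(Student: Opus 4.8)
The plan is to combine a concentration–compactness argument for the existence and precompactness of minimizing sequences with the standard Cazenave–Lions technique to upgrade precompactness into orbital stability; the sign conditions on $m(c)$ in cases i) and ii) will be obtained by testing $\mathcal{E}$ on a well-chosen scaling family. First I would pass to the dual formulation used in Theorem~\ref{ex}: writing $v$ for the image of $u$ under the change of unknown that turns $\sqrt{1+|\nabla|u|^2|^2}$-type quantities into an $H^1$ norm (more precisely, setting $v = g(u)$ where $g'(s)=\sqrt{1+2s^2}$, so that $|\nabla v|^2 = (1+2u^2)|\nabla u|^2$), the constraint $\|u\|_2^2=c$ and the functional $\mathcal{E}(u)$ become, respectively, a (nonlinear but equivalent) constraint and a functional on $H^1(\R^N)$ that is coercive precisely when $p<3+\tfrac4N$, by Lemma~\ref{boundedness}. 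Working in this transplanted setting, a minimizing sequence $(v_n)$ is bounded in $H^1$, so I may apply the concentration–compactness lemma to the mass densities $|v_n|^2$ (or equivalently $|u_n|^2$, which are comparable).

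Next I would rule out vanishing and dichotomy. Vanishing would force $\int |u_n|^{p+1}\to 0$ along a subsequence (by the usual vanishing lemma in $H^1$, using $p+1<2^*$ or $p+1 < \frac{2(3N+2)}{N-2}\wedge\cdots$ — in any case the exponent range in the paper is subcritical for the transplanted problem), hence $\liminf\mathcal{E}(u_n)\ge 0$, contradicting $m(c)<0$. Dichotomy is excluded by the strict subadditivity inequality $m(c) < m(\alpha) + m(c-\alpha)$ for $0<\alpha<c$; I would prove this by the standard scaling $u\mapsto u_\theta$, $u_\theta(x)=\theta^{1/2}u(x)$ which multiplies the mass by $\theta$, and checking that the homogeneity of the three terms of $\mathcal{E}$ (degrees related to $2$, $2$ and $p+1$ in $\theta$, after also allowing a spatial dilation to restore a critical point of the scaling) yields $m(\theta c)\le \theta^{\gamma} m(c)$ with $\gamma<1$ on the set where $m<0$; superadditivity of $\theta\mapsto\theta^\gamma m(c)$ then gives strict subadditivity. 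Compactness thus holds: up to translations, $u_n\to u$ strongly in $H^1$ and in the $X$-norm, so $\mathcal{G}(c)\ne\emptyset$ and every minimizing sequence is, up to translations and phase, precompact.

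Orbital stability then follows by the classical contradiction argument: if it failed, there would be initial data $a_0^k\to u\in\mathcal{G}(c)$ (in $X$) whose solutions $\phi^k(t_k)$ stay a fixed distance $\eps_0$ from $\mathcal{G}(c)$ at some times $t_k$; by the conservation laws \eqref{massc}–\eqref{energyc} (here is where Theorem~\ref{cauchy}, or the assumed existence of solutions for general $a_0\in X_\C$, is used — one also needs $\mathcal{E}$ and $\|\cdot\|_2$ to be continuous along the approximating data so that $\mathcal{E}(\phi^k(t_k))\to m(c)$ and $\|\phi^k(t_k)\|_2^2\to c$), the sequence $\phi^k(t_k)$ is minimizing for $m(c)$, hence precompact towards $\mathcal{G}(c)$, contradicting the separation. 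Finally, for the sign of $m(c)$: testing with $u_\theta(x)=\theta^{N/2}w(\theta x)$ for fixed $w\in X$ with $\|w\|_2^2=c$ gives $\mathcal{E}(u_\theta)=\tfrac{\theta^2}{2}\|\nabla w\|_2^2+\tfrac{\theta^{N+2}}{4}\|\nabla|w|^2\|_2^2-\tfrac{\theta^{N(p-1)/2}}{p+1}\|w\|_{p+1}^{p+1}$; when $p<1+\tfrac4N$ the exponent $N(p-1)/2<2$, so the last term dominates as $\theta\to0^+$ and $m(c)<0$ for all $c>0$, while for $1+\tfrac4N\le p<3+\tfrac4N$ one has $2\le N(p-1)/2<N+2$, and choosing $\theta=1$ but scaling instead the amplitude (replace $w$ by $\mu w$ with $\mu^2\|w\|_2^2=c$, so $c$ large corresponds to $\mu$ large) makes the $\mu^{p+1}$-term beat the $\mu^2$- and $\mu^4$-terms, giving $m(c)<0$ once $c$ is large enough.

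The main obstacle I expect is the compactness step in the original quasi-linear $X$-norm rather than in $H^1$: the functional $\mathcal{E}$ controls $\int|\nabla|u|^2|^2$ but weak $H^1$ convergence does not directly give strong convergence of $\nabla|u_n|^2$, so I would need either to run concentration–compactness entirely in the dual variable $v=g(u)$ (where the quartic term becomes part of an $H^1$ norm and weak lower semicontinuity plus norm convergence gives strong convergence cleanly) and then transfer back, or to argue strong convergence of $\nabla|u_n|$ via a Brezis–Lieb type splitting combined with the strict subadditivity. Making the dual transform interact correctly with the mass constraint and with translations — and checking that strict subadditivity survives the transform — is the delicate bookkeeping; everything else is standard.
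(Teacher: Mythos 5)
Your overall strategy (concentration--compactness plus the Cazenave--Lions contradiction argument) is the same as the paper's, but two key steps are stated in a way that does not work. First, the strict subadditivity you invoke to exclude dichotomy is backwards: a bound of the form $m(\theta c)\leq \theta^{\gamma}m(c)$ with $\gamma<1$ and $m(c)<0$ only says that $|m|$ grows \emph{sub}linearly, and a sublinear negative function (e.g.\ $m(c)=-c^{\gamma}$) satisfies $m(c)\geq m(c_1)+m(c_2)$, the opposite of what you need; superadditivity of the comparison function $\theta\mapsto\theta^{\gamma}m(c)$ does not transfer to $m$ because you only have one-sided bounds. What is actually needed is the superlinear inequality $m(\lambda d)<\lambda m(d)$ for $\lambda>1$ whenever $m(d)<0$, which the paper obtains by the pure spatial dilation $v(x)=u_d(\lambda^{-1/N}x)$ applied to a minimizer $u_d$ of $m(d)$ (whose existence is proved beforehand, independently of subadditivity, by a Schwarz symmetrization argument), giving ${\mathcal E}(v)=\lambda\bigl(\lambda^{-2/N}A-B\bigr)<\lambda m(d)$ since $A>0$; simple amplitude rescaling $u\mapsto\theta^{1/2}u$ cannot do this because the quasilinear term is quartic in $u$.

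Second, your proof that $m(c)<0$ for large $c$ in case ii) by amplitude scaling $w\mapsto\mu w$ fails on part of the range: the quasilinear term scales like $\mu^{4}$, so the nonlinear term $\mu^{p+1}$ dominates only when $p>3$, whereas case ii) contains $1+\frac{4}{N}\leq p\leq 3$ for every $N\geq 2$ (e.g.\ $N=4$, $p=2$ gives ${\mathcal E}(\mu w)\to+\infty$). The paper instead tests with large plateau functions $w_R$ ($w_R\equiv 1$ on $B(0,R)$, linear cutoff), for which the gradient and quasilinear terms are surface terms $O(R^{N-1})$ while the mass and the $L^{p+1}$ term grow like $R^{N}$, so ${\mathcal E}(w_R)\to-\infty$ as $R\to\infty$; combined with the dilation argument above this yields $m(c)<0$ for all large $c$. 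Finally, note that the paper does not pass to the dual variable here: it runs Lions' lemma directly on $\rho_n=u_n^2$ (bounded in $L^1$ with $\nabla\rho_n$ bounded in $L^2$) and uses Ioffe's lower semicontinuity theorem for the quasilinear term; your dual-variable route could in principle work, but precisely the constraint and scaling bookkeeping you defer (how subadditivity and the mass constraint behave under $v=g(u)$) is where the above difficulties must be resolved, so as written the dichotomy exclusion and case ii) remain genuine gaps.
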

Here we also have that, if $u \in {\mathcal G}(c)$, then any $v(x)
= e^{i \theta}u(x-y)$ for $\theta\in {\mathbb S}^{1}$ and $y \in
\R^N$ belongs to ${\mathcal G}(c)$. \vskip5pt \noindent In
Theorem~\ref{stab} when we say that ${\mathcal G}(c)$ is orbitally
stable we mean the following: For every $\eps>0$, there exists
$\delta>0$ such that, for any initial data $a_0\in X_{\C}\cap H^s(\R^N)$ such
that the conclusions of Theorem~\ref{cauchy} hold when $f(s) =
s^{\frac{p-1}{2}}$, if $\inf_{u\in{\mathcal
G}(c)}\|a_0-u\|_{H^1}<\delta$ then the solution $\phi(t,\cdot)$
of~\eqref{eq.schr1-intro} with initial condition $a_0$ satisfies
$$
\sup_{T_0\geq t\geq 0}\inf_{u\in {\mathcal
G}(c)}\|\phi(t,\cdot)-u\|_{H^1}<\eps,
$$
where $T_0$ is the existence time for $\phi$ given by Theorem
\ref{cauchy}. Note that our definition of stability requires the introduction
of the existence time $T_0$ for $\phi$ since we are not able to solve the Cauchy problem
\eqref{eq.schr1-intro} in the natural energy space $X_{\C}$.
The proof of Theorem~\ref{stab} is based on variational methods.
Assuming that $m(c) <0$ we obtain the convergence of any
minimizing sequence for~\eqref{defvalc} using concentration
compactness arguments and taking advantage of the autonomous
nature of~\eqref{defvalc}. This convergence result being
established, the proof of orbital stability follows in a standard fashion.
\medskip
\vskip3pt \noindent It is standard to show that $m(c)<0$ for all
$c>0$ when $1< p <1+ \frac{4}{N}$ (see~\cite{St}). When $1+
\frac{4}{N} \leq p < 3 + \frac{4}{N}$ we prove that there exists a
$c(p,N) >0$ such that $m(c) <0$ if $c > c(p,N)$. More precisely
the following occurs.

\begin{theorem}\label{no-minimizer}
Assume that $1 + \frac{4}{N} \leq p \leq 3 + \frac{4}{N}$. Then
there exists $c(p,N) >0$ such that
\begin{itemize}
\item[i)] If $0 < c < c(p,N)$ then  $m(c) =0$  and  $m(c)$  does not admit a minimizer.
\item[ii)] If $c > c(p,N)$ then $m(c)
<0$  and $m(c)$ admits a minimizer. In addition, 
the map $c \mapsto m(c)$ is strictly
decreasing.
\end{itemize}
\end{theorem}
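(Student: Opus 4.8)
The plan is to reduce Theorem~\ref{no-minimizer} to a one–parameter scaling analysis along the $L^{2}$–preserving dilations $u_{\lambda}(x)=\lambda^{N/2}u(\lambda x)$. For $u\in X$ with $\|u\|_{2}^{2}=c$ put $A=\int_{\R^{N}}|\nabla u|^{2}$, $B=\int_{\R^{N}}|\nabla|u|^{2}|^{2}$, $C=\int_{\R^{N}}|u|^{p+1}$ and $\sigma=\frac{N(p-1)}{2}$, so that $2\le\sigma\le N+2$ in the range considered. Then
$$
h_{u}(\lambda):=\mathcal E(u_{\lambda})=\frac{A}{2}\lambda^{2}+\frac{B}{4}\lambda^{N+2}-\frac{C}{p+1}\lambda^{\sigma},
$$
all three exponents are positive, so $h_{u}(\lambda)\to0$ as $\lambda\to0^{+}$, whence $m(c)\le\inf_{\lambda>0}h_{u}(\lambda)\le0$ for every $c>0$. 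Dividing by $\lambda^{\sigma}$ shows that $\inf_{\lambda>0}h_{u}(\lambda)<0$ holds if and only if $\frac{C}{p+1}>\inf_{\lambda>0}\big(\frac{A}{2}\lambda^{2-\sigma}+\frac{B}{4}\lambda^{N+2-\sigma}\big)$, and an elementary computation gives that this infimum equals $\kappa\,A^{(N+2-\sigma)/N}B^{(\sigma-2)/N}$ for an explicit constant $\kappa=\kappa(p,N)>0$ (at the two borderline exponents $\sigma=2$ and $\sigma=N+2$ the infimum is a limit and one of the two terms drops out, but the conclusion is the same). Introducing
$$
\mathcal R(u):=\frac{\int_{\R^{N}}|u|^{p+1}}{\big(\int_{\R^{N}}|\nabla u|^{2}\big)^{\frac{N+2-\sigma}{N}}\big(\int_{\R^{N}}|\nabla|u|^{2}|^{2}\big)^{\frac{\sigma-2}{N}}},\qquad \Sigma(c):=\sup_{u\in X,\ \|u\|_{2}^{2}=c}\mathcal R(u),
$$
the above reads: if $\mathcal R(u)<(p+1)\kappa$ then $h_{u}(\lambda)>0$ for all $\lambda>0$, in particular $\mathcal E(u)=h_{u}(1)>0$; if $\mathcal R(u)>(p+1)\kappa$ then $\inf_{\lambda>0}h_{u}(\lambda)<0$.

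Next I would determine the $c$–dependence of $\Sigma$. The quotient $\mathcal R$ is invariant under $u\mapsto u_{\lambda}$, and under $u\mapsto au$ ($a>0$) it is multiplied by $a^{4/N}$ (a one-line homogeneity check). Hence for any $c,c'>0$ the map $u\mapsto\sqrt{c'/c}\,u$ is a bijection of $\{u\in X:\|u\|_{2}^{2}=c\}$ onto $\{u\in X:\|u\|_{2}^{2}=c'\}$ under which $\mathcal R$ is multiplied by $(c'/c)^{2/N}$, so $\Sigma(c)=\Sigma(1)\,c^{2/N}$. Here $\Sigma(1)<\infty$ thanks to the Gagliardo–Nirenberg type inequality that underlies Lemma~\ref{boundedness} (this is exactly where the hypothesis $p\le3+\frac4N$ enters; for $p>3+\frac4N$ this quotient is unbounded, reflecting $m(c)=-\infty$). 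Set $c(p,N):=\big((p+1)\kappa/\Sigma(1)\big)^{N/2}$. If $0<c<c(p,N)$ then $\mathcal R(u)\le\Sigma(c)<(p+1)\kappa$ for every admissible $u$, hence $\mathcal E(u)>0$ for all such $u$; together with $m(c)\le0$ this gives $m(c)=0$ and shows $m(c)$ has no minimizer. If $c>c(p,N)$ then $\Sigma(c)>(p+1)\kappa$, so some admissible $u^{*}$ has $\mathcal R(u^{*})>(p+1)\kappa$, hence $\inf_{\lambda>0}h_{u^{*}}(\lambda)<0$, and testing the definition of $m(c)$ with the admissible competitors $u^{*}_{\lambda}$ yields $m(c)<0$.

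For $c>c(p,N)$ the existence of a minimizer is then immediate from Theorem~\ref{stab}, whose hypothesis $m(c)<0$ is now verified. Finally, for the strict monotonicity, fix $c(p,N)<c_{1}<c_{2}$, choose $u_{1}\in\mathcal G(c_{1})$, and test $m(c_{2})$ with $w(x)=u_{1}(x/t)$ where $t=(c_{2}/c_{1})^{1/N}>1$, so that $\|w\|_{2}^{2}=c_{2}$. With $P=\frac12\int_{\R^{N}}|\nabla u_{1}|^{2}+\frac14\int_{\R^{N}}|\nabla|u_{1}|^{2}|^{2}>0$ and $Q=\frac1{p+1}\int_{\R^{N}}|u_{1}|^{p+1}>0$ one has $\mathcal E(u_{1})=P-Q=m(c_{1})<0$, hence $Q>P$, and $\mathcal E(w)=t^{N-2}P-t^{N}Q$. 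Inspecting the cases $N=1$, $N=2$, $N\ge3$ and using $Q>P>0$, $t>1$, one checks $t^{N-2}P-t^{N}Q<P-Q$, so $m(c_{2})\le\mathcal E(w)<m(c_{1})$.

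The only genuinely delicate ingredient is the finiteness of $\Sigma(1)$ — equivalently the sharp Gagliardo–Nirenberg inequality involving the quasi-linear term $\int_{\R^{N}}|\nabla|u|^{2}|^{2}$ — but this is precisely the estimate used to prove $m(c)>-\infty$ in Lemma~\ref{boundedness}, so here it may simply be invoked; the critical exponent $p=3+\frac4N$, where $\sigma=N+2$ and $h_{u}$ loses coercivity as $\lambda\to+\infty$, is borderline and has to be handled separately, depending on the best constant in that inequality, while everything else is routine scaling bookkeeping.
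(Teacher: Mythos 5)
Your scaling reduction is essentially sound, but the decisive ingredient --- finiteness of $\Sigma(1)$ --- is asserted with an incorrect justification, and this is a genuine gap. The inequality behind Lemma~\ref{boundedness} is \eqref{1.1}, which bounds $\int_{\R^N}|u|^{p+1}dx$ by powers of $\|u\|_2$ and of $\int_{\R^N}|u|^2|\nabla u|^2dx$ \emph{only}; it contains no factor $\|\nabla u\|_2$, whereas your quotient $\mathcal R$ carries the weight $A^{\frac{N+2-\sigma}{N}}$ with $A=\|\nabla u\|_2^2$ in its denominator. One cannot pass from \eqref{1.1} to your bound by estimating $B=\int_{\R^N}|\nabla |u|^2|^2dx$ in terms of $A$ on the sphere $\|u\|_2^2=1$: a tall thin spike carrying an arbitrarily small fraction of the mass, superposed with a wide flat bump carrying the rest, makes $B/A^{\frac{N+2}{2}}$ arbitrarily large. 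So what you need is a different, two--gradient Gagliardo--Nirenberg inequality,
\begin{equation*}
\int_{\R^N}|u|^{p+1}dx\;\le\; K\,\|u\|_2^{4/N}\,\|\nabla u\|_2^{2\frac{N+2-\sigma}{N}}\,\big\|\nabla (|u|^2)\big\|_2^{2\frac{\sigma-2}{N}},\qquad \sigma=\tfrac{N(p-1)}{2},
\end{equation*}
and it must be proved, not invoked. Fortunately it is true and elementary: write $p+1=\alpha\big(2+\tfrac4N\big)+(1-\alpha)\big(4+\tfrac4N\big)$ with $\alpha=\tfrac{N+2-\sigma}{N}\in[0,1]$, apply H\"older to get $\int|u|^{p+1}\le\big(\int|u|^{2+4/N}\big)^{\alpha}\big(\int|u|^{4+4/N}\big)^{1-\alpha}$, and control the first factor by the $L^2$-critical Gagliardo--Nirenberg inequality for $u$ and the second by the same inequality applied to $v=|u|^2$, namely $\|v\|_{2+2/N}^{2+2/N}\le K\|\nabla v\|_2^2\|v\|_1^{2/N}$. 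With this lemma inserted, your proof of part i) and of $m(c)<0$ for $c>c(p,N)$ is correct.

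Granting that fix, your route is genuinely different from the paper's and arguably cleaner for part i): the paper argues by contradiction through a putative minimizer, its Euler--Lagrange equation and the negativity of the Lagrange multiplier (Lemma~\ref{lagrange-stat}), and, in the range $\frac{N+2}{N-2}<p<3+\frac4N$, a Moser iteration to obtain uniform $L^q$ bounds; you avoid all elliptic machinery, show directly that ${\mathcal E}>0$ on the whole constraint sphere when $c<c(p,N)$, and obtain $c(p,N)$ explicitly as a best-constant threshold (which also gives nonexistence on all of $(0,c(p,N))$ at once). Your treatment of existence for $c>c(p,N)$ (via Theorem~\ref{stab}, equivalently Lemma~\ref{general-minimizer}) and of strict monotonicity (the dilation $u(\cdot/t)$) coincides with the paper's. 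One caveat you should state explicitly rather than defer: at the endpoint $p=3+\frac4N$, which the statement formally includes, part ii) is out of reach of this argument --- there $\sigma=N+2$, and as soon as some admissible $u$ has $\mathcal R(u)>(p+1)\kappa$ one gets $h_u(\lambda)\to-\infty$, so $m(c)=-\infty$ and no minimizer exists; note, however, that the paper's own proof of ii) (resting on Lemma~\ref{negative}(3) and Lemma~\ref{general-minimizer}, both stated for $p<3+\frac4N$) is subject to the same restriction, so this is a limitation of the statement rather than a defect specific to your approach.
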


\vskip3pt

As we have already mentioned, the paper~\cite{BoCh} has motivated
the present work. However, we stress that~\cite{BoCh} is only
partially correct and that it deals with orbital stability issues,
no results on the Cauchy problem nor of instability are presented.
In Remark~\ref{connections} we compare our results to the ones
of~\cite{BoCh}. Apart from~\cite{BoCh}, to the best of our
knowledge, we are not aware of any other results comparable to
those of our paper.

\vskip2pt
\noindent

\vskip22pt
\begin{center}\textbf{Notations.}\end{center}
\begin{enumerate}
\item For a function $f :\R^N\to \R^N$ and $1\leq j\leq N$, we denote by $\partial_j f$ the partial derivative with respect to the $j^{th}$ coordinate.
\item $M(\R^N)$ is the set of measurable functions in $\R^N$. For any
$p>1$ we denote by $L^p(\R^N)$ the space of $f$ in $M(\R^N)$
such that $\int_{\R^N}|f|^pdx<\infty$.
\item The norm $(\int_{\R^N}|f|^pdx)^{1/p}$ in $L^p(\R^N)$ is denoted by $\|\cdot\|_p$.
\item For $s\in\N$, we denote by $H^s(\R^N)$ the Sobolev space of functions $f$ in $L^2(\R^N)$
having generalized partial derivatives $\partial_i^k f$ in
$L^2(\R^N)$, for $i=1,\dots, N$ and $0\leq k \leq s$.
\item The norm $(\int_{\R^N}|f|^2dx+ \int_{\R^N}|\nabla f|^2dx)^{1/2}$ in $H^1(\R^N)$ is denoted by $\|\cdot\|$ and more generally, the norm in $H^s$ is denoted by $\| \cdot\|_{H^s}$.
\item ${\mathcal L}^N(E)$ denotes the Lebesgue measure of a measurable set $E\subset\R^N$.
\item For $R>0$, $B(0,R)$ is the ball in $\R^N$ centered at zero with radius $R$.
\item $\Re(z)$ (resp.\ $\Im(z)$) denotes the real part (resp.\ the imaginary part) of a complex number $z$.
\item For a real number $r$, we denote by $E(r)$ the integer part of $r$.
\item $X$ denotes the restriction of $X_{\C}$ to real functions.
\item $K, K(p,N)$ denote various constants which are not
essential in the problem and may vary from line to line.
\end{enumerate}
\bigskip
\begin{center}\textbf{Organization of the paper.}\end{center}
In Section~\ref{cauchyproblem}, we prove Theorem~\ref{cauchy}
concerning the well-posedness result for
equation~\eqref{eq.schr1-intro}. In Section~\ref{instability}, we
establish the existence and properties of the ground states
solutions of~\eqref{gs1}, Theorem~\ref{ex} and we prove the
instability result, Theorem~\ref{instab}. In
Section~\ref{stationary-problems2}, we study the minimization
problem~\eqref{defvalc}. Assuming that $m(c) <0$  we prove the
existence of a minimizer and we study under which conditions $m(c)
<0$ holds. Finally, in Section~\ref{stability}, we prove the
convergence of all the minimizing sequences of~\eqref{defvalc} and
thus derive the stability result, Theorem~\ref{stab}.

\bigskip
\noindent {\bf Acknowledgments.} The first author thanks M. Ohta for helpful
observations concerning the Cauchy problem. The second author thanks Patrick
Hild for stimulating discussions around the non-existence result of
Theorem~\ref{no-minimizer}.

\medskip

\section{The Cauchy problem}\label{cauchyproblem}

This section is fully devoted to the proof of Theorem
\ref{cauchy}. \medskip

We first rewrite equation~\eqref{eq.schr1-intro} into a system
involving $\phi$ and $\overline{\phi}$ in the following way
\begin{eqnarray}\label{cauchy2}
2i \left( \begin{array}{l}
    \phi_t\\\overline{\phi}_t\end{array} \right)
+\mathcal{A}(\phi)\left( \begin{array}{l} \Delta \phi\\ \Delta
\overline{\phi} \end{array} \right) -\left(\begin{array}{l} 2\phi
|\nabla
    \phi|^2+\phi f(|\phi|^2)\\
    -2\overline{\phi}|\nabla
    \phi|^2-\overline{\phi}f(|\phi|^2)\end{array}\right)=0,
\end{eqnarray}
where $$\mathcal{A}(\phi)=\left(\begin{array}{ll}
1+|\phi|^2&\hspace{8mm}\phi^2\\
-\overline{\phi}^2&-(1+|\phi|^2)\end{array}\right).$$ A direct
calculation shows that $\mathcal{A}(\phi)$ is invertible and that
$$\mathcal{A}^{-1}(\phi)=\frac{1}{1+2|\phi|^2}\mathcal{A}(\phi).$$
In order to overcome the loss of derivatives and to linearize the
quadratic term involving $\nabla \phi$, we differentiate the
equation with respect to space and time variables to obtain a new
system in $\phi_0,\dots,\phi_{N+2}$ where $\phi_0=\phi$ and
\begin{align}\label{new_unkn}
\forall 1\leq j\leq N, \hspace{2mm} \phi_j=\partial_j \phi,
\hspace{2mm} \phi_{N+1}=e^{g(|\phi|^2)}\phi_t, \hspace{2mm}
\phi_{N+2}=e^{q(|\phi|^2)}\Delta \phi.
\end{align}
The functions $g$ and $q$ are used as gauge transforms and their
role will be explain later. We also set $\Phi^*=(\phi_j)_{j=0}^N$
and $\Phi=(\phi_j)_{j=0}^{N+2}$. Equation~\eqref{cauchy2} can be
rewritten as
\begin{eqnarray}\label{cauchy3}
2i \left( \begin{array}{l}
    (\phi_0)_t\\(\overline{\phi}_0)_t\end{array} \right)
+\mathcal{A}(\phi_0)\left( \begin{array}{l} \Delta \phi_0\\ \Delta
\overline{\phi_0} \end{array} \right) +\mathcal{F}_0(\Phi^*)=0,
\end{eqnarray}
where $\mathcal{F}_0$ is a smooth function depending only on
$\Phi^*$. Differentiating equation~\eqref{cauchy3} with respect to
$x_j$ for $j=1,\dots,N$, we obtain
\begin{align*}
&2i \left( \begin{array}{l}
    (\phi_j)_t\\(\overline{\phi}_j)_t\end{array} \right)
+\mathcal{A}(\phi_0)\left( \begin{array}{l} \Delta \phi_j\\ \Delta
    \overline{\phi}_j \end{array} \right)
 +\sum_{k=1}^N B(\phi_0,\phi_k)
\left( \begin{array}{l} T_{kj} \phi_{N+2}  \\ T_{kj}
\overline{\phi}_{N+2}
  \end{array}\right)\\
&+C(\phi_0,\phi_j)\left( \begin{array}{l} e^{-q(|\phi_0|^2)}
\phi_{N+2} \\ e^{-q(|\phi_0|^2)}
    \overline{\phi}_{N+2} \end{array} \right)
+\left(
  \begin{array}{l}F(\Phi^*,\phi_j)\\-\overline{F}(\Phi^*,\phi_j)
  \end{array}\right) =0,
\end{align*}
where $B$, $C$ and $F$ are smooth functions of their arguments and
especially
$$C(\phi_0,\phi_j)= \partial_j \mathcal{A}(\phi_0)=
\left( \begin{array}{ll}
\phi_0\overline{\phi_j}+\overline{\phi_0}\phi_j& 2\phi_0\phi_j\\
-2\overline{\phi}_0\overline{\phi}_j&-\phi_0\overline{\phi_j}-\overline{\phi_0}\phi_j
\end{array}\right).
$$

For $i,j=1,\dots,N$, $T_{ij}$ is the following operator of order 0
$$T_{ij} \phi=
\partial_i \partial_j \Delta^{-1} (e^{-q(|u_0|^2)}\phi).$$
We can rewrite these equations as follows
\begin{eqnarray}\label{cauchy4}
2i \left( \begin{array}{l}
    (\phi_j)_t\\(\overline{\phi}_j)_t\end{array} \right)
+\mathcal{A}(\phi_0)\left( \begin{array}{l} \Delta \phi_j\\ \Delta
    \overline{\phi}_j \end{array} \right)
 +\mathcal{F}_j(\Phi^*,\phi_{N+2},T \phi_{N+2})=0,
\end{eqnarray}
where $\mathcal{F}_j$ is a smooth function of its arguments.
Differentiating equation~\eqref{cauchy3} with respect to $t$, we
derive
\begin{eqnarray}\label{cauchy5}
\begin{array}{l}
\displaystyle 2i \left( \begin{array}{l}
    (e^{-f(|\phi_0|^2)}\phi_{N+1})_t\\(e^{-f(|\phi_0|^2)}\overline{\phi}_{N+1})_t\end{array} \right)
+C(\phi_0,e^{-f(|\phi_0|^2)}\phi_{N+1}) \left(
  \begin{array}{l} e^{-q(|\phi_0|^2)}\phi_{N+2}\\e^{-q(|\phi_0|^2)} \overline{\phi}_{N+2} \end{array}\right)\\
\displaystyle +\mathcal{A}(\phi_0)\left( \begin{array}{l} \Delta
(e^{-f(|\phi_0|^2)}\phi_{N+1})\\ \Delta
   (e^{-f(|\phi_0|^2)} \overline{\phi}_{N+1} )\end{array} \right)
+\sum_{k=1}^N B(\phi_0,\phi_k) \left( \begin{array}{l} \partial_k
    (e^{-f(|\phi_0|^2)} \phi_{N+1}) \\ \partial_k (e^{-f(|\phi_0|^2)}
    \overline{\phi}_{N+1})\end{array} \right) \\
\displaystyle+\left(\begin{array}{l}
F(\Phi^*,e^{-f(|\phi_0|^2)}\phi_{N+1})\\-\overline{F}(\Phi^*,e^{-f(|\phi_0|^2)}\phi_{N+1})\end{array}\right)=0,
\end{array}
\end{eqnarray}
which can be rewritten as
\begin{eqnarray}\label{cauchy6}
\begin{array}{l}
\displaystyle 2i \left(
\begin{array}{l}(\phi_{N+1})_t\\(\overline{\phi}_{N+1})_t
\end{array}\right)
+\mathcal{A}(\phi_0)\left( \begin{array}{l} \Delta \phi_{N+1} \\
\Delta
    \overline{\phi}_{N+1} \end{array} \right) +\sum_{k=1}^N
\mathcal{D}(\phi_0,\phi_k) \left( \begin{array}{l} \partial_k \phi_{N+1} \\
    \partial_k \overline{\phi}_{N+1} \end{array} \right)\\
\displaystyle  + \mathcal{G}(\Phi,T\phi_{N+2})=0,
\end{array}
\end{eqnarray}
where $\mathcal{D}$ and $\mathcal{G}$ are smooth functions of
their arguments. By applying the operator $\Delta$ on
equation~\eqref{cauchy3}, we obtain
\begin{eqnarray} \label{cauchy7}
\begin{array}{l}
\displaystyle 2i \left(\begin{array}{l} (\phi_{N+2})_t \\
(\overline{\phi}_{N+2})_t \end{array}\right)+ \mathcal{A}(\phi_0)
\left( \begin{array}{l} \Delta \phi_{N+2} \\ \Delta
    \overline{\phi}_{N+2} \end{array} \right) +\sum_{k=1}^N
\mathcal{E}(\phi_0,\phi_k) \left( \begin{array}{l} \partial_k \phi_{N+2} \\
    \partial_k \overline{\phi}_{N+2} \end{array} \right)\\
\displaystyle +\mathcal{I}(\Phi,T\phi_{N+2})=0,
\end{array}
\end{eqnarray}
where $\mathcal{E}$ and $\mathcal{I}$ are also smooth functions of
their arguments. At this point, we need to make more precise the
matrices $B$, $\mathcal{D}$ and $\mathcal{E}$ since they represent
the quasi-linear part of the equations. A direct computation gives
$$B(\phi_0,\phi_k)=\left(
  \begin{array}{ll}
  2\phi_0 \overline{\phi}_k & 2\phi_0\phi_k\\
  -2\overline{\phi}_0\overline{\phi}_k&-2\overline{\phi}_0\phi_k
\end{array}\right),$$
$$\mathcal{D}(\phi_0,\phi_k)=B(\phi_0,\phi_k)-2f^{'}(|\phi_0|^2)\mathcal{A}(\phi_0)\left(
  \begin{array}{ll}\phi_0 \overline{\phi}_k+\overline{\phi}_0 \phi_k
&\hspace{9mm}0\\ \hspace{9mm}0 & \phi_0 \overline{\phi}_k
+\overline{\phi}_0 \phi_k\end{array}\right),$$
\begin{align*}
\mathcal{E}(\phi_0,\phi_k)=& B(\phi_0,\phi_k) +2C(\phi_0,\phi_k)
\\&-2 \mathcal{A}(\phi_0)q^{'}(|\phi_0|^2)\left(
  \begin{array}{ll} \phi_0 \overline{\phi}_k +\overline{\phi}_0 \phi_k
    &\hspace{9mm}0\\\hspace{9mm}0&\phi_0\overline{\phi}_k +\overline{\phi}_0 \phi_k
\end{array}\right).
\end{align*}
Usual energy estimates for Schr\"odinger equations requires that
the diagonal coefficients of $\mathcal{D}$ and $\mathcal{E}$ in
equations~\eqref{cauchy6} and~\eqref{cauchy7} are purely
imaginary. Roughly speaking, this allows to integrate by parts the
bad terms including first order derivatives of the unknown. This
is why we make use of gauge transforms $g$ and $q$. Finally,
in order to avoid any smallness assumption on the initial data, we
need to transform slightly equation~\eqref{cauchy3} in the
following way. We multiply the equation by
$\mathcal{A}^{-1}(\phi_0)$ and we split the matrix in front of the
time derivatives of $\phi_0$ into
$$\mathcal{A}^{-1}(\phi_0)=Id +\Big(\mathcal{A}^{-1}(\phi_0)-Id\Big),$$
where $Id$ is the $2 \times 2$ identity matrix. Then recalling
that $\partial_t \phi_0=e^{-g(|\phi_0|^2)}\phi_{N+1}$, we rewrite
equation \eqref{cauchy3} in
\begin{eqnarray}\label{cauchy7.5}
2i \left( \begin{array}{l}
    (\phi_0)_t\\(\overline{\phi}_0)_t\end{array} \right)
+\left( \begin{array}{l} \Delta \phi_0\\ \Delta \overline{\phi_0}
\end{array} \right) +\mathcal{G}_0(\Phi)=0,
\end{eqnarray}
where
$$\mathcal{G}_0(\Phi)=\mathcal{A}^{-1}(\phi_0)\mathcal{F}_0(\Phi^*) +ie^{-g(|\phi_0|^2)}\Big(\mathcal{A}^{-1}(\phi_0)-Id\Big)
\left(\begin{array}{l} \phi_{N+1}\\\overline{\phi}_{N+1}
\end{array}\right).$$
We then have transformed equation~\eqref{eq.schr1-intro} into the
following system
\begin{eqnarray}\label{cauchy8}
2i \left( \begin{array}{l}
    (\phi_0)_t\\(\overline{\phi}_0)_t\end{array} \right)
+\left( \begin{array}{l} \Delta \phi_0\\ \Delta \overline{\phi_0}
\end{array} \right) +\mathcal{G}_0(\Phi)=0,
\end{eqnarray}
for $j=1,\dots,N$
\begin{eqnarray}\label{cauchy9}
2i \left( \begin{array}{l}
    (\phi_j)_t\\(\overline{\phi}_j)_t\end{array} \right)
+\mathcal{A}(\phi_0)\left( \begin{array}{l} \Delta \phi_j\\ \Delta
    \overline{\phi}_j \end{array} \right)
 +\mathcal{F}_j(\Phi^*,\phi_{N+2},T \phi_{N+2})=0,
\end{eqnarray}
\begin{eqnarray}\label{cauchy10}
\begin{array}{l}
\displaystyle 2i \left(
\begin{array}{l}(\phi_{N+1})_t\\(\overline{\phi}_{N+1})_t
\end{array}\right)
+\mathcal{A}(\phi_0)\left( \begin{array}{l} \Delta \phi_{N+1} \\
\Delta
    \overline{\phi}_{N+1} \end{array} \right) +\sum_{k=1}^N
\mathcal{D}(\phi_0,\phi_k) \left( \begin{array}{l} \partial_k \phi_{N+1} \\
    \partial_k \overline{\phi}_{N+1} \end{array} \right)\\
\displaystyle  + \mathcal{G}(\Phi,T\phi_{N+2})=0,
\end{array}
\end{eqnarray}
\begin{eqnarray} \label{cauchy11}
\begin{array}{l}
\displaystyle 2i \left(\begin{array}{l} (\phi_{N+2})_t \\
(\overline{\phi}_{N+2})_t \end{array}\right)+ \mathcal{A}(\phi_0)
\left( \begin{array}{l} \Delta \phi_{N+2} \\ \Delta
    \overline{\phi}_{N+2} \end{array} \right) +\sum_{k=1}^N
\mathcal{E}(\phi_0,\phi_k) \left( \begin{array}{l} \partial_k \phi_{N+2} \\
    \partial_k \overline{\phi}_{N+2} \end{array} \right)\\
\displaystyle +\mathcal{I}(\Phi,T\phi_{N+2})=0.
\end{array}
\end{eqnarray}
We now apply a fixed point theorem to system
\eqref{cauchy8}-\eqref{cauchy11}. Let $s$ be as in
Theorem~\ref{cauchy} and introduce the function space
$$\mathcal{X}_T= \left \{\begin{array}{l}  \Phi=(\phi_j)_{j=0}^{N+2}: \phi_j \in C([0,T];L^2(\R^N))
\cap L^{\infty}(0,T;H^{s}(\R^N)),  \\
\|\Phi\|_{X_T}=\sum_{j=0}^{N+2} \sup_{0 \leq t \leq T}
\|\phi_j(t)\|_{H^s(\R^N)} < \infty \end{array}\right\}.$$ For
$M=(m_j)_{j=0}^{N+2}\in (\R^*_+)^{N+3}$ and  $r\in \R_+^*$, we
denote
$$
\mathcal{X}_T(M,r)= \left\{ \begin{array}{l}
\Phi=(\phi_j)_{j=0}^{N+2}\in \mathcal{X}_T
: \forall j=0,..,N+2 \hspace{2mm} \|\phi_j\|_{L^{\infty}(0,T;H^s(\R^N))} \leq m_j \\
\|(\phi_0)_t\|_{L^{\infty}(0,T;H^{E(\frac{N}{2})+1}(\R^N))} \leq r
\text{ and } \phi_0(0,x)=a_0(x)
\end{array} \right\},$$
and let $\Psi=(\psi_j)_{j=0}^{N+2}\in \mathcal{X}_T(M,r)$. Denote
$\Psi^*=(\psi_j)_{j=0}^{N}$ and consider the linearized version of
system~\eqref{cauchy8}-\eqref{cauchy11} as follows
\begin{eqnarray}\label{cauchy12}
2i \left( \begin{array}{l}
    (\phi_0)_t\\(\overline{\phi}_0)_t\end{array} \right)
+\left( \begin{array}{l} \Delta \phi_0\\ \Delta \overline{\phi_0}
\end{array} \right) +\mathcal{G}_0(\Psi)=0,
\end{eqnarray}
for $j=1,\dots,N$
\begin{eqnarray}\label{cauchy13}
2i \left( \begin{array}{l}
    (\phi_j)_t\\(\overline{\phi}_j)_t\end{array} \right)
+\mathcal{A}(\psi_0)\left( \begin{array}{l} \Delta \phi_j\\ \Delta
    \overline{\phi}_j \end{array} \right)
 +\mathcal{F}_j(\Psi^*,\psi_{N+2},T \psi_{N+2})=0,
\end{eqnarray}
\begin{eqnarray}\label{cauchy14}
\begin{array}{l}
\displaystyle 2i \left(
\begin{array}{l}(\phi_{N+1})_t\\(\overline{\phi}_{N+1})_t
\end{array}\right)
+\mathcal{A}(\psi_0)\left( \begin{array}{l} \Delta \phi_{N+1} \\
\Delta
    \overline{\phi}_{N+1} \end{array} \right) +\sum_{k=1}^N
\mathcal{D}(\psi_0,\psi_k) \left( \begin{array}{l} \partial_k \phi_{N+1} \\
    \partial_k \overline{\phi}_{N+1} \end{array} \right)\\
\displaystyle  + \mathcal{G}(\Psi,T\psi_{N+2})=0,
\end{array}
\end{eqnarray}
\begin{eqnarray} \label{cauchy15}
\begin{array}{l}
\displaystyle 2i \left(\begin{array}{l} (\phi_{N+2})_t \\
(\overline{\phi}_{N+2})_t \end{array}\right)+ \mathcal{A}(\psi_0)
\left( \begin{array}{l} \Delta \phi_{N+2} \\ \Delta
    \overline{\phi}_{N+2} \end{array} \right) +\sum_{k=1}^N
\mathcal{E}(\psi_0,\psi_k) \left( \begin{array}{l} \partial_k \phi_{N+2} \\
    \partial_k \overline{\phi}_{N+2} \end{array} \right)\\
\displaystyle +\mathcal{I}(\Psi,T\psi_{N+2})=0.
\end{array}
\end{eqnarray}
Let $\mathcal{Z}=\left[ L^{\infty}(0,T;H^s(\R^N)) \cap
 C([0,T];L^2(\R^N)) \right]^{N+3}$.
Then the Cauchy problem $\eqref{cauchy12}$-$\eqref{cauchy15}$ with
initial condition
\begin{align*}
&\phi_0(0,x)=a_0(x), \; \text{for } j=1,\dots,N, \; \phi_j(0,x)=\partial_j a_0(x),\\
&\phi_{N+1}(0,x)=\frac{1}{2i}e^{g(|a_0(x)|^2)}\left(-\mathcal{A}(\phi_0(0))\Delta a_0(x) -\mathcal{F}_0(\Psi^*(0))\right),\\
&\phi_{N+2}(0,x)=e^{q(|a_0(x)|^2)}\Delta a_0(x),
\end{align*}
defines a mapping $\mathcal{S}$
\begin{align*}
\mathcal{S}: \hspace{2mm}&\mathcal{Z}\longrightarrow \mathcal{Z}\\
&\Psi \longmapsto \Phi.
\end{align*}
For more details on the existence result for
system~\eqref{cauchy12}-\eqref{cauchy15}, we refer to~\cite{co}
and~\cite{poppenberg}. In order to prove Theorem~\ref{cauchy}, we
have to find a time $T>0$ and constants $M\in (\R_+^*)^{N+3}$ and
$r\in \R_+^*$ such that $\mathcal{S}$ maps  the closed ball
${\mathcal X}_T(M,r)$ into itself and is a contraction mapping
under the constraint that it acts on ${\mathcal X}_T(M,r)$ in the
norm $\sum_{j=0}^{N+2}\text{sup}_{t\in[0,T]}\|\phi_j \|_{L^2}$. We
begin with equation \eqref{cauchy15} and perform an
$H^s$-estimate. Following \cite{co}, we apply the operator
$(1-\Delta)^{\frac{s}{2}}$ on equation \eqref{cauchy15} and
multiply the resulting equation by $\mathcal{A}^{-1}(\phi_0)$ to
obtain, denoting $\chi= (1-\Delta)^{\frac{s}{2}}\phi_{N+2}$
\begin{eqnarray} \label{cauchy16}
\begin{array}{l}
\displaystyle 2i \mathcal{A}^{-1}(\psi_0) \left(\begin{array}{l}
(\chi)_t \\ (\overline{\chi})_t \end{array}\right)+
 \left( \begin{array}{l} \Delta \chi \\ \Delta
    \overline{\chi} \end{array} \right) +\sum_{k=1}^N
\mathcal{L}(\psi_0,\psi_k,\partial_k\psi_0) \left( \begin{array}{l} \partial_k \chi \\
    \partial_k \overline{\chi}_{N+2} \end{array} \right)\\
\displaystyle
+\mathcal{J}_{j=0}^{s}(D^j\Psi,D^j\phi_{N+2},T\psi_{N+2})=0
\end{array}
\end{eqnarray}
where $D^j$ denotes any space derivation of order less or equal to
$s$ with respect to the $j^{th}$ space coordinate. The matrix $\mathcal{L}$ reads
$$\mathcal{L}(\psi_0,\psi_k,\partial_k\psi_0)=
\mathcal{A}^{-1}(\psi_0)\Big(\mathcal{E}(\psi_0,\psi_k)+s\partial_k
\mathcal{A}(\psi_0)\Big).$$ We notice here that the dependence of
$\mathcal{J}$ in $\phi_{N+2}$ and its derivatives  is affine. We
are now able to choose the gauge transform $q$. Recall that
\begin{align*}
\mathcal{E}(\psi_0,\psi_k)=& B(\psi_0,\psi_k) +2C(\psi_0,\psi_k)
\\&-2 \mathcal{A}(\psi_0)q^{'}(|\psi_0|^2)\left(
  \begin{array}{ll} \psi_0 \overline{\psi}_k +\overline{\psi}_0 \psi_k
    &\hspace{9mm}0\\\hspace{9mm}0&\psi_0\overline{\psi}_k +\overline{\psi}_0 \psi_k
\end{array}\right),
\end{align*}
a direct calculation shows that for $j=1,2$ (denoting by $b^{11}$
and $b^{22}$ the diagonal coefficients of a 2x2 matrix $b$),
$$\Re\Big(\mathcal{A}^{-1}(\psi_0)\big(B(\psi_0,\psi_k) +2C(\psi_0,\psi_k)\big)\Big)^{jj}=
\frac{3}{1+2|\psi|^2}\big( \psi_0\overline{\psi}_k
+\overline{\psi}_0 \psi_k\big).
$$
Then choosing
$$q(\sigma)=\frac{3}{4}\text{ln}(1+2\sigma)$$
gives
$$\Re\Big(\mathcal{A}^{-1}(\psi_0) \mathcal{E}(\psi_0,\psi_k)\Big)^{jj}=0.$$
Furthermore, by differentiating equation \eqref{cauchy16} $s$
times in space, we add in matrix $\mathcal{L}$ the term
$s\mathcal{A}^{-1}(\psi_0)\partial_k \mathcal{A}(\psi_0)$ which is
not eliminated by $q$. As a consequence we have to use a second
gauge transform by putting $\kappa=e^{b(|\psi_0|^2)}\chi$ solution
to
\begin{eqnarray} \label{cauchy17}
\begin{array}{l}
\displaystyle 2i \mathcal{A}^{-1}(\psi_0) \left(\begin{array}{l}
(\kappa)_t \\ (\overline{\kappa})_t \end{array}\right)+
 \left( \begin{array}{l} \Delta \kappa \\ \Delta
    \overline{\kappa} \end{array} \right) +\sum_{k=1}^N
\mathcal{M}(\psi_0,\psi_k,\partial_k\psi_0) \left( \begin{array}{l} \partial_k \kappa \\
    \partial_k \overline{\kappa}_{N+2} \end{array} \right)\\
\displaystyle
+\mathcal{K}_{j=0}^{s}(D^j\Psi,D^j\phi_{N+2},T\psi_{N+2},(\psi_0)_t)=0,
\end{array}
\end{eqnarray}
where
$$\mathcal{M}(\psi_0,\psi_k,\partial_k \psi_0)=
\mathcal{L}(\psi_0,\psi_k,\partial_k \psi_0) -2
\left(\begin{array}{ll}\partial_k b(|\psi_0|^2) &\hspace{4mm}0 \\
\hspace{4mm}0&
\partial_k b(|\psi_0|^2) \end{array}\right).
$$
Note that  the matrix $\mathcal{K}$ depends also on $(\psi_0)_t$.
Once again, an easy calculation shows that if we choose $b$ such
that
$$b(\sigma)=\frac{s}{4} \text{ln}(1+2\sigma),$$
then for $j=1,2$
$$\Re\Big(s\mathcal{A}^{-1}(\psi_0)\partial_k\mathcal{A}(\psi_0)-2\left(\begin{array}{ll}\partial_k b(|\psi_0|^2) &\hspace{4mm}0 \\ \hspace{4mm}0&
\partial_k b(|\psi_0|^2) \end{array}\right)\Big)^{jj}=0.$$
We are now able to perform the suitable energy estimate on
equation~\eqref{cauchy17}. Multiplying equation \eqref{cauchy17}
by $\overline{\kappa}$, integrate over $\R^N$ and taking the first
line of the resulting system leads to
\begin{align}\label{cauchy18}
&i\int_{\R^N}\frac{1+|\psi_0|^2}{1+2|\psi_0|^2}\kappa_t\overline{\kappa}dx+i\int_{\R^N}
\frac{\psi_0^2}{1+2|\psi_0|^2}\overline{\kappa}_t \overline{\kappa} dx+\int_{\R^N}\Delta \kappa \overline{\kappa}dx\nonumber\\
&+\sum_{k=1}^N\int_{\R^N} \mathcal{M}^{11}(\psi_0,\psi_k,\partial_k\psi_0)(\partial_k \kappa) \overline{\kappa}dx\nonumber\\
&+\int_{\R^N}\mathcal{M}^{12}(\psi_0,\psi_k,\partial_k\psi_0)(\partial_k \overline{\kappa} )\overline{\kappa}dx\\
&+\int_{\R^N}\mathcal{K}_{j=0}^{s}(D^j\Psi,D^j\phi_{N+2},T\psi_{N+2},(\psi_0)_t)\overline{\kappa}dx.\nonumber
\end{align}
We take the imaginary part of equation~\eqref{cauchy18}. We have
\begin{align*}
&\Im\Big(
i\int_{\R^N}\frac{1+|\psi_0|^2}{1+2|\psi_0|^2}\kappa_t\overline{\kappa}dx+i\int_{\R^N}
\frac{\psi_0^2}{1+2|\psi_0|^2}\overline{\kappa}_t \overline{\kappa} dx\Big)\\
&=\int_{\R^N}\frac{1+|\psi_0|^2}{2+4|\psi_0|^2}|\kappa|_t^2dx+\int_{\R^N}
\Big(\frac{\psi_0^2}{4(1+2|\psi_0|^2)}(\overline{\kappa}^2)_t+\frac{\overline{\psi}_0^2}{4(1+2|\psi_0|^2)}(\kappa^2)_t\Big)dx\\
&=\frac{d}{dt}\Big(
\int_{\R^N}\frac{1+|\psi_0|^2}{2+4|\psi_0|^2}|\kappa|^2dx+\int_{\R^N}
\Big(\frac{\psi_0^2}{4(1+2|\psi_0|^2)}\overline{\kappa}^2+\frac{\overline{\psi}_0^2}{4(1+2|\psi_0|^2)}\kappa^2\Big)dx\Big)\\
&-
\int_{\R^N}\Big(\frac{1+|\psi_0|^2}{2+4|\psi_0|^2}\Big)_t|\kappa|^2dx-\int_{\R^N}
\Big(\Big(\frac{\psi_0^2}{4(1+2|\psi_0|^2)}\Big)_t\overline{\kappa}^2+\Big(\frac{\overline{\psi}_0^2}{4(1+2|\psi_0|^2)}\Big)_t\kappa^2\Big)dx.
\end{align*}
The other terms in equation \eqref{cauchy18} are classical and can
be treated exactly as in \cite{co}. The important point to notice
is that since the diagonal coefficients of $\mathcal{M}$ are pure
imaginary, one has for $k=1,\dots,N$
\begin{align*}
\Im\Big(\int_{\R^N} \mathcal{M}^{11}(\psi_0,\psi_k,\partial_k\psi_0)(\partial_k \kappa) \overline{\kappa}dx\Big)&=\int_{\R^N} \im\big(\mathcal{M}^{11}(\psi_0,\psi_k,\partial_k\psi_0)\big)\partial_k \frac{|\kappa|^2}{2}dx,\\
&=-\int_{\R^N}\partial_k\Big(\im\big(
\mathcal{M}^{11}(\psi_0,\psi_k,\partial_k\psi_0)\big)\Big)\frac{|\kappa|^2}{2}
dx,
\end{align*}
by integration by parts. This allows to overcome the loss of
derivatives of this quasi-linear Schr\"odinger equation and brings
the following estimate
\begin{align}\label{cauchy19}
&\frac{d}{dt}\Big(
\int_{\R^N}\frac{1+|\psi_0|^2}{2+4|\psi_0|^2}|\kappa|^2dx+\int_{\R^N}
\Big(\frac{\psi_0^2}{4(1+2|\psi_0|^2)}\overline{\kappa}^2+\frac{\overline{\psi}_0^2}{4(1+2|\psi_0|^2)}\kappa^2\Big)dx\Big)\nonumber\\
&\leq 4\int_{\R^N}
(|\psi_0|^2)_t|\kappa|^2dx+C_1(M,r)\|\kappa\|_2^2,
\end{align}
where $C_1(M,r)$ is a constant depending only on $M$ and $r$. To
derive inequality \eqref{cauchy19}, we have used the fact that
\begin{align*}
&\Big(\frac{1+|\psi_0|^2}{2+4|\psi_0|^2}\Big)_t=
\frac{(|\psi_0|^2)_t}{2+4|\psi_0|^2}-4\Big(\frac{1+|\psi_0|^2}{(2+4|\psi_0|^2)^2}\Big)(|\psi_0|^2)_t
\end{align*}
\begin{align*}
&\Big(\frac{\psi_0^2}{4(1+2|\psi_0|^2)}\Big)_t=
\frac{(\psi_0^2)_t}{4(1+2|\psi_0|^2)}-\Big(\frac{\psi_0^2}{2(1+2|\psi_0|^2)^2}\Big)(|\psi_0|^2)_t
\end{align*}
\begin{align*}
&\Big(\frac{\overline{\psi}_0^2}{4(1+2|\psi_0|^2)}\Big)_t=
\frac{(\overline{\psi}_0^2)_t}{4(1+2|\psi_0|^2)}-\Big(\frac{\overline{\psi}_0^2}{2(1+2|\psi_0|^2)^2}\Big)(|\psi_0|^2)_t
\end{align*}
and then
\begin{align*}
&\Big|
\int_{\R^N}\Big(\frac{1+|\psi_0|^2}{2+4|\psi_0|^2}\Big)_t|\kappa|^2dx-\int_{\R^N}
\Big(\Big(\frac{\psi_0^2}{4(1+2|\psi_0|^2)}\Big)_t\overline{\kappa}^2+\Big(\frac{\overline{\psi}_0^2}{4(1+2|\psi_0|^2)}\Big)_t\kappa^2\Big)dx\Big|\\
&\leq 4  \int_{\R^N} (|\psi_0|^2)_t|\kappa|^2dx.
\end{align*}
Using the fact that
$$
\sup_{t\in[0,T]}\|(\psi_0)_t\|_{H^{E(\frac{N}{2})+1}(\R^N)}\leq r
$$
and the continuous embedding
$H^{E(\frac{N}{2})+1}(\R^N)\hookrightarrow L^\infty(\R^N)$, we can
find a constant $C_2(M,r)$ such that
\begin{align}\label{cauchy20}
&\displaystyle\frac{d}{dt}\Big(
\int_{\R^N}\frac{1+|\psi_0|^2}{2+4|\psi_0|^2}|\kappa|^2dx+\int_{\R^N}
\Big(\frac{\psi_0^2}{4(1+2|\psi_0|^2)}\overline{\kappa}^2+\frac{\overline{\psi}_0^2}{4(1+2|\psi_0|^2)}\kappa^2\Big)dx\Big)\nonumber\\
&\leq C_2(M,r)\|\kappa\|_2^2.
\end{align}
Integrating inequality~\eqref{cauchy20} from 0 to $t$ gives
\begin{align*}
&\int_{\R^N}\frac{1+|\psi_0(t)|^2}{2+4|\psi_0(t)|^2}|\kappa(t)|^2dx+\int_{\R^N}
\Big(\frac{\psi_0^2(t)}{4(1+2|\psi_0(t)|^2)}\overline{\kappa}^2(t)+\frac{\overline{\psi}_0^2(t)}{4(1+2|\psi_0(t)|^2)}\kappa^2(t)\Big)dx\\
&\leq
\int_{\R^N}\frac{1+|\psi_0(0)|^2}{2+4|\psi_0(0)|^2}|\kappa(0)|^2dx+\int_{\R^N}
\Big(\frac{\psi_0^2(0)}{4(1+2|\psi_0(0)|^2)}\overline{\kappa}^2(0)+\frac{\overline{\psi}_0^2(0)}{4(1+2|\psi_0(0)|^2)}\kappa^2(0)\Big)dx\\
&+C_2(M,r) \int_0^t \|\kappa(s)\|_2^2ds.
\end{align*}
For all $t\in [0,T]$, we have
$$\frac{1+|\psi_0(t)|^2}{2+4|\psi_0(t)|^2}|\kappa(t)|^2+\frac{\psi_0^2(t)}{4(1+2|\psi_0(t)|^2)}\overline{\kappa}^2(t)+\frac{\overline{\psi}_0^2(t)}{4(1+2|\psi_0(t)|^2)}\kappa^2(t)\geq \frac{1}{2+4|\psi_0(t)|^2}|\kappa(t) |^2.$$
Denoting by
$$CI_{N+2}(0)= \int_{\R^N}\frac{1+|\psi_0(0)|^2}{2+4|\psi_0(0)|^2}|\kappa(0)|^2dx+\int_{\R^N}
\Big(\frac{\psi_0^2(0)}{4(1+2|\psi_0(0)|^2)}\overline{\kappa}^2(0)+\frac{\overline{\psi}_0^2(0)}{4(1+2|\psi_0(0)|^2)}\kappa^2(0)\Big)dx,$$
we derive
\begin{align}\label{cauchy21}
\int_{\R^N}\frac{1}{2+4|\psi_0(t)|^2}|\kappa (t)|^2dx \leq
CI_{N+2}(0)+ C_2(M,r) \int_0^t \|\kappa(s)\|_2^2ds.
\end{align}
Recalling that $\psi_0\in L^\infty(0,T;H^{s}(\R^N))$ and the
continuous embedding $H^{s}(\R^N)\hookrightarrow L^\infty(\R^N)$
and denote by $C_b$ the best constant of this embedding, we have
$$\|\psi_0(t)\|_{L^\infty(\R^N)}\leq C_b m_0.$$
This provides
$$\int_{\R^N}\frac{1}{2+4|\psi_0(t)|^2}|\kappa(t) |^2dx \geq \frac{1}{2+4C_b^2m_0^2}\int_{\R^N}|\kappa(t) |^2dx$$
which gives
\begin{align}\label{cauchy22}
\int_{\R^N}|\kappa(t) |^2dx \leq
(2+4C_b^2m_0^2)\Big(CI_{N+2}(0)+C_2(M,r) \int_0^t
\|\kappa(s)\|_2^2ds\Big).
\end{align}
Since the gauge transform $b$ does not depend of $\psi_0$ and for all $t\in [0,T]$,
$\|\psi_0(t)\|_{L^\infty(\R^N)}\leq m_0$, there is a constant
$C(m_0)$ depending only on $m_0$ such that
$$\sup_{t\in[0,T]}\|e^{-b(|\psi_0(t)|^2)}\|_{L^\infty(\R^N)}^2 \leq C(m_0).$$
Recalling that
$\kappa(0)=e^{p(|a_0|^2)}(1-\Delta)^{\frac{s}{2}}(e^{q(|a_0|^2)}
\Delta a_0)$ and choosing $m_{N+2}$ such that
\begin{eqnarray}\label{cauchy23}
m_{N+2}^2\geq 2C(m_0)(2+4C_b^2m_0^2)CI_{N+2}(0) +1,
\end{eqnarray}
one can find a positive $T$ such that for this choice of $m_{N+2}$
\begin{align}\label{cauchy24}
\sup_{t\in [0,T] }\|\phi_{N+2}\|_{H^{s}(\R^N)} \leq m_{N+2}.
\end{align}
Note that $m_{N+2}$ depends only on the initial data $a_0$ and
$m_0$. Performing the same kind of estimates on equations
\eqref{cauchy15}, one can find a positive $T$ and constant
$m_{N+1}$ depending only on $a_0$ and $m_0$ satisfying
\begin{align}\label{cauchy25}
m_{N+1}^2\geq 2C(m_0)(2+4C_b^2m_0^2)CI_{N+1}(0) +1,
\end{align}
where
\begin{align*}
 CI_{N+1}(0) &=\int_{\R^N}\frac{1+|\psi_0(0)|^2}{2+4|\psi_0(0)|^2}|\nu(0)|^2dx  \\
& +\int_{\R^N}
\Big(\frac{\psi_0^2(0)}{4(1+2|\psi_0(0)|^2)}\overline{\nu}^2(0)+\frac{\overline{\psi}_0^2(0)}{4(1+2|\psi_0(0)|^2)}\nu^2(0)\Big)dx
\end{align*}
with
$$\nu(0)=e^{p(|a_0|^2)}(1-\Delta)^{\frac{s}{2}}(e^{g(|a_0|^2)} \partial_t a_0),$$
such that
\begin{align}\label{cauchy26}
\sup_{t\in [0,T] }\|\phi_{N+1}\|_{H^{s}(\R^N)} \leq m_{N+1}.
\end{align}
Dealing with equation~\eqref{cauchy13}, we introduce for
$j=1,\dots,N$
$$\mu_j(0)=(1-\Delta)^{\frac{s}{2}}\partial_j a_0$$
and
\begin{align*}
CI_{j}(0) & =\int_{\R^N}\frac{1+|\psi_0(0)|^2}{2+4|\psi_0(0)|^2}|\mu_j(0)|^2dx   \\
&+\int_{\R^N}\Big(\frac{\psi_0^2(0)}{4(1+2|\psi_0(0)|^2)}\overline{\mu}_j^2(0)
+\frac{\overline{\psi}_0^2(0)}{4(1+2|\psi_0(0)|^2)}\mu_j^2(0)\Big)dx.
\end{align*}
Choosing $m_j$ depending only on $a_0$ and $m_0$ such that
\begin{align}\label{cauchy27}
m_{j}^2\geq 2(2+4C_b^2m_0^2)CI_{j}(0) +1,
\end{align}
we derive
\begin{align}\label{cauchy28}
 \text{ for } j=1,\dots,N, \sup_{t\in [0,T] }\|\phi_{j}\|_{H^{s}(\R^N)} \leq m_{j}.
\end{align}
Treating now equation \eqref{cauchy12}, we introduce
$$\xi(0)=(1-\Delta)^{\frac{s}{2}} a_0$$
and
$$ CI_{0}(0)=\int_{\R^N}|\xi(0)|^2dx.$$
It is crucial to remark here that equation~\eqref{cauchy12} is not
quasi-linear. As a consequence, we can perform a classical energy
estimate on it and choose the constant $m_0$ such that
\begin{align}\label{cauchy29}
m_{0}^2\geq 2CI_0(0) +1.
\end{align}
The choice of $m_0$ depends only on the initial data $a_0$.
\begin{remark}
 If we work with equation \eqref{cauchy3} instead of equation \eqref{cauchy8} and perform the energy estimates of equation \eqref{cauchy15} for example,
we have to choose $m_0$ such that
$$m_0^2\geq 2(2+4C_b^2m_0^2)CI_{0}(0) +1$$
where
\begin{align*}
\overset{\sim}{CI}_0(0) &=\int_{\R^N}\frac{1+|\psi_0(0)|^2}{2+4|\psi_0(0)|^2}|\xi(0)|^2dx   \\
& +\int_{\R^N}
\Big(\frac{\psi_0^2(0)}{4(1+2|\psi_0(0)|^2)}\overline{\xi}^2(0)+\frac{\overline{\psi}_0^2(0)}{4(1+2|\psi_0(0)|^2)}\xi^2(0)\Big)dx.
\end{align*}
Such a choice requires of course a smallness assumption on the
initial data $a_0$.
\end{remark}
Let us take $m_0$ as in \eqref{cauchy29}. Then one can find also a
positive $T$ such that
\begin{align}\label{cauchy30}
\sup_{t\in [0,T] }\|\phi_{0}\|_{H^{s}(\R^N)} \leq m_{0}.
\end{align}
We refer to \cite{co} for the technical details. Due to the
structure of the space $\mathcal{X}_T$, it remains to estimate
$(\psi_0)_t$ in $H^{E(\frac{N}{2})+1}(\R^N)$. This is done
directly on equation \eqref{cauchy12} and provides that there
exists a constant $C_0(M)$ depending only on $M$ such that
\begin{align}\label{cauchy31}
\sup_{t\in [0,T] }\|(\phi_0)_t\|_{H^{E(\frac{N}{2})+1}(\R^N)} \leq
C_0(M).
\end{align}
As a conclusion, we choose constants $M$, $r$ and $T$ as follows.
We first fix $m_0$ depending only on $a_0$ such that
\eqref{cauchy29} holds. Then we take $(m_j)$, $m_{N+1}$ and
$m_{N+2}$ depending only on $a_0$ and $m_0$ satisfying
respectively \eqref{cauchy27}, \eqref{cauchy25} and
\eqref{cauchy23}. Finally take $r$ such that
$$
r\geq C_0(M),
$$
and $T$ sufficiently small such that
$$
C_4(M,r)T\leq \frac{1}{2},
$$
and similar conditions to take into account the equations on
$\phi_0$, $\phi_j$ and $\phi_{N+1}$. For such a choice of
parameter, we have showed
$$\mathcal{S}\Big( \mathcal{X}_T(M,r)\Big) \subset  \mathcal{X}_T(M,r).$$
The fact that the mapping $\mathcal{S}$ is a contraction for the
suitable norm is very standard and we refer once again to
\cite{co} since the proof reads exactly the same. By the
contraction mapping principle, there exists a unique solution
$$
\Phi=\Big(\phi_0,(\phi_j)_{j=0}^N,\phi_{N+1},\phi_{N+2}\Big)
$$
to system~\eqref{cauchy12}-\eqref{cauchy15}. Furthermore, for each
$0\leq j\leq N+2$, the function $\phi_j$ satisfies
$$
\phi_j\in L^\infty(0,T;H^s(\R^N))\cap C([0,T];L^2(\R^N)).
$$
To conclude the proof, we have to show that the solution $\Phi$
solves system~\eqref{cauchy8}-\eqref{cauchy11} and has the
following regularity
$$
\Phi\in \Big(L^\infty(0,T;H^{s+2}(\R^N))\cap
C([0,T];H^{s}(\R^N))\Big)^{N+3}.
$$
This can be done exactly as in \cite{co}. The proof of the
conservation laws \eqref{massc}-\eqref{energyc} is very standard
once we have proved that $\phi$ is regular and so we omit it. At
this point the proof of Theorem~\ref{cauchy} is completed.

\medskip

\section{Existence of ground states and orbital instability}\label{instability}

In this section we derive the existence, as well as some qualitative properties, of the
ground states solutions of~\eqref{gs1}. When $p > 3 + \frac{4}{N}$
we shall also prove that the ground states are instable by blow-up.
\medskip

We begin with the following Pohozaev-type identity.

\begin{lemma}\label{poho}
Any $u\in X_{\C}$ solution of~\eqref{gs1} satisfies $P(u)=0$ where
$P: X_{\C} \to \R$ is the function defined by
\begin{equation}
P(u) = \frac{N-2}{N}\Big( \frac{1}{2}\int_{\R^N}|\nabla u|^2dx
+\int_{\R^N}|u|^2|\nabla |u||^2dx\Big)
+\frac{\omega}{2}\int_{\R^N}|u|^2dx-\frac{1}{p+1}\int_{\R^N}|u|^{p+1}dx.
\nonumber
\end{equation}
\end{lemma}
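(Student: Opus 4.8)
The plan is to derive the Pohozaev identity via the standard scaling/dilation argument, adapted to the quasi-linear setting. The key point is that $u$ is a \emph{weak solution} in the sense of \eqref{weak-solution}, so I cannot directly differentiate; instead I test the equation against a suitable vector field acting on $u$.

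First I would recall that $u\in X_{\C}$ solves \eqref{gs1} weakly, which after taking real parts amounts to
\[
\Re\int_{\R^N}\Big(\nabla u\cdot\nabla\overline{\varphi}+\nabla(|u|^2)\cdot\nabla(u\overline{\varphi})+\omega u\overline{\varphi}-|u|^{p-1}u\overline{\varphi}\Big)\,dx=0
\]
for all $\varphi\in C_0^\infty(\R^N,\C)$. The natural test object is $\varphi=x\cdot\nabla u$, the infinitesimal generator of dilations; formally, plugging it in and integrating by parts produces each of the four integrals in $P(u)$ multiplied by its Pohozaev weight ($\tfrac{N-2}{2}$ for $\int|\nabla u|^2$, the analogous weight $\tfrac{N-2}{2}$ for the quartic term $\int|u|^2|\nabla|u||^2$ because it scales like a second-order term, $\tfrac N2$ for the mass term, and $-\tfrac N{p+1}$ for the $L^{p+1}$ term), and collecting these gives $\tfrac{N}{N-2}$ times... — more cleanly, one gets exactly $N\cdot P(u)/(N-2)\cdot(\text{const})=0$, i.e. $P(u)=0$ after normalization. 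Since $x\cdot\nabla u\notin C_0^\infty$ and $u$ may not have enough regularity, I would first regularize: by Theorem~\ref{ex}(iii)--(iv), any ground state is $C^2$ with exponential decay of derivatives up to order $2$, so $u$ and $\nabla u$ are classical and decay fast; then I approximate $x\cdot\nabla u$ by $\theta_R(x)\,x\cdot\nabla u$ where $\theta_R$ is a smooth cutoff equal to $1$ on $B(0,R)$, mollify to land in $C_0^\infty$, pass to the limit using the decay estimates to kill all boundary terms, and recover the identity. Alternatively, and perhaps more robustly, I would use the dual/semilinear reformulation invoked in the proof of Theorem~\ref{ex} (following \cite{cojean}): under the change of unknown the equation becomes a genuine semilinear elliptic equation to which the classical Pohozaev identity of Berestycki--Lions applies directly, and then one translates the resulting identity back.

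Concretely I would carry out the steps in this order: (1) justify that it suffices to prove the identity for real, positive, radial $u$ (Theorem~\ref{ex} reduces $\mathcal G_\omega$ to this case up to phase and translation, and $P$ is phase- and translation-invariant); (2) record the regularity and decay from Theorem~\ref{ex} so that all integrations by parts and all boundary terms at infinity are legitimate; (3) compute, for the dilation $u_\lambda(x):=u(x/\lambda)$, the quantity $\tfrac{d}{d\lambda}\big|_{\lambda=1}\mathcal E_\omega(u_\lambda)$ — here $\int|\nabla u_\lambda|^2=\lambda^{N-2}\int|\nabla u|^2$, $\int|\nabla|u_\lambda|^2|^2=\lambda^{N-2}\int|\nabla|u|^2|^2$ (same exponent, since $|\nabla|u|^2|^2$ contains two derivatives), $\int|u_\lambda|^2=\lambda^N\int|u|^2$, $\int|u_\lambda|^{p+1}=\lambda^N\int|u|^{p+1}$; (4) observe that because $u$ is a critical point of $\mathcal E_\omega$ along this one-parameter family (this uses the weak-solution property tested against $x\cdot\nabla u$, made rigorous by the cutoff/mollification of step (2)), the derivative vanishes, giving
\[
\frac{N-2}{2}\int_{\R^N}|\nabla u|^2dx+\frac{N-2}{2}\int_{\R^N}|\nabla|u|^2|^2\cdot\frac14\,dx\cdot?
\]
— wait, being careful with the $\tfrac14$ normalization in $\mathcal E_\omega$ versus the $\int|u|^2|\nabla|u||^2$ form of $P$ — and after matching constants one obtains precisely $P(u)=0$ with the stated coefficients.

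The main obstacle will be step (2)/(4): rigorously justifying the use of $x\cdot\nabla u$ as a test function for a \emph{quasi-linear} equation whose weak formulation only pairs against $C_0^\infty$ functions. The quartic term $\nabla(|u|^2)\cdot\nabla(u\overline\varphi)$ is delicate — when $\varphi=x\cdot\nabla u$ it involves second derivatives of $u$ paired with second derivatives, so one genuinely needs the $C^2$ regularity and the exponential decay of $D^\alpha u$, $|\alpha|\le 2$, from Theorem~\ref{ex} to control the cutoff error terms (of the form $\int_{R\le|x|\le 2R} |x||\nabla\theta_R|\,|\nabla u|^2 \lesssim R\cdot R^{-1}\cdot e^{-cR}\to0$, and similarly for the quartic term) and to pass to the limit. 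I expect the cleanest write-up routes around this entirely by citing the dual-formulation Pohozaev identity as in the proof of Theorem~\ref{ex}; I would present the dilation computation as the conceptual heart and relegate the regularization/decay bookkeeping to a remark that it follows from Theorem~\ref{ex}.
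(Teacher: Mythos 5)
Your conceptual route --- test the weak formulation against the dilation field $x\cdot\nabla u$ after a cutoff and a mollification, then match the scaling exponents --- is in spirit the same as the paper's, which convolves the equation with mollifiers $\rho_n$, multiplies by $\psi_j\, x\cdot\nabla(\overline u*\rho_n)$ with $\psi_j$ a cutoff, integrates by parts and concludes by dominated convergence (following \cite{co2}). However, your justification contains a genuine gap: in steps (1)--(2) you reduce to real, positive, radial ground states and you borrow the $C^2$ regularity and the exponential decay of $D^\alpha u$ from Theorem~\ref{ex}(iii)--(iv). This is circular: Lemma~\ref{poho} is an ingredient of the proof of Theorem~\ref{ex} itself (it is used in Step II to obtain \eqref{poho1} and to compare $P(|u|)$ with $\tilde P(v)$ for an \emph{arbitrary} solution), so none of the conclusions of Theorem~\ref{ex} can be invoked here. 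Moreover the statement concerns \emph{any} $u\in X_\C$ solving \eqref{gs1} --- complex-valued, not necessarily a ground state, with no a priori positivity, symmetry, classical regularity or decay --- and it is precisely in that generality that the lemma is applied later; proving it only for ground states would not suffice. The same objection hits your fallback via the dual reformulation: the change of unknown $v=r^{-1}(u)$ is defined for real-valued $u$ only, so it cannot cover general complex solutions.

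The repair, which is what the paper does, is to use only regularity valid for every solution: by \cite[Section 6, Appendix]{LiWaWa2} any $u\in X_\C$ solving \eqref{gs1} belongs to $L^\infty_{\rm loc}(\R^N)$; one then mollifies the equation with $\rho_n$, tests with $\psi_j\, x\cdot\nabla(\overline u*\rho_n)$ where $\psi_j(x)=\psi(x/j)$, performs the integrations by parts, and passes to the limit in $n$ and $j$ by dominated convergence. Note also that no decay at infinity is needed to kill the cutoff errors: on the support of $\nabla\psi_j$ one has $|x|\,|\nabla\psi_j|\leq C$ uniformly in $j$, so the error terms are dominated by $|\nabla u|^2$, $|u|^2|\nabla|u||^2$, $|u|^2$ and $|u|^{p+1}$ integrated over the annulus $j\leq|x|\leq 2j$, which tend to zero simply because $u\in X_\C$ (whence $u\in L^{p+1}(\R^N)$ by Sobolev embedding and interpolation); your claim that the exponential decay of $D^\alpha u$ is genuinely needed is thus both unavailable at this stage and unnecessary. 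Finally, the constant-matching in your step (4) is left unfinished in the write-up (the displayed derivative is garbled); the correct outcome is $\frac{N-2}{2}\int|\nabla u|^2+\frac{N-2}{4}\int|\nabla|u|^2|^2+\frac{N\omega}{2}\int|u|^2-\frac{N}{p+1}\int|u|^{p+1}=0$, which upon dividing by $N$ and using $\frac14\int|\nabla|u|^2|^2=\int|u|^2|\nabla|u||^2$ is exactly $P(u)=0$.
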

\begin{proof}
Since the proof only uses classical arguments, we shall just
sketch it and refer to~\cite{co2} for further details. Let $u \in
X_{\C}$ be a solution to equation~\eqref{gs1}. From~\cite[Section
6. Appendix]{LiWaWa2} we learn that $u\in L^\infty_{{\rm
loc}}(\R^N)$ (the proof given there extend easily to complex
valued functions). We are then able to pursue as
in~\cite[Proposition 2.1]{co2}. Let $\psi \in C_0^{\infty}(\R^N)$
be such that $\psi \geq 0$, $\text{supp}(\psi) \subset B(0,2)$ and
$\psi \equiv 1$ on $B(0,1)$. For all $j\in \N^*$, we set
$\psi_j(x)=\psi(\frac{x}{j})$.  Now let $(\rho_n)_{n\in \N}$ be a
sequence of even positive functions in $L^1(\R^N)$ with
$\int_{\R^N}\rho_n dx=1$ such that, for all $\kappa \in
L^q(\R^N)$, $\rho_n*\kappa$ tends to $\kappa$ in $L^q(\R^N)$, as
$n\to\infty$, for all $1\leq q <\infty$. First, we take the
convolution of~\eqref{gs1} with $\rho_n$. Then, we multiply the
resulting equation by $\psi_j \hspace{2mm}x\cdot \nabla
(\overline{
  u} *\rho_n)$, integrate over $\R^N$ and consider the
real part of the equality. From that point, the calculus
are standard consisting in various integrations by parts. Hence, we omit the
details and we refer the reader to~\cite{co2}. In order to conclude the
proof, it is sufficient to apply the Lebesgue dominated convergence
theorem.
\end{proof}

\vskip3pt \noindent {\it Proof of Theorem~\ref{ex}.} We shall distinguish
between the cases $N=1$ and $N \geq 2$,  which require a separate treatment.
\vskip2pt \noindent $\bullet$ Case $N \geq 2$. We divide the proof into four
steps. \vskip2pt \noindent {\bf Step I (existence of a solution
to~\eqref{gs1}).} We prove the existence of a ground state solution $u_{\omega}
\in X_{\C}$ to~\eqref{gs1} satisfying conditions {\rm i)-iv)} of
Theorem~\ref{ex}. Following the arguments of~\cite{cojean}, we perform a change
of unknown by setting $v=r^{-1}(u)$, where the function $r:\R\to\R$ is the
unique solution to the Cauchy problem
\begin{equation}
    \label{changevariable}
r'(s)=\frac{1}{\sqrt{1+2r^2(s)}},\qquad r(0)=0.
\end{equation}
Here $u \in X_{\C}$ is assumed to be real valued. Then, in~\cite{cojean} it
is proved that, if  $v \in H^1(\R^N) \cap C^2(\R^N)$ is a real
solution to
\begin{eqnarray}
    \label{exis1}
-\Delta v= \frac{1}{\sqrt{1+2r^2(v)}}\Big( |r(v)|^{p-1}r(v)-\omega
r(v)\Big),
\end{eqnarray}
then $u = r(v) \in X_{\C} \cap C^2(\R^N)$ and it is a real solution
of~\eqref{gs1}.  Let us set
$$
k(v):= \frac{1}{\sqrt{1+2r^2(v)}}\Big( |r(v)|^{p-1}r(v)-\omega r(v)\Big)=r'(v)\Big( |r(v)|^{p-1}r(v)-\omega r(v)\Big),
$$
and denote by ${\mathcal T}_{\omega} : H^1(\R^N) \to \R$ the
action associated with equation~\eqref{exis1}, namely
\begin{align*}
{\mathcal T}_{\omega}(v)&=\frac{1}{2} \int_{\R^N} |\nabla v|^2dx -\int_{\R^N} K(v)dx,\\
&=\frac{1}{2} \int_{\R^N} |\nabla v|^2dx -\frac{1}{p+1}\int_{\R^N}
|r(v)|^{p+1}dx+\frac{\omega}{2}\int_{\R^N} |r(v)|^2dx,
\end{align*}
where we have set $K(t)=\int_0^t k(s)ds$. Now, it is straightforward to check
that $k$ satisfies assumptions (g0)-(g3) of~\cite{cojean}. Thus,
from~\cite{cojean} (see also~\cite{bere1,bere2}) we deduce the existence of a
ground state $v_\omega$ of~\eqref{exis1} satisfying conditions {\rm i)-iv)} of
Theorem~\ref{ex}, that is $v_\omega$ solves~\eqref{exis1} and minimizes the
action ${\mathcal T}_{\omega}$ among all nontrivial solutions to~\eqref{exis1}.
Therefore, setting $u_{\omega} = r(v_\omega)$, we get that $u_\omega$
solves~\eqref{gs1} and satisfies conditions {\rm i)-iv)} of Theorem~\ref{ex}
(see~\cite[Theorem 1.2]{cojean}). \vskip4pt \noindent {\bf Step II (existence
of a ground state to~\eqref{gs1}).} In this step we prove that $u_\omega$
minimizes the action ${\mathcal E}_{\omega}$, over the set of nontrivial
solutions to the original equation~\eqref{gs1}. To achieve this goal, we make
the following observations. Notice first that, if $u=r(v)$ with $u \in X_{\C}$
real, then ${\mathcal E}_{\omega}(u)= {\mathcal T}_{\omega}(v)$. Indeed, we
have
\begin{align*}
{\mathcal E}_{\omega}(r(v))&=
\frac{1}{2}\int_{\R^N}r^{'2}(v)|\nabla
v|^2dx+\int_{\R^N}|r(v)|^2r^{'2}(v)|\nabla |v||^2dx
-\frac{1}{p+1}\int_{\R^N}|r(v)|^{p+1}dx\\ &+\frac{\omega}{2}\int_{\R^N} |r(v)|^2dx\\
&= \frac{1}{2}\int_{\R^N} \frac{1}{1+2r^2(v)}|\nabla
v|^2dx+\int_{\R^N}\frac{1}{1+2r^2(v)}r(v)^2|\nabla v|^2dx
-\frac{1}{p+1}\int_{\R^N}|r(v)|^{p+1}dx\\ &+\frac{\omega}{2}\int_{\R^N} |r(v)|^2dx\\
&= \frac{1}{2}\int_{\R^N}|\nabla v|^2dx
-\frac{1}{p+1}\int_{\R^N}|r(v)|^{p+1}dx+\frac{\omega}{2}\int_{\R^N}
|r(v)|^2dx\\ &= {\mathcal T}_{\omega}(v),
\end{align*}
thanks to the Cauchy problem~\eqref{changevariable}. Also, if $u \in X_{\C}$ is a
solution to~\eqref{gs1} we have, in light of Lemma~\ref{poho}, that
\begin{equation}\label{poho1}
{\mathcal E}_{\omega}(u) = \frac{1}{N}\int_{\R^N}|\nabla u|^2 + 2
|u|^2 |\nabla|u||^2 dx.
\end{equation}
Once these facts have been observed, take any solution $u \in
X_{\C}$ to~\eqref{gs1} (notice that $u$ can be a complex valued
function) and set $v=r^{-1}(|u|)$. Due to the well-known
point-wise inequality $|\nabla |u(x)||\leq |\nabla u(x)|$ for
a.e.\ $x\in\R^N$, it holds
\begin{equation}\label{semi}
\int_{\R^N} |\nabla |u(x)||^2dx\leq \int_{\R^N} |\nabla u(x)|^2dx,
\end{equation}
so that ${\mathcal E}_{\omega}(|u|) \leq {\mathcal E}_{\omega}(u)$
(notice that all the other terms in the functional ${\mathcal
E}_{\omega}$ are invariant to the modulus). Thus, in turn, we have
\begin{equation}
    \label{control}
{\mathcal E}_{\omega}(u) \geq {\mathcal E}_{\omega}(|u|) =
{\mathcal E}_{\omega}(r(v))= {\mathcal T}_{\omega}(v).
\end{equation}
Now, let us set
$$
A = \big\{v\in H^1(\R^N): \tilde{P}(v) =0 \big\},
$$
where $\tilde{P}: H^1(\R^N) \to \R$ is the functional defined as
$$
\tilde{P}(v) = (N-2) \int_{\R^N}|\nabla v|^2 dx - 2N\int_{\R^N}K(v) dx.
$$
Clearly, for any $v \in A$, we have
\begin{equation}\label{poho2}
{\mathcal T}_{\omega}(v)= \frac{1}{N}\int_{\R^N}|\nabla v|^2dx.
\end{equation}
Also, as for the proof that ${\mathcal E}_{\omega}(u)= {\mathcal
T}_{\omega}(v)$,  it is readily checked that, if $v= r^{-1}(u)$ with
$u \in X_{\C}$ real, then $\tilde{P}(v) = P(u)$. Finally, it is well
known (see e.g.~\cite{bere1,bere2}) that $v_\omega$ satisfies
\begin{equation}\label{exis2}
v_\omega \in A,\qquad
 {\mathcal T}_{\omega}(v_\omega)=\inf_{v\in
A}{\mathcal T}_{\omega}(v).
\end{equation}
Now, if $N=2$, it follows from the definition of $P$ in Lemma~\ref{poho}
that $P(|u|)=0$. Thus, in turn, $\tilde{P}(v) =0$ and, using~\eqref{control}
and~\eqref{exis2}, it follows that
\begin{equation}\label{good}
{\mathcal E}_{\omega}(u)\geq {\mathcal T}_{\omega}(v)\geq
{\mathcal T}_{\omega}(v_\omega)= {\mathcal E}_{\omega}(u_\omega),
\end{equation}
proving the desired claim.
If $N \geq 3$, one of the following possibilities occurs.
\begin{itemize}
\item[i)] $P(|u|) =0$. In this case inequality~\eqref{good} holds exactly as in the case $N=2$.
\item[ii)] $P(|u|) = \tilde{P}(v) < 0$. In this case there exists a number $\theta \in
(0,1)$ such that, setting $v_{\theta}(x)= v(x/\theta)$, we have
$\tilde{P}(v_{\theta})= 0$. Now, since $v_{\theta} \in A$,
using~\eqref{poho1},~\eqref{semi},~\eqref{poho2},~\eqref{exis2},
it follows that
\begin{align*}
{\mathcal T}_{\omega}(v_{\theta})&= \frac{1}{N}\int_{\R^N} |\nabla
v_{\theta}|^2 dx  = \frac{\theta^{N-2}}{N}\int_{\R^N}|\nabla v|^2
dx\\
&= \frac{\theta^{N-2}}{N} \int_{\R^N}|\nabla |u||^2 + 2 |u|^2
|\nabla |u||^2 dx \\
& \leq \frac{\theta^{N-2}}{N} \int_{\R^N}|\nabla u|^2 + 2 |u|^2
|\nabla |u||^2 dx \\
& = \frac{\theta^{N-2}}{N} N {\mathcal E}_{\omega}(u) =
\theta^{N-2}{\mathcal E}_{\omega}(u) < {\mathcal E}_{\omega}(u).
\end{align*}
Thus, we get
$$
{\mathcal E}_{\omega}(u) > {\mathcal T}_{\omega}(v_{\theta})
\geq {\mathcal T}_{\omega}(v_{\omega}) = {\mathcal
E}_{\omega}(u_{\omega}).
$$
\end{itemize}
Then, in conclusion, we proved that for both the cases $N=2$ and
$N\geq 3$, $u_{\omega} \in X_{\C}$ indeed minimizes the action
${\mathcal E}_\omega$ over the set of nontrivial solutions
to~\eqref{gs1}.
\vskip4pt
\noindent {\bf Step III (real character
of solutions).} First we prove that, if $u \in X_{\C}$ is a ground
state solution to~\eqref{gs1}, then $|u| \in X$ is also a ground
state. We set $v= r^{-1}(|u|)$. Observe that it holds
\begin{equation}
    \label{ok}
m_{\omega}= {\mathcal E}_{\omega}(u) \geq {\mathcal E}_{\omega}(|u|)\geq {\mathcal T}_{\omega}(v).
\end{equation}
In the case $N=2$, we have $\tilde{P}(v) = P(|u|) = 0$ and,
thus, we conclude ${\mathcal E}_{\omega}(|u|)=m_{\omega}$ by using~\eqref{exis2},~\eqref{ok} and recalling
that ${\mathcal T}_{\omega}(v_{\omega}) = {\mathcal
E}_{\omega}(u_{\omega})= m_{\omega}$. If $N \geq 3$,
 and $\tilde{P}(v) = P(|u|) <0$ we introduce, as before, the rescaling $v_{\theta}$
such that $\tilde{P}(v_{\theta}) = 0$. Then, we get
$$
{\mathcal T}_{\omega}(v_{\theta}) < {\mathcal E}_{\omega}(u)= m_{\omega},
$$
and we immediately reach a contradiction by arguing as before.
Now, let $u \in X_{\C}$ be a ground state solution of~\eqref{gs1}
and assume that
$$
{\mathcal L}^N(\{x\in\R^N:|\nabla |u|(x)|<|\nabla u(x)|\})>0.
$$
Then we get
\begin{align*}
m_{\omega} & = \frac{1}{2}\int_{\R^N} |\nabla |u||^2 dx +
\int_{\R^N}|u|^2 |\nabla |u||^2 dx +\frac{\omega}{2}\int_{\R^N}|u|^2dx- \frac{1}{p+1}\int_{\R^N} |u|^{p+1}dx \\
&< \frac{1}{2}\int_{\R^N} |\nabla u|^2 dx +  \int_{\R^N} |u|^2
|\nabla |u||^2 dx +\frac{\omega}{2}\int_{\R^N}|u|^2dx- \frac{1}{p+1}\int_{\R^N} |u|^{p+1}dx=
m_{\omega}.
\end{align*}
This is obviously not possible and, hence, we have $|\nabla
|u(x)||=|\nabla u(x)|$, for a.e.\ $x\in\R^N$. But this is true if,
and only if, $\Re\, u\nabla (\Im\, u)=\Im\, u\nabla(\Re\, u)$. Whence,
if this last condition holds, we get
$$
{\bar u}\nabla u=\Re\, u\nabla (\Re\, u)+ \Im\, u\nabla (\Im\,
u),\quad \text{a.e.\ in $\R^N$},
$$
which implies that $\Re\,(i\bar u(x)\nabla u(x))=0$ a.e.\ in
$\R^N$. This last identity immediately gives the existence of
$\theta\in {\mathbb S}^1$ such that $u(x)=e^{i\theta}|u(x)|$.
\vskip4pt \noindent {\bf Step IV (properties i)-iv) for any real
non negative ground state).} In light of some recent
achievements~\cite{Ma,BJM}, we can prove that any real ground
state solution to~\eqref{gs1} is radially symmetric and radially
decreasing about some point. In fact we observe first that for any
given solution $u$ of~\eqref{gs1}, by~\cite[Section 6.
Appendix]{LiWaWa2}, $u\in L^\infty_{{\rm loc}}(\R^N)$ and in turn
$u\in C^2(\R^N)$ (cf.~\cite{LU}). Considering now the strictly
increasing function $\mu:\R\to\R$ such that
\begin{equation}
    \label{changevariable-anti}
\mu'(s)=\sqrt{1+2s^2},\qquad \mu(0)=0,
\end{equation}
it is easy to see that $v=\mu(u)$ is a solution of~\eqref{exis1}.
Notice that $\mu$ is precisely the inverse function of the
function $r$ introduced in Step II, $r\circ \mu=\mu\circ r={\rm
Id}$. Furthermore, we claim that if $u$ is any given ground state
of~\eqref{gs1}, then $v=\mu(u)=r^{-1}(u)$ is a ground state
of~\eqref{exis1}. In fact, taking into account the computations in
Step II of the proof, for any nontrivial solution $w$
of~\eqref{exis1}, $r(w)$ is a (nontrivial) solution
of~\eqref{gs1}, and we have
$$
{\mathcal T}_\omega(w)={\mathcal E}_\omega(r(w))\geq
m_\omega={\mathcal E}_\omega(u) ={\mathcal
E}_\omega(r(v))={\mathcal T}_\omega(v),
$$
which yields the desired conclusion. At this point the fact that any ground
state solution is radially symmetric and radially decreasing about some point
is a consequence of the results of~\cite{BJM} (see also~\cite{MSJJ}) applied to
equation~\eqref{exis1}. Here let us point out that the radial symmetry (plus
radial decrease) could have also been proved by arguing directly on
equation~\eqref{gs1} which, in fact, satisfies a scaling property being the
essence of the results of~\cite{BJM}. Now let $u \in {\mathcal G_{\omega}}$ be
such that $u\geq 0$ in $\R^N$. Since $u \in C^2(\R^N)$  we have by the maximum
principle (applies to $v=\mu(u)$ ) that $u >0$ on $\R^N$. Finally
using~\cite[Lemma 2]{bere2} on equation~\eqref{exis1} we immediately derive the
exponential decays indicated in the statement of Theorem~\ref{ex}. \vskip5pt
\noindent $\bullet$ Case $N=1$. \vskip2pt \noindent By taking advantage of the
transformation of problem~\eqref{gs1}, via the dual approach, into the
semi-linear equation~\eqref{exis1}, we know that equation~\eqref{gs1} admits a
unique positive and even solution (see ~\cite[Theorem 5, Remark 6.3]{bere2}).
Thus it just remains to prove that any solution $u$ of~\eqref{gs1} is of the
form $u=e^{i \theta}\phi$, where $\theta \in \R$ and $\phi
>0$ is a solution to~\eqref{gs1}. In
fact $|u|>0$, otherwise we would get a contradiction with the
identity
$$
\frac{1}{2}|u'|^2+\frac{1}{4}|(|u|^2)'|^2-\frac{\omega}{2}|u|^2+\frac{1}{p+1}|u|^{p+1}=0.
$$
This identity is obtained multiplying~\eqref{gs1} by the conjugate
of $u'$ and by performing standard manipulations. Then, we can
write down the solution in polar form, $u=\rho e^{{\rm i}\theta}$,
where $\rho,\theta\in C^2(\R)$. By direct computation, it holds
$u''=\big[\rho\theta''+2\rho'\theta'\big]e^{{\rm i}\theta}{\rm i}+
\big[\rho''-\rho(\theta')^2\big]e^{{\rm i}\theta}$. Then, by
dropping this formula into equation~\eqref{gs1}, exactly as
in~\cite[proof of Theorem 8.1.7{\rm (iii)}]{Ca}, one immediately
reaches (by comparison of real and imaginary parts) the following
identity
\begin{equation}
    \label{polarequl}
\rho\theta''+2\rho'\theta'=0,
\end{equation}
namely $\theta'=\frac{K}{\rho^2}$, for some $K\geq 0$. At this point it is
sufficient to follow the argument of~\cite[proof of Theorem 8.1.7{\rm
(iii)}]{Ca} to prove that $K=0$ and get the desired property. Thus, when $N=1$,
Theorem~\ref{ex} holds true and the set of solutions of~\eqref{gs1} is
essentially unique. $\hfill\Box$

\bigskip
\noindent In the rest of this section we prove the instability
result, Theorem~\ref{instab}. We start with two preliminary
results. We define the variance $\V(t)$, by
\begin{equation}\label{eq.variance}
  \V(t)=\int_{\R^N} |x|^2|\phi(t,x)|^2\,dx,\quad t\in[0,\infty)
\end{equation}
and derive a so-called virial identity in the following lemma.

\begin{lemma}\label{lem.variance}
  Let $\phi$ be a solution of~\eqref{eq.schr1} on an interval $I=(-t_1,t_1)$. Then,
  \begin{equation}
    \V''(t)=8Q(\phi(t)),\quad t\in I,
    \label{eq.Vsecondo}
  \end{equation}
where we have set
\begin{equation}
    \label{defQ}
Q(\phi)=\int_{\R^N}|\nabla\phi|^2dx+(N+2)\int_{\R^N}
|\phi|^2|\nabla |\phi||^2dx
    -\frac{N(p-1)}{2(p+1)}\int_{\R^N} |\phi|^{p+1}dx,
\end{equation}
for all $\phi\in X_{\C}$.
\end{lemma}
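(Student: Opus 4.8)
The plan is to establish the virial identity \eqref{eq.Vsecondo} by direct differentiation of the variance \eqref{eq.variance}, using the equation \eqref{eq.schr1} and repeated integrations by parts. I would work at the level of the regular solutions provided by Theorem~\ref{cauchy}, so that all the manipulations below are justified and the identity then extends by density/continuity to the solutions under consideration.

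\textbf{Step 1: first derivative of the variance.} Differentiating $\V(t)=\int_{\R^N}|x|^2|\phi|^2\,dx$ in $t$ gives $\V'(t)=\int_{\R^N}|x|^2\,\partial_t(|\phi|^2)\,dx = 2\Re\int_{\R^N}|x|^2\,\overline{\phi}\,\phi_t\,dx$. Substituting $\phi_t$ from \eqref{eq.schr1}, i.e. $\iu\phi_t = -\lapl\phi-\phi\lapl|\phi|^2-|\phi|^{p-1}\phi$, the potential term $|\phi|^{p-1}\phi$ contributes a real multiple of $\overline{\phi}\phi$ and hence drops out after taking $\Re(\iu\cdot)$; what survives is a quantity of the form $\V'(t)= 4\,\Im\int_{\R^N}\overline{\phi}\,\nabla\phi\cdot x\,dx$ plus an analogous contribution from the quasi-linear term $\phi\lapl|\phi|^2$, which after one integration by parts I expect to combine into the standard current expression. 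The cleanest bookkeeping is to write everything in terms of the momentum density $\Im(\overline{\phi}\nabla\phi)$ and the density $|\phi|^2$ and keep track of the extra $\nabla|\phi|^2$ factors.

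\textbf{Step 2: second derivative.} Differentiating $\V'(t)$ once more and again inserting \eqref{eq.schr1} to eliminate $\phi_t$ produces $\V''(t)$ as a sum of bulk integrals. Here I would use the standard identities for the semilinear Schrödinger functional, which give the contribution $8\int|\nabla\phi|^2 - \frac{4N(p-1)}{p+1}\int|\phi|^{p+1}$ from the terms $-\lapl\phi$ and $-|\phi|^{p-1}\phi$ (this is the classical Glassey computation, see~\cite{Ca}), and then treat separately the extra term coming from the quasi-linear nonlinearity $-\phi\lapl|\phi|^2$. That quasi-linear piece, after the dust settles, should yield precisely $8(N+2)\int_{\R^N}|\phi|^2|\nabla|\phi||^2\,dx$; combining with the semilinear part and recalling $\frac{4N(p-1)}{p+1} = 8\cdot\frac{N(p-1)}{2(p+1)}$ gives $\V''(t)=8Q(\phi(t))$ with $Q$ as in \eqref{defQ}.

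\textbf{Main obstacle.} The routine-but-delicate part is the quasi-linear term: one has to differentiate $\int|x|^2\,\partial_t(|\phi|^2)$ twice, feed in $\phi\lapl|\phi|^2$, and perform several integrations by parts involving the weight $|x|^2$ (whose gradient and Laplacian are $2x$ and $2N$), taking care that no boundary terms arise — this is where the localization/regularity from Theorem~\ref{cauchy} and the decay built into $X_\C$ are used, exactly as in the Pohozaev computation of Lemma~\ref{poho}. The bookkeeping of the numerical coefficient $(N+2)$ in front of $\int|\phi|^2|\nabla|\phi||^2$ is the one place where a sign or factor error is easy to make; I would double-check it by testing on the scaling $\phi_\lambda(x)=\phi(x/\lambda)$, under which $Q$ is, up to a positive constant, the derivative of $\lambda\mapsto\mathcal E(\phi_\lambda)$ at $\lambda=1$, consistent with the standing-wave/instability mechanism invoked in Theorem~\ref{instab}. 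Once the identity is verified for smooth, well-localized data, a standard approximation argument (using the conservation laws \eqref{massc}, \eqref{energyc} and continuity of $\phi(t)$ in $H^1$) propagates it to the solutions in the statement.
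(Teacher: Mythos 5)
Your route is the same as the paper's (differentiate the variance, insert the equation, integrate by parts; the paper organizes the second step by pairing \eqref{eq.schr1} with $2x\cdot\nabla\overline\phi$ and taking real parts, which is equivalent bookkeeping), and the coefficients you assert are in fact the right ones. The genuine gap is that the only nontrivial content of the lemma --- the exact contribution of the quasi-linear term --- is never computed: ``after the dust settles ... should yield precisely $8(N+2)\int|\phi|^2|\nabla|\phi||^2$'' is an assertion, not a proof, and the factor $N+2$ is precisely where the work lies. Concretely, writing $w=|\phi|^2$ and $\tfrac14\V'(t)=\Im\int_{\R^N}(x\cdot\nabla\phi)\overline\phi\,dx$, when you differentiate in $t$ the quasi-linear part of $\phi_t$, namely $\iu\phi\lapl w$, enters \emph{twice}, through $\Im\int(x\cdot\nabla\phi_t)\overline\phi\,dx$ and through $\Im\int(x\cdot\nabla\phi)\overline{\phi}_t\,dx$; the first-order pieces cancel and the net contribution to $\tfrac14\V''$ is $\int_{\R^N} w\,x\cdot\nabla\lapl w\,dx=N\int|\nabla w|^2dx-\tfrac{N-2}{2}\int|\nabla w|^2dx=\tfrac{N+2}{2}\int|\nabla w|^2dx=2(N+2)\int|\phi|^2|\nabla|\phi||^2dx$, which is what produces the $8(N+2)$. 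In the paper's organization the same coefficient arises as the sum $4N+2(2-N)$ of two contributions of opposite character (one from re-substituting the equation into the time-derivative term, one from $-2\Re\int(x\cdot\nabla\overline\phi)\phi\lapl|\phi|^2\,dx=-\int(x\cdot\nabla w)\lapl w\,dx$, the latter being \emph{negative} for $N>2$). A plan that adds ``one extra quasi-linear integral'' to Glassey's identity without tracking both occurrences is exactly where the factor gets lost, so this step must be written out.

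Two further inaccuracies would undermine the self-check you propose. First, in Step 1 there is no ``analogous contribution from the quasi-linear term'' to $\V'$: since $\overline\phi\cdot\phi\lapl|\phi|^2=|\phi|^2\lapl|\phi|^2$ is real, it drops out for exactly the same reason as the power term, and one gets cleanly $\V'(t)=4\Im\int_{\R^N}(x\cdot\nabla\phi)\overline\phi\,dx$, i.e.\ \eqref{eq.Vprimo}. Second, your scaling test uses the wrong scaling: under $\phi_\lambda(x)=\phi(x/\lambda)$ one has $\frac{d}{d\lambda}{\mathcal E}(\phi_\lambda)\big|_{\lambda=1}=\frac{N-2}{2}\int|\nabla\phi|^2dx+(N-2)\int|\phi|^2|\nabla|\phi||^2dx-\frac{N}{p+1}\int|\phi|^{p+1}dx$, which is (up to the mass term) the Pohozaev functional of Lemma~\ref{poho}, \emph{not} $Q$; the identity relating $Q$ to a derivative of the energy is the one for the $L^2$-invariant rescaling $\phi^\sigma(x)=\sigma^{N/2}\phi(\sigma x)$, as in Lemma~\ref{rescprop}, item (5). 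So the cross-check as stated would ``confirm'' a different functional and could not catch an error in the $(N+2)$ coefficient. The regularity/approximation remarks (working with the solutions of Theorem~\ref{cauchy} so that boundary terms vanish) are fine and consistent with the paper.
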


\begin{proof}
We introduce the following notations:
\begin{equation*}
  z=  (z^1,\dots,z^n)\in\C^N;
  \quad
  z\cdot w=  \sum_{i=1}^N z^iw^i,\qquad z,w\in\C^N;
\end{equation*}
\begin{equation*}
  \phi_i=  \frac{\partial \phi}{\partial x_i},
  \qquad \phi:\R^N\to\C.
\end{equation*}
Let us first prove that
\begin{equation}
    \label{eq.Vprimo}
    \V'(t)=  4\Im\int_{\R^N}\bigl(x\cdot
    \nabla\phi\bigr)
    \overline{\phi}\,dx,\quad t\in I.
  \end{equation}
By multiplying equation~\eqref{eq.schr1} by $2\overline\phi$ and
taking the imaginary parts, yields
\begin{equation}
  \frac{\partial}{\partial t}|\phi|^2=
  -2\Im(\overline\phi\Delta\phi)=
  -2\nabla\cdot(\Im\overline\phi\nabla\phi),
  \label{eq.1}
\end{equation}
Now, multiplying \eqref{eq.1} by $|x|^2$, and integrating by parts
in space, we get~\eqref{eq.Vprimo}. In order to
prove~\eqref{eq.Vsecondo}, let us multiply
equation~\eqref{eq.schr1} by $2x\cdot\nabla\overline\phi$,
integrate in space on $\R^N$ and, finally, take the real parts,
yielding
\begin{align*}
  0 &=
  2\Re\int_{\R^N} \iu (x\cdot\nabla\overline\phi)\phi_t\,dx
  +2\Re\int_{\R^N} (x\cdot\nabla\overline\phi)\Delta\phi\,dx  \\
&+2\Re\int_{\R^N}
(x\cdot\nabla\overline\phi)\phi\Delta|\phi|^2\,dx
   +2\Re\int_{\R^N}(x\cdot\nabla\overline\phi)
  |\phi|^{p-1}\phi\,dx.
\end{align*}
We rewrite the last identity in the form
\begin{equation}\label{eq.eq}
  \hbox{I}=\hbox{II}+\hbox{III},
\end{equation}
where
\begin{align*}
  \hbox{I}= & 2\Re\int_{\R^N} \iu (x\cdot\nabla\overline\phi)\phi_t\,dx,
  \\
  \hbox{II}= & -2\Re\int_{\R^N} (x\cdot\nabla\overline\phi)\Delta\phi\,dx-2\Re\int_{\R^N} (x\cdot\nabla\overline\phi)\phi\Delta|\phi|^2\,dx,
  \\
  \hbox{III}= & -2\Re\int_{\R^N}(x\cdot\nabla\overline\phi)|\phi|^{p-1}\phi\,dx.
\end{align*}
For the first term, recalling formula~\eqref{eq.Vprimo} for $\V'$,
we have
\begin{align}
    \label{eq.Afin}
   \hbox{I} & =
  \Re\int_{\R^N} \iu\sum_{j=1}^N\left(x^j\overline\phi_j\phi_t
  -x^j\phi_j\overline\phi_t\right)\,dx=
  \Re\int_{\R^N} \iu\sum_{j=1}^N x^j\left[(\overline\phi_j\phi)_t-
  (\phi\overline\phi_t)_j\right]\,dx     \notag\\
   & =   \frac{d}{dt}\Re\int_{\R^N} \iu
  (x\cdot\nabla\overline\phi)
  \phi\,dx+N\Re\int_{\R^N} \iu \phi\overline\phi_t\,dx \\
  & =\frac{d}{dt}\Im\int_{\R^N}
  (x\cdot\nabla\phi)
  \overline\phi\,dx
  -N\int_{\R^N}|\nabla\phi|^2\,dx  \notag\\
&+N\int_{\R^N}|\phi|^2\Delta |\phi|^2\,dx
  +N\int_{\R^N}|\phi|^{p+1}\,dx.  \notag  \\
& =\frac{1}{4}\frac{d}{dt}\V(t)
  -N\int_{\R^N}|\nabla\phi|^2\,dx
-4N\int_{\R^N}|\phi|^2|\nabla |\phi||^2\,dx
  +N\int_{\R^N}|\phi|^{p+1}\,dx.  \notag
\end{align}
A multiple integration by parts in formula $\hbox{II}$ gives
\begin{equation}\label{eq.Bfin}
  \hbox{II}=
(2-N)\int_{\R^N}|\nabla\phi|^2dx+2(2-N)\int_{\R^N}|\phi|^2|\nabla
|\phi||^2 dx.
\end{equation}
As for the term $\hbox{III}$, we write it by components
\begin{align}\label{eq.C2}
  \hbox{III} & =  -\sum_{j=1}^N\int_{\R^N} x^j |\phi|^{p-1}
  (2\Re\overline\phi_j\phi)\,dx  \\
&=-2\sum_{j=1}^N\int_{\R^N} x^j\frac{\partial_j
|\phi|^{p+1}}{p+1}dx= \frac{2N}{p+1}\int_{\R^N}|\phi|^{p+1}\,dx.
\notag
\end{align}
Finally, recollecting \eqref{eq.eq}, \eqref{eq.Afin},
\eqref{eq.Bfin}, \eqref{eq.C2} and \eqref{eq.Vprimo}, and taking
into account the definition of $Q$, the proof of
\eqref{eq.Vsecondo} is complete.
\end{proof}

In our next preliminary result we establish some qualitative
properties of a class of $L^2$-invariant rescaling.

\begin{lemma}
    \label{rescprop}
    Let $\psi\in X_{\C}$ and $Q(\psi)\leq 0$ and assume that
    \begin{equation}
        \label{prange}
        p>3+\frac{4}{N}.
    \end{equation}
    Let $\sigma>0$ and define the rescaling $\psi^\sigma(x)=\sigma^{N/2}\psi(\sigma x)$. Then
    there exists $\sigma_0\in (0,1]$ such that following facts hold
    \begin{enumerate}
        \item $Q(\psi^{\sigma_0})=0$;
        \item $\sigma_0=1$ if and only if $Q(\psi)=0$;
        \item $\frac{\partial}{\partial\sigma}{\mathcal E}_{\omega}(\psi^\sigma)>0$ for $\sigma\in(0,\sigma_0)$, and
        $\frac{\partial }{\partial\sigma}{\mathcal E}_{\omega}(\psi^\sigma)<0$ for $\sigma\in(\sigma_0,\infty)$;
        \item $\sigma\mapsto {\mathcal E}_{\omega}(\psi^\sigma)$ is concave on $(\sigma_0,\infty)$;
        \item $\frac{\partial}{\partial\sigma}{\mathcal E}_{\omega}(\psi^\sigma)=\frac{Q(\psi^\sigma)}{\sigma}$.
    \end{enumerate}
\end{lemma}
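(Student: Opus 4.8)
The plan is to reduce all five assertions to a one–variable calculus study of the map $g(\sigma):={\mathcal E}_{\omega}(\psi^\sigma)$, $\sigma>0$. First I would record the elementary scaling identities obtained from the change of variables $y=\sigma x$:
$$
\int_{\R^N}|\nabla\psi^\sigma|^2dx=\sigma^2\!\int_{\R^N}|\nabla\psi|^2dx,\qquad
\int_{\R^N}|\psi^\sigma|^2|\nabla |\psi^\sigma||^2dx=\sigma^{N+2}\!\int_{\R^N}|\psi|^2|\nabla |\psi||^2dx,
$$
$$
\|\psi^\sigma\|_2=\|\psi\|_2,\qquad
\int_{\R^N}|\psi^\sigma|^{p+1}dx=\sigma^{\frac{N(p-1)}{2}}\!\int_{\R^N}|\psi|^{p+1}dx.
$$
Writing $a=\int_{\R^N}|\nabla\psi|^2dx$, $b=\int_{\R^N}|\psi|^2|\nabla |\psi||^2dx$, $d=\int_{\R^N}|\psi|^{p+1}dx$ and $\gamma=\frac{N(p-1)}{2}$, this gives
$$
g(\sigma)=\frac{a}{2}\sigma^2+b\,\sigma^{N+2}+\frac{\omega}{2}\|\psi\|_2^2-\frac{d}{p+1}\sigma^{\gamma},
$$
and the same computation applied to $Q$ yields $Q(\psi^\sigma)=a\sigma^2+(N+2)b\,\sigma^{N+2}-\frac{\gamma d}{p+1}\sigma^{\gamma}=\sigma\,g'(\sigma)$, which is exactly assertion (5). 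At this stage I would also note that $p>3+\frac{4}{N}$ is equivalent to $\gamma>N+2$; that we may assume $\psi\not\equiv 0$ (the rescaling being constant otherwise), so that $a>0$; and that $Q(\psi)\le 0$ then forces $d>0$, since $d=0$ would give $Q(\psi)=a+(N+2)b>0$.

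Next I would factor $g'(\sigma)=\sigma\,h(\sigma)$ with
$$
h(\sigma)=a+(N+2)b\,\sigma^{N}-\frac{\gamma d}{p+1}\,\sigma^{\gamma-2},
$$
so that $Q(\psi^\sigma)=\sigma^2 h(\sigma)$ and everything is governed by the sign of $h$ on $(0,\infty)$. Since $h(0)=a>0$ and $\gamma-2>N$, one has $h(\sigma)\to-\infty$ as $\sigma\to\infty$; moreover $h'(\sigma)=\sigma^{N-1}\big(N(N+2)b-(\gamma-2)\tfrac{\gamma d}{p+1}\sigma^{\gamma-2-N}\big)$ and, because $\gamma-2-N>0$, the bracket is strictly decreasing in $\sigma$, so $h$ is first strictly increasing and then strictly decreasing. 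Hence $h$ has a unique zero $\sigma_0>0$, with $h>0$ on $(0,\sigma_0)$ and $h<0$ on $(\sigma_0,\infty)$: this proves (1), and (3) follows at once from $g'(\sigma)=\sigma h(\sigma)$. Finally $Q(\psi)=Q(\psi^1)=h(1)\le 0$, together with the positivity of $h$ on $(0,\sigma_0)$, forces $\sigma_0\le 1$, i.e.\ $\sigma_0\in(0,1]$; and $\sigma_0=1$ precisely when $h(1)=0$, that is, when $Q(\psi)=0$, which is (2).

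For the concavity statement (4) I would differentiate once more, $g''(\sigma)=h(\sigma)+\sigma h'(\sigma)$. Since the unique zero $\sigma_0$ of $h$ lies strictly beyond the point at which $h$ changes monotonicity (on the increasing branch $h$ stays $\ge a>0$), one has $h'(\sigma)<0$ for all $\sigma>\sigma_0$; combined with $h(\sigma)<0$ on $(\sigma_0,\infty)$ this gives $g''(\sigma)<0$ there, so $\sigma\mapsto{\mathcal E}_{\omega}(\psi^\sigma)$ is concave on $(\sigma_0,\infty)$. The only genuinely delicate point is the careful bookkeeping of the scaling exponents and the correct use of the hypothesis $p>3+\frac{4}{N}$ through the inequality $\gamma>N+2$ — this is exactly what makes the $-\sigma^{\gamma}$ term eventually dominate both $\sigma^2$ and $\sigma^{N+2}$, hence what gives $h$ its ``increasing then decreasing'' profile; once that profile is established, all five conclusions drop out, and I do not expect a substantial obstacle beyond this.
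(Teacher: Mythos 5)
Your proof is correct and follows essentially the same route as the paper: compute the scaling identities, obtain $\frac{\partial}{\partial\sigma}{\mathcal E}_{\omega}(\psi^\sigma)=Q(\psi^\sigma)/\sigma$, and read off (1)--(3) from the sign profile of the resulting one-variable function, which you make explicit via $h$ where the paper says ``it is readily seen.'' Your concavity argument ($g''=h+\sigma h'$ with $h<0$ and $h'<0$ beyond $\sigma_0$) is a mild variant of the paper's computation (which substitutes $\sigma=t\sigma_0$ and uses $Q(\psi^{\sigma_0})=0$), but it is the same calculus in substance and is complete.
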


\begin{proof}
By direct computation, we have
$$
{\mathcal
E}_{\omega}(\psi^\sigma)=\frac{\sigma^2}{2}\int_{\R^N}|\nabla
\psi|^2dx+ \sigma^{N+2}\int_{\R^N}|\psi|^2|\nabla |\psi||^2dx +
\frac{\omega}{2}\int_{\R^N}|u|^2 dx
-\frac{\sigma^{\frac{N(p-1)}{2}}}{p+1}\int_{\R^N}|\psi|^{p+1}dx,
$$
so that, using the functional $Q$ defined by~\eqref{defQ}, for all $\sigma>0$, we
get
\begin{align*}
\frac{\partial }{\partial\sigma}{\mathcal E}_{\omega}(\psi^\sigma)
&= \sigma\int_{\R^N}|\nabla \psi|^2dx+
(N+2)\sigma^{N+1}\int_{\R^N}|\psi|^2|\nabla |\psi||^2dx \\
&-\frac{N(p-1)}{2(p+1)}\sigma^{\frac{N(p-1)}{2}-1}\int_{\R^N}|\psi|^{p+1}dx
=\frac{1}{\sigma} Q(\psi^\sigma).
\end{align*}
Then, taking into account~\eqref{prange}, it is readily seen that
there exists $\sigma_0\in(0,1]$ such that
$$
Q(\psi^{\sigma_0})=\sigma_0\frac{\partial
}{\partial\sigma}{\mathcal
E}_{\omega}(\psi^\sigma)_{|_{\sigma=\sigma_0}}=0,
$$
as well as $\frac{\partial }{\partial\sigma}{\mathcal
E}_{\omega}(\psi^\sigma)>0$ for $\sigma\in(0,\sigma_0)$ and
$\frac{\partial}{\partial\sigma}{\mathcal
E}_{\omega}(\psi^\sigma)<0$ for $\sigma\in(\sigma_0,\infty)$.
Furthermore, writing $\sigma =t\sigma_0$ we have
\begin{align*}
\frac{\partial^2 }{\partial\sigma^2}{\mathcal
E}_{\omega}(\psi^\sigma) &= \int_{\R^N}|\nabla \psi|^2dx+
(N+2)(N+1)t^N \sigma_0^{N}\int_{\R^N}|\psi|^2|\nabla |\psi||^2dx \\
&-\frac{N(p-1)}{2(p+1)}\Big(\frac{N(p-1)}{2}-1\Big)t^{\frac{N(p-1)}{2}-2}\sigma_0^{\frac{N(p-1)}{2}-2}\int_{\R^N}|\psi|^{p+1}dx,\\
& = t^N \Big( \frac{1}{t^N}\int_{\R^N}|\nabla \psi|^2dx +
(N+2)(N+1)\sigma_0^{N}\int_{\R^N}|\psi|^2|\nabla |\psi||^2dx\\
& -
\frac{N(p-1)}{2(p+1)}\Big(\frac{N(p-1)}{2}-1\Big)t^{\frac{Np-3N-4}{2}}\sigma_0^{\frac{N(p-1)}{2}-2}
\int_{\R^N}|\psi|^{p+1}dx\Big).
\end{align*}
Since, of course, we have
\begin{align*}
\int_{\R^N}|\nabla \psi|^2dx +
(N+2)(N+1)\sigma_0^{N}\int_{\R^N}|\psi|^2|\nabla |\psi||^2dx\\
-
\frac{N(p-1)}{2(p+1)}\Big(\frac{N(p-1)}{2}-1\Big)\sigma_0^{\frac{N(p-1)}{2}-2}\int_{\R^N}|\psi|^{p+1}dx
\leq 0
\end{align*}
and $t >1$, it follows that the quantity inside the parenthesis is
negative. Hence the map $\sigma\mapsto {\mathcal
E}_{\omega}(\psi^\sigma)$ is concave on $(\sigma_0,\infty)$,
concluding the proof.
\end{proof}

In order to establish the instability of ground states we now show,
in the spirit of~\cite{lecoz} that they enjoy two additional
variational characterizations.  First, we have the following

\begin{lemma}\label{nehari}
Assume that $\omega >0$ and $3\leq p\leq \frac{3N+2}{N-2}$ if $N
\geq 3$ and $3 \leq p$ if $N=1,2$. Then the set of minimizers of
\begin{equation}
    \label{solproblemground}
d_{\omega}=\inf\{{\mathcal E}_{\omega}(u): {\mathcal
I}_{\omega}(u)=0\},
\end{equation}
where
$$
{\mathcal I}_{\omega}(u)=\int_{\R^N}|\nabla
\phi|^2dx+\omega\int_{\R^N}|\phi|^2dx +4\int_{\R^N}|\phi|^2|\nabla
|\phi||^2dx-\int_{\R^N}|\phi|^{p+1}dx.
$$
is exactly the set of ground state ${\mathcal G_{\omega}}$. In
addition the value of the two infimums are equal.
\end{lemma}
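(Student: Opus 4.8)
The plan is to prove at once that $d_\omega=m_\omega$ and that the set of minimizers of~\eqref{solproblemground} coincides with $\mathcal{G}_\omega$, the common value being attained. First I would observe that $\mathcal{I}_\omega$ is, up to notation, the Nehari functional attached to $\mathcal{E}_\omega$: differentiating $\mathcal{E}_\omega$ at $u$ in the direction $u$ one finds $\mathcal{E}_\omega'(u)u=\mathcal{I}_\omega(u)$, the coefficient $4$ in front of $\int_{\R^N}|u|^2|\nabla|u||^2\,dx$ being produced by the quartic homogeneity of that term (recall~\eqref{weak-solution}). Consequently every nontrivial weak solution of~\eqref{gs1}, and in particular every $u\in\mathcal{G}_\omega$, lies on the constraint $\{\mathcal{I}_\omega=0\}$; since such a $u$ satisfies $\mathcal{E}_\omega(u)=m_\omega$, this already yields $d_\omega\le m_\omega$ and shows that the ground states are admissible competitors for $d_\omega$.

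Next I would establish that the constraint is transversal. For $u\ne 0$ with $\mathcal{I}_\omega(u)=0$, using the homogeneities $2,2,4,p+1$ of the four terms of $\mathcal{I}_\omega$ together with the relation $\int_{\R^N}|u|^{p+1}\,dx=\int_{\R^N}|\nabla u|^2\,dx+\omega\int_{\R^N}|u|^2\,dx+4\int_{\R^N}|u|^2|\nabla|u||^2\,dx$ which follows from $\mathcal{I}_\omega(u)=0$, a direct computation gives
\begin{equation*}
\mathcal{I}_\omega'(u)u=-(p-1)\Big(\int_{\R^N}|\nabla u|^2\,dx+\omega\int_{\R^N}|u|^2\,dx\Big)-4(p-3)\int_{\R^N}|u|^2|\nabla|u||^2\,dx,
\end{equation*}
which is strictly negative when $\omega>0$, $p\ge 3$ and $u\ne 0$. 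This is exactly where the hypothesis $p\ge 3$ of the lemma is used (for $p<3$ the set $\{\mathcal{I}_\omega=0\}$ need not be a genuine constraint). In particular $\mathcal{I}_\omega'(u)\ne 0$ along the constraint, which is therefore a $C^1$ hypersurface on which $\mathcal{E}_\omega$ can be minimized in the usual way.

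The heart of the argument is to show that $d_\omega$ is attained. I would enlarge the constraint to $\mathcal{M}=\{u\in X_\C\setminus\{0\}:\mathcal{I}_\omega(u)\le 0\}$, a set stable both under $u\mapsto|u|$ (by the pointwise bound $|\nabla|u||\le|\nabla u|$) and under Schwarz symmetrization, neither operation increasing $\mathcal{E}_\omega$; moreover, since $p\ge 3$, the inequality $\mathcal{I}_\omega(u)\le 0$ forces $\mathcal{E}_\omega(u)\ge\frac{p-1}{2(p+1)}\big(\|\nabla u\|_2^2+\omega\|u\|_2^2\big)$ on $\mathcal{M}$, so that $\inf_{\mathcal{M}}\mathcal{E}_\omega>0$ and any minimizing sequence is bounded and, by the subcritical Sobolev inequality, does not vanish. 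Passing to the dual unknown $v=r^{-1}(|u|)$ of the proof of Theorem~\ref{ex}, which turns the quasilinear energy into the semilinear functional $\mathcal{T}_\omega$ on $H^1(\R^N)$ and the previous bound into an $H^1$ bound, one may then argue as in the existence theory for~\eqref{exis1} (cf.\ the proof of Theorem~\ref{ex} and~\cite{cojean,bere1,bere2}): in the subcritical range $p<\frac{3N+2}{N-2}$ the relevant embeddings of radial functions are compact, so the minimizing sequence converges up to a subsequence and produces $u^*\in\mathcal{M}$ realizing $\inf_{\mathcal{M}}\mathcal{E}_\omega$. At $u^*$ the constraint is necessarily active: were $\mathcal{I}_\omega(u^*)<0$, then $u^*$ would be an interior minimizer and hence a free critical point of $\mathcal{E}_\omega$, so $\mathcal{I}_\omega(u^*)=\mathcal{E}_\omega'(u^*)u^*=0$, a contradiction. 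Thus $u^*\in\{\mathcal{I}_\omega=0\}$ and $\inf_{\mathcal{M}}\mathcal{E}_\omega=d_\omega$, so $d_\omega$ is attained at $u^*$. I expect this to be the main obstacle, even though it only amounts to reconciling the loss of compactness on $\R^N$ with the quasilinear term, for which all the tools are already available from the proof of Theorem~\ref{ex}.

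Finally, the Lagrange multiplier rule closes the proof. By the transversality of the constraint there is $\Lambda\in\R$ with $\mathcal{E}_\omega'(u^*)=\Lambda\,\mathcal{I}_\omega'(u^*)$; pairing with $u^*$ gives $0=\mathcal{I}_\omega(u^*)=\mathcal{E}_\omega'(u^*)u^*=\Lambda\,\mathcal{I}_\omega'(u^*)u^*$, and since $\mathcal{I}_\omega'(u^*)u^*<0$ we conclude $\Lambda=0$. Hence $u^*$ is a nontrivial weak solution of~\eqref{gs1}, so $d_\omega=\mathcal{E}_\omega(u^*)\ge m_\omega$, and combined with the first paragraph this gives $d_\omega=m_\omega$. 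The identification of the minimizers is then immediate: any $u\in\mathcal{G}_\omega$ satisfies $\mathcal{I}_\omega(u)=0$ and $\mathcal{E}_\omega(u)=m_\omega=d_\omega$, hence minimizes $d_\omega$; conversely, the Lagrange multiplier argument applied to an arbitrary minimizer of $d_\omega$ shows it solves~\eqref{gs1} with action $d_\omega=m_\omega$, hence it belongs to $\mathcal{G}_\omega$. This proves that the two minimization problems have the same minimizers and the same value.
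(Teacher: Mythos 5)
Your overall architecture (every nontrivial solution lies on $\{\mathcal I_\omega=0\}$, hence $d_\omega\le m_\omega$; attainment of $d_\omega$; a natural-constraint argument showing minimizers solve~\eqref{gs1}, hence $d_\omega\ge m_\omega$ and the two minimizer sets coincide) is sound, and your computation of $\mathcal I_\omega'(u)u$ on the constraint is correct. But the attainment step, which you rightly single out as the heart of the matter, rests on a sign error. From the identity
\begin{equation*}
\mathcal E_\omega(u)=\frac{p-1}{2(p+1)}\Big(\int_{\R^N}|\nabla u|^2dx+\omega\int_{\R^N}|u|^2dx\Big)
+\frac{p-3}{p+1}\int_{\R^N}|u|^2|\nabla|u||^2dx+\frac{1}{p+1}\,\mathcal I_\omega(u),
\end{equation*}
the condition $\mathcal I_\omega(u)\le 0$ yields an \emph{upper} bound, not the lower bound you claim, for $\mathcal E_\omega$ on $\mathcal M=\{u\neq0:\ \mathcal I_\omega(u)\le0\}$. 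In fact $\inf_{\mathcal M}\mathcal E_\omega=-\infty$: if $u\neq0$ and $\mathcal I_\omega(u)\le0$, then for $p\ge3$ one checks that $tu\in\mathcal M$ for every $t\ge1$ while $\mathcal E_\omega(tu)\to-\infty$ as $t\to\infty$. So there is no minimizer over the enlarged set, and the subsequent step (an interior minimizer would be a free critical point, hence the constraint is active) has nothing to apply to. The coercivity you need holds only on the exact Nehari set $\{\mathcal I_\omega=0\}$, which is where the minimization must be carried out.

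Even on $\{\mathcal I_\omega=0\}$ two further points of your sketch do not go through as written. The dual change of unknown $v=r^{-1}(|u|)$ converts $\mathcal E_\omega$ into $\mathcal T_\omega$, but it does not convert $\{\mathcal I_\omega(u)=0\}$ into the Nehari set of $\mathcal T_\omega$ (the substitution $u=r(v)$ is not homogeneous), so the compactness and existence theory for~\eqref{exis1} cannot simply be imported for this constrained problem. Moreover, the Lagrange multiplier rule you invoke requires $\mathcal E_\omega$ and $\mathcal I_\omega$ to be $C^1$ on a Banach space containing the constraint, whereas $X$ is not even a vector space and $\mathcal E_\omega$ is not differentiable on it in any standard sense; circumventing this is exactly the point of the Nehari-type analysis of the quasilinear functional. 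The paper bypasses both obstacles by quoting \cite[Theorem 1.1 and Lemma 2.5]{LiWaWa2}, which give, for the real functional on $X$, the existence of a minimizer of~\eqref{solproblemground} and the fact that it solves~\eqref{gs1}; its own argument is devoted to reducing complex minimizers to real ones (via $\mathcal E_\omega(|u|)\le\mathcal E_\omega(u)$, $\mathcal I_\omega(|u|)\le0$, projecting $t|u|$ back onto the constraint and using $p\ge3$ to force $t=1$, whence $u=e^{{\rm i}\theta}|u|$) and to the identification of the minimizer set with $\mathcal G_\omega$. Your first and last paragraphs reproduce that identification correctly, but without a valid attainment argument (or an appeal to a result such as~\cite{LiWaWa2}) the proof is incomplete.
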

\begin{proof}
First we show that if $u \in X_{\C}$ is a minimizer of
$d_{\omega}$ then $|u| \in X$ is also a minimizer of $d_{\omega}$.
Let $u \in X_{\C}$ with
    ${\mathcal
I}_{\omega}(u)=0$. Then ${\mathcal E}_{\omega}(|u|) \leq {\mathcal
E}_{\omega}(u)$ as well as
    ${\mathcal I}_{\omega}(|u|) \leq {\mathcal I}_{\omega}(u) = 0$. In particular and since $P \geq 3$, there
    exists $t \in (0,1]$ such that ${\mathcal I}_{\omega}(t|u|) = 0$.  Observe now that,
    for all $v\in X_{\C}$ such that ${\mathcal I}_{\omega}(v)=0$, it holds
    $$
    {\mathcal E}_{\omega}(v) = \frac{p-1}{2(p+1)}\int_{\R^N}|\nabla
    v|^2 dx + \frac{p-3}{p+1}\int_{\R^N}|v|^2 |\nabla |v||^2 dx +
    \omega \frac{p-1}{2(p+1)}\int_{\R^N}|v|^2 dx.
    $$
    Thus, since $p\geq 3$, it is readily seen that
    $$
    0 < {\mathcal E}_{\omega}(t|u|)\leq t^2 {\mathcal E}_{\omega}(u).
    $$
    In particular, if $u \in X_{\C}$ is a complex minimizer of $d_{\omega}$, then we have
    $$
{\mathcal E}_{\omega}(u)=d_{\omega}=\inf_{{\mathcal I}_{\omega}=0}
{\mathcal E}_{\omega}(\phi)\leq {\mathcal E}_{\omega}(t|u|)\leq
t^2 {\mathcal E}_{\omega}(u).
    $$
    Now, recalling that ${\mathcal E}_{\omega}(u)>0$ and $t\leq 1$, we immediately get $t=1$. Thus
    ${\mathcal I}_{\omega}(|u|)= {\mathcal I}_{\omega}(u)$ and in turn
    ${\mathcal E}_{\omega}(|u|)={\mathcal E}_{\omega}(u)$ proving that $|u| \in X$ is also a minimizer. Obviously
     it is only possible if the set $\{x\in\R^N:|\nabla |u|(x)|\neq |\nabla u(x)|\}$ has zero Lebesgue
    measure, which in turn implies that $u=|u|e^{{\rm i}\theta}$, for some $\theta\in{\mathbb S}^1$
    (see e.g. Step III of the proof of Theorem~\ref{ex}).
Now, when ${\mathcal E}_{\omega}$ is considered over $X$,
in~\cite[Theorem 1.1]{LiWaWa2} it is established that there exists
a nontrivial solution to the minimization
problem~\eqref{solproblemground} and that this minimizer is a
solution to equation~\eqref{gs1} (cf.~\cite[Lemma 2.5]{LiWaWa2}).
Clearly, since any minimizer is of the form  $u=|u|e^{{\rm
i}\theta}$ it is also solution to equation~\eqref{gs1}. Now, any
element $u \in X$ of ${\mathcal G_{\omega}}$ must satisfy
${\mathcal I}_{\omega}(u)=0$ and thus we deduce that the set of
ground states ${\mathcal G_{\omega}}$ and the set of minimizer
of~\eqref{solproblemground} coincide and that the values of the
two infimum values are equal.
\end{proof}

We also have the following

\begin{lemma}
Let us set
$$
c_{\omega}=\inf\{{\mathcal E}_{\omega}(\phi):\phi\in{\mathcal M}\}
\mbox{ where } {\mathcal M}=\{\phi\in X\setminus\{0\}:\,\,
Q(\phi)=0,\,\, {\mathcal I}_{\omega}(\phi)\leq 0\}.
$$
Then $c_{\omega}=d_{\omega}\, ( = m_{\omega}).$
\end{lemma}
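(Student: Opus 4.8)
The plan is to establish the two inequalities $c_{\omega}\le d_{\omega}$ and $c_{\omega}\ge d_{\omega}$ separately; combined with $d_{\omega}=m_{\omega}$, which is the content of Lemma~\ref{nehari}, this gives the statement. For $c_{\omega}\le d_{\omega}$ the point is to show that ground states lie in $\mathcal M$. Using Theorem~\ref{ex} I would pick a real non-negative ground state $u\in X$, so that $\mathcal E_{\omega}(u)=m_{\omega}=d_{\omega}$. Being a weak solution of~\eqref{gs1}, $u$ satisfies $\mathcal I_{\omega}(u)=0$ (test the equation against $u$ itself; this identity is already invoked in the proof of Lemma~\ref{nehari}), and by Lemma~\ref{poho} it also satisfies $P(u)=0$.

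The key elementary observation is the algebraic identity
$$
Q(v)=\frac{N}{2}\,\mathcal I_{\omega}(v)-N\,P(v),\qquad v\in X_{\C},
$$
which one reads off directly from the definitions of $P$, $Q$ and $\mathcal I_{\omega}$, both sides being the same linear combination of $\int_{\R^N}|\nabla v|^2$, $\int_{\R^N}|v|^2|\nabla|v||^2$, $\int_{\R^N}|v|^2$ and $\int_{\R^N}|v|^{p+1}$. Hence $Q(u)=0$, so $u\in\mathcal M$ and $c_{\omega}\le\mathcal E_{\omega}(u)=d_{\omega}$. (One may instead note that $Q(u)=0$ because the density $|u(x)e^{-i\omega t}|^2=|u(x)|^2$ is time-independent, so the variance $\V$ of the standing wave is constant and $\V''\equiv0$ in Lemma~\ref{lem.variance}.)

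For the reverse inequality $c_{\omega}\ge d_{\omega}$ I would fix $\phi\in\mathcal M$ (so $\phi\in X\setminus\{0\}$, $Q(\phi)=0$, $\mathcal I_{\omega}(\phi)\le0$) and apply the $L^2$-invariant rescaling $\phi^{\sigma}(x)=\sigma^{N/2}\phi(\sigma x)$ of Lemma~\ref{rescprop}, which is available since $p>3+4/N$. Because $Q(\phi)=0$, part~(2) of that lemma gives $\sigma_0=1$, and part~(3) gives that $\sigma\mapsto\mathcal E_{\omega}(\phi^{\sigma})$ is (strictly) increasing on $(0,1]$. On the other hand $\|\phi^{\sigma}\|_2=\|\phi\|_2$ while the remaining three terms of $\mathcal I_{\omega}(\phi^{\sigma})$ carry the positive powers $\sigma^{2}$, $\sigma^{N+2}$, $\sigma^{N(p-1)/2}$, so $\mathcal I_{\omega}(\phi^{\sigma})\to\omega\|\phi\|_2^2>0$ as $\sigma\to0^+$, whereas $\mathcal I_{\omega}(\phi^{1})=\mathcal I_{\omega}(\phi)\le0$; by continuity there is $\sigma_*\in(0,1]$ with $\mathcal I_{\omega}(\phi^{\sigma_*})=0$. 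Then $\phi^{\sigma_*}\neq0$ is admissible for the infimum defining $d_{\omega}$, so $\mathcal E_{\omega}(\phi^{\sigma_*})\ge d_{\omega}$, while the monotonicity above gives $\mathcal E_{\omega}(\phi^{\sigma_*})\le\mathcal E_{\omega}(\phi^{1})=\mathcal E_{\omega}(\phi)$. Hence $\mathcal E_{\omega}(\phi)\ge d_{\omega}$, and passing to the infimum over $\mathcal M$ yields $c_{\omega}\ge d_{\omega}$.

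The one genuinely delicate point is the choice of rescaling in the second step. The naive multiplicative scaling $t\phi$ does reach the manifold $\{\mathcal I_{\omega}=0\}$ at some $t_0\in(0,1]$, but $t_0$ is the \emph{maximum} point of $t\mapsto\mathcal E_{\omega}(t\phi)$, so it only produces $\mathcal E_{\omega}(t_0\phi)\ge\mathcal E_{\omega}(\phi)$, which is useless for bounding $\mathcal E_{\omega}(\phi)$ from below. It is precisely the $L^2$-preserving dilation $\phi^{\sigma}$, together with the monotonicity of $\sigma\mapsto\mathcal E_{\omega}(\phi^{\sigma})$ guaranteed by the constraint $Q(\phi)=0$ via Lemma~\ref{rescprop}, that lets one push the energy down while still landing on the Nehari-type constraint $\mathcal I_{\omega}=0$. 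Everything else — the identity for $Q$, the fact that ground states satisfy $\mathcal I_{\omega}=0$, and the continuity and sign bookkeeping in $\sigma$ — is routine.
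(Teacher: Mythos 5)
Your proof is correct and follows essentially the same route as the paper: the easy inequality $c_{\omega}\leq d_{\omega}$ comes from showing that a ground state belongs to ${\mathcal M}$, and the reverse inequality is obtained exactly as in the paper by rescaling $\phi^{\sigma}$, finding $\sigma_*\in(0,1]$ with ${\mathcal I}_{\omega}(\phi^{\sigma_*})=0$ via the limit ${\mathcal I}_{\omega}(\phi^{\sigma})\to\omega\|\phi\|_2^2>0$ as $\sigma\to 0^+$, and using the monotonicity of $\sigma\mapsto{\mathcal E}_{\omega}(\phi^{\sigma})$ from Lemma~\ref{rescprop} (with $\sigma_0=1$ since $Q(\phi)=0$). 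The only cosmetic difference is that you derive $Q(u)=0$ for the ground state from the identity $Q=\tfrac{N}{2}{\mathcal I}_{\omega}-NP$ (Lemma~\ref{poho} plus the Nehari identity), whereas the paper invokes the virial identity of Lemma~\ref{lem.variance} for the standing wave, which is precisely the alternative you mention in parentheses; both are valid.
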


\begin{proof}
Let $u \in X_{\C}$ be a solution to~\eqref{solproblemground}. By
Lemma~\ref{nehari} it is a ground state solution of~\eqref{gs1}
and applying the virial identity~\eqref{eq.Vsecondo} to a standing
wave solution we immediately deduce that $Q(u)=0$. By definition
${\mathcal I}_{\omega}(u)=0$ and thus we have $u\in {\mathcal M}$.
Hence $c_{\omega}\leq d_{\omega}$, since ${\mathcal
E}_{\omega}(u)=d_{\omega}$. On the other hand, given
$\phi\in{\mathcal M}$, either ${\mathcal I}_{\omega}(\phi)=0$ (so
that ${\mathcal E}_{\omega}(\phi)\geq d_{\omega}$) or ${\mathcal
I}_{\omega}(\phi)<0$. In this second case, if $\sigma>0$ and we
consider the rescaling $\phi^\sigma(x)=\sigma^{N/2}\phi(\sigma
x)$, we have ${\mathcal I}_{\omega}(\phi^1)<0$ and
\begin{align*}
\lim_{\sigma\to 0^+}{\mathcal I}_{\omega}(\phi^\sigma)
&=\lim_{\sigma\to 0^+}\Big(
\sigma^2\int_{\R^N}|\nabla \phi|^2dx+\omega\int_{\R^N}|\phi|^2dx   \\
& +4\sigma^{N+2}\int_{\R^N}|\phi|^2|\nabla
|\phi||^2dx-\sigma^{\frac{N(p-1)}{2}}\int_{\R^N}|\phi|^{p+1}dx\Big)>0.
\end{align*}
In turn, one can find $\hat\sigma\in(0,1)$ such that ${\mathcal
I}_{\omega}(\phi^{\hat\sigma})=0$. Then, we get ${\mathcal
E}_{\omega}(\phi^{\hat\sigma})\geq d_{\omega}$. Since $Q(\phi)=0$
and $\|\phi\|_2=\|\phi^{\hat\sigma}\|_2$, from
Lemma~\ref{rescprop} we obtain $ {\mathcal E}_{\omega}(\phi)\geq
{\mathcal E}_{\omega}(\phi^{\hat\sigma})\geq d_{\omega}$. Whence $
{\mathcal E}_{\omega}(\phi)\geq d_{\omega}$ holds true for any
$\phi\in{\mathcal M}$, which yields $c_{\omega}\geq d_{\omega}$,
proving the claim.
\end{proof}

\vskip3pt \noindent {\it Proof of Theorem~\ref{instab}.}
 Let $\varepsilon>0$ be fixed and
consider $u^\sigma(x)=\sigma^{N/2}u(\sigma x)$ for the ground
state solution $u$. We have $\|u\|_2=\|u^{\sigma}\|_2$ and
by the continuity of the mapping $\sigma \mapsto \sigma^{N/2}u(\sigma x)$,
it is clear that, for $\sigma$ sufficiently close to 1, $\|u-u^\sigma \|_{H^1(\R^N)}\leq \varepsilon$.
Furthermore,
\begin{equation}
        \label{invset0}
{\mathcal E}_{\omega}(u^\sigma)<{\mathcal E}_{\omega}(u),\quad
Q(u^\sigma)<0,\quad {\mathcal I}_{\omega}(u^\sigma)<0,
\end{equation}
provided that $\sigma>1$ is sufficiently close to $1$. The first
two inequalities just follow by Lemma~\eqref{rescprop}. Concerning
the last one, it holds
\begin{align*}
    & {\mathcal
I}_{\omega}(u^\sigma) =2{\mathcal
E}_{\omega}(u^\sigma)+\frac{2}{N}Q(u^\sigma)-
    \frac{4}{N}\int_{\R^N}|u^\sigma|^2|\nabla u^\sigma|^2dx-\frac{2}{N}\int_{\R^N}|\nabla u^\sigma|^2dx \\
    &\leq 2{\mathcal E}_{\omega}(u)+\frac{2}{N}Q(u)-{\mathcal
I}_{\omega}(u)
    -\frac{4\sigma^{N+2}}{N}\int_{\R^N}|u|^2|\nabla u|^2dx-\frac{2\sigma^2}{N}\int_{\R^N}|\nabla u|^2dx \\
    &=\frac{4}{N}\int_{\R^N}|u|^2|\nabla u|^2dx-\frac{2}{N}\int_{\R^N}|\nabla u|^2dx
    -\frac{4\sigma^{N+2}}{N}\int_{\R^N}|u|^2|\nabla u|^2dx-\frac{2\sigma^2}{N}\int_{\R^N}|\nabla u|^2dx \\
    & =\frac{4}{N}(1-\sigma^{N+2})\int_{\R^N}|u|^2|\nabla u|^2dx
    +\frac{2}{N}(1-\sigma^2)\int_{\R^N}|\nabla u|^2dx<0.
\end{align*}
Now fixing a $\sigma >1$ such that~\eqref{invset0} hold we
approximate $u^{\sigma} \in X_{\C}$ by a function $v \in
C_0^{\infty}(\R^N) \subset H^{s+2}(\R^N)$ in such a way that we
still have $\|v\|_2=\|u\|_2$, $\|v-u\|_{H^1(\R^N)}\leq \varepsilon$ and
\begin{equation}
        \label{invset00}
{\mathcal E}_{\omega}(v)<{\mathcal E}_{\omega}(u),\quad
Q(v)<0,\quad {\mathcal I}_{\omega}(v)<0.
\end{equation}
This comes from the fact that by Theorem \ref{ex}, the ground state $u$ and
then $u^\sigma$ are bounded as well as their derivatives up to order 2. Then  direct
estimates on $\mathcal{E}_\omega$, $Q$ and $\mathcal{I}_\omega$ give the desired
inequalities \eqref{invset00}.
Assume that $\phi(t)$ is the solution of~\eqref{eq.schr1} with
initial data $\phi(0)=v$.
    Then, we claim
    \begin{equation}
        \label{invset}
{\mathcal E}_{\omega}(\phi(t))<{\mathcal E}_{\omega}(u),\quad
Q(\phi(t))<0,\quad {\mathcal I}_{\omega}(\phi(t))<0,\quad\text{for
all $t\in [0,T_{{\rm max}})$},
\end{equation}
$T_{{\rm max}}\in(0,\infty]$ being the maximal existence time.
First, due to the conservation of the energy and~\eqref{invset0},
we get
$$
{\mathcal E}_{\omega}(\phi(t))= {\mathcal
E}_{\omega}(v)<{\mathcal E}_{\omega}(u), \quad\text{for all
$t\in [0,T_{{\rm max}})$}.
$$
In turn, it follows immediately that ${\mathcal
I}_{\omega}(\phi(t))\neq 0$ for all $t\in [0,T_{{\rm max}})$.
Hence ${\mathcal I}_{\omega}(\phi(t))<0$ for all $t\in [0,T_{{\rm
max}})$ since it is negative for $t=0$. Similarly, $Q(\phi(t))\neq
0$ for all $t\in [0,T_{{\rm max}})$, otherwise if $Q(\phi(t_0))=0$
for some $t_0\in [0,T_{{\rm max}})$, we would have $\phi(t_0)\in
{\mathcal M}$, yielding ${\mathcal E}_{\omega}(\phi(t_0))\geq
{\mathcal E}_{\omega}(u)$ which contradicts the first inequality
of~\eqref{invset}. Hence $Q(\phi(t))<0$ for all $t\in [0,T_{{\rm
max}})$ as it is negative for $t=0$, concluding the proof
of~\eqref{invset}. \vskip4pt \noindent Let now $\psi=\phi(t)$ be
the solution to~\eqref{eq.schr1} at a fixed time $t\in (0,T_{{\rm
max}})$ and let $\psi^\sigma$ be the usual $L^2$-invariant
rescaling. We know that $Q(\psi)<0$. Hence there exists
$\tilde\sigma\in(0,1)$ such that $Q(\psi^{\tilde\sigma})=0$. If
${\mathcal I}_{\omega}(\psi^{\tilde\sigma})\leq 0$ we do not
change the value of $\tilde\sigma$, otherwise we pick
$\hat\sigma\in (\tilde\sigma,1)$ such that ${\mathcal
I}_{\omega}(\psi^{\hat\sigma})=0$. In any case, one obtains $
{\mathcal E}_{\omega}(\psi^{\tilde\sigma})\geq d_{\omega}$ and
$Q(\psi^{\tilde\sigma})\leq 0$. Therefore, by Lemma \ref{rescprop}
\begin{align*}
 {\mathcal E}_{\omega}(v) & = {\mathcal E}_{\omega}(\psi)\geq
{\mathcal E}_{\omega}(\psi^{\tilde\sigma})+ (1-\tilde\sigma)
\frac{\partial}{\partial\sigma}{\mathcal E}_{\omega}(\psi^{\sigma})_{|_{\sigma=1}}   \\
& = {\mathcal
E}_{\omega}(\psi^{\tilde\sigma})+(1-\tilde\sigma)Q(\psi)>d_{\omega}+Q(\psi).
\end{align*}
Putting $\varrho_0:=d_\omega-{\mathcal E}_{\omega}(v)>0$,
concluding we have
$$
Q(\phi(t))\leq -\varrho_0,\quad\text{for all $t\in [0,T_{{\rm
max}})$}.
$$
Finally, assuming that $T_{\text{max}}=+\infty$ and
using the virial identity of Lemma~\ref{lem.variance},
we obtain
$$
0<\V(t)\leq \V(0)+\V'(0)t-4\varrho_0 t^2
$$
which yields a contradiction taking $t$ sufficiently large. Then
$0<T_{\text{max}}<+\infty$ and the solution blows-up in finite time. This
concludes the proof. \qed

\medskip

\section{Stationary solutions with prescribed $L^2$ norm}\label{stationary-problems2}

In this section we study the minimization problem~\eqref{defvalc}.
We prove the existence of a minimizer when $ 1 < p < 3 +
\frac{4}{N}$ and $ m(c) <0$. We also discuss the condition $m(c)
<0$ and we prove Theorem~\ref{no-minimizer}. Consider the
(complex) minimization problem
\begin{eqnarray}
    \label{0.1-bis}
\mbox{ minimize} \quad {\mathcal E} \quad \mbox{ on } \quad
\|u\|_2^2= c,
\end{eqnarray}
where $c$ is a positive number. We have the following result.

\begin{proposition}\label{complex}
Let $v$ be a solution to the minimization problem~\eqref{0.1-bis}.
Then
\begin{equation*}
v(x)=e^{i\theta}|v(|x|)|,\quad x\in\R^N,
\end{equation*}
for some $\theta\in {\mathbb S}^{1}$. In particular, the solutions
of problem~\eqref{0.1-bis} are, up to a constant complex phase,
real-valued positive and radially symmetric.
\end{proposition}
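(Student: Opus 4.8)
The plan is to imitate Step~III (and, for the symmetry part, Step~IV) of the proof of Theorem~\ref{ex}. \emph{First} I would reduce to real, nonnegative minimizers. For every $u\in X_{\C}$ the three quantities $\int_{\R^N}|u|^2|\nabla|u||^2\,dx$, $\int_{\R^N}|u|^{p+1}\,dx$ and $\|u\|_2^2$ depend only on $|u|$, while the diamagnetic inequality \eqref{semi} gives $\int_{\R^N}|\nabla|u||^2\,dx\le\int_{\R^N}|\nabla u|^2\,dx$. Hence $|u|\in X$ is admissible for \eqref{0.1-bis} whenever $u$ is, and ${\mathcal E}(|u|)\le{\mathcal E}(u)$. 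Applying this to a solution $v$ of \eqref{0.1-bis} shows that $|v|$ is also a minimizer and that ${\mathcal E}(|v|)={\mathcal E}(v)$, which forces $|\nabla|v|(x)|=|\nabla v(x)|$ for a.e.\ $x\in\R^N$. Exactly as in Step~III of Theorem~\ref{ex}, this is equivalent to $\Re\, v\nabla(\Im\, v)=\Im\, v\nabla(\Re\, v)$ a.e., hence to $\Re(i\,\overline v\,\nabla v)=0$ a.e.; and since $|v|$ is continuous and strictly positive on $\R^N$ this produces a constant $\theta\in{\mathbb S}^1$ with $v=e^{i\theta}|v|$. The positivity of $|v|$ comes from the Euler--Lagrange equation: a minimizer of \eqref{0.1-bis} solves \eqref{gs1} with $\omega$ a Lagrange multiplier, so the dual function $r^{-1}(|v|)$ (with $r$ as in \eqref{changevariable}) solves a semilinear equation of type \eqref{exis1}, is therefore $C^2$, and, being nonnegative and nontrivial, is strictly positive by the strong maximum principle; hence $|v|>0$ on $\R^N$.

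\emph{Next} I would obtain the radial symmetry by Schwarz symmetrization. Put $u=|v|\ge0$ and let $u^{\ast}$ be its symmetric decreasing rearrangement. Then $\|u^{\ast}\|_2=\|u\|_2$ and $\int_{\R^N}|u^{\ast}|^{p+1}\,dx=\int_{\R^N}|u|^{p+1}\,dx$, while the P\'olya--Szeg\H{o} inequality gives $\int_{\R^N}|\nabla u^{\ast}|^2\,dx\le\int_{\R^N}|\nabla u|^2\,dx$ and, since $(u^2)^{\ast}=(u^{\ast})^2$, also $\int_{\R^N}|\nabla(u^{\ast})^2|^2\,dx\le\int_{\R^N}|\nabla u^2|^2\,dx$. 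Therefore ${\mathcal E}(u^{\ast})\le{\mathcal E}(u)=m(c)$, so $u^{\ast}$ is again a minimizer and all the displayed inequalities are equalities; in particular equality holds in the P\'olya--Szeg\H{o} inequality for $u$. Since $u^{\ast}$ is itself a positive $C^2$ solution of \eqref{gs1}, an ODE argument on its radial profile shows it is strictly decreasing away from the origin, so the set $\{x:\nabla u^{\ast}(x)=0,\ 0<u^{\ast}(x)<\max u^{\ast}\}$ is Lebesgue negligible, and the Brothers--Ziemer characterization of the equality case yields that $u$ equals, up to a translation, the radial function $u^{\ast}$. Re-centering the origin there, $|v|$ is radially symmetric and decreasing, and combined with the previous step we conclude $v(x)=e^{i\theta}|v(|x|)|$. (Alternatively the radial symmetry can be read off directly from the symmetry results invoked in Step~IV of Theorem~\ref{ex}, applied to the equation \eqref{exis1} solved by $r^{-1}(|v|)$.)

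\emph{The main obstacle.} The reduction to $X$ and the rigidity of the complex phase are routine, being essentially transcribed from Theorem~\ref{ex}. The genuinely delicate point is the equality case of the P\'olya--Szeg\H{o} inequality: one must rule out ``partially rearranged'' minimizers, which is possible only after establishing the critical-set condition for $u^{\ast}$ --- and this is precisely where the regularity and strict positivity of minimizers, obtained from the Euler--Lagrange equation \eqref{gs1} via its dual semilinear form \eqref{exis1}, are used. That is the step I would write out with the most care.
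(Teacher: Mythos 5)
Your proposal is correct, and its first half (reduction to $|v|$ via the pointwise inequality $|\nabla|v||\le|\nabla v|$, equality a.e.\ by minimality, then the identity $\Re(i\bar v\nabla v)=0$ giving a constant phase) is essentially the paper's own argument, transcribed from Step~III of Theorem~\ref{ex}; you are in fact a bit more careful than the paper in justifying the positivity of $|v|$ needed to make the phase constant. Where you genuinely diverge is the radial symmetry. The paper disposes of it in one line by invoking \cite[Theorem 2]{Ma} (Maris's symmetry result for minimizers under a single constraint), which is tailored to exactly this situation: it yields radial symmetry by a reflection/hyperplane argument, requires no rearrangement structure, no equality-case analysis, and no a priori regularity or strict monotonicity of the minimizer. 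You instead run the Schwarz symmetrization route: symmetrize $u=|v|$, use P\'olya--Szeg\H{o} for both $\int|\nabla u|^2$ and $\int|\nabla u^2|^2$ (the latter legitimate since $(u^\ast)^2=(u^2)^\ast$; cf.\ the generalized inequality \cite{HaSq} already used in Lemma~\ref{weak-convergence}), deduce equality from minimality, and then appeal to the Brothers--Ziemer rigidity theorem, whose critical-set hypothesis you verify through the Euler--Lagrange equation (Lemma~\ref{lagrange-stat}), the dual semilinear form \eqref{exis1}, and an ODE argument for strict radial decrease. This is workable --- the ODE uniqueness argument on the dual equation does rule out flat annuli, so the critical-set condition can be checked --- but it is considerably heavier machinery than the paper needs, and it makes the symmetry conclusion depend on regularity, positivity and the delicate equality case of P\'olya--Szeg\H{o}, precisely the points you flag as the crux; Maris's theorem buys you the same conclusion with none of these prerequisites. (A small point either way: both routes give symmetry about \emph{some} point, so the normalization $v(x)=e^{i\theta}|v(|x|)|$ is to be read up to a translation, as in the paper.)
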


\begin{proof}
    The proof has some similarities with the final part of the proof of
    Theorem~\ref{ex} so we will be brief here. Let $X$ denote again the
    restriction of $X_\C$ to real-valued functions. We set
\begin{equation*}
\sigma_\C=\inf\big\{{\mathcal E}(v): v\in X_\C,
\,\|v\|_2^2=c \big\},\quad
\sigma_\R=\inf\big\{{\mathcal E}(v): v\in X, \|v\|_2^2=c \big\}.
\end{equation*}
Let us prove that $\sigma_\C=\sigma_\R$. Trivially one has
$\sigma_\C\leq \sigma_\R$, since $X\subset X_\C$. Moreover, if
$v\in X_\C$, we see using~\eqref{semi} that ${\mathcal E}(|v|)\leq
{\mathcal E}(v)$. In particular, we conclude that
$\sigma_\R\leq\sigma_\C$, yielding the desired equality
$\sigma_\C=\sigma_\R$. Now let $v\in X_\C$ be a solution to
$\sigma_\C$ and assume by contradiction that the Lebesgue measure
${\mathcal L}^N$ of the set $\{x\in\R^N:|\nabla |v|(x)|<|\nabla
v(x)|\}$ is positive. Then, of course,
$\||v|\|_{2}^2=\|v\|_{2}^2=c$, and
\begin{align*}
\sigma_\R & \leq \frac{1}{2}\int_{\R^N} |\nabla |v||^2 dx +
\int_{\R^N}
|v|^2 |\nabla |v||^2 dx - \frac{1}{p+1}\int_{\R^N} |v|^{p+1}dx \\
&< \frac{1}{2}\int_{\R^N} |\nabla v|^2 dx +  \int_{\R^N} |v|^2
|\nabla |v||^2 dx - \frac{1}{p+1}\int_{\R^N}
|v|^{p+1}dx=\sigma_\C,
\end{align*}
contradicting equality $\sigma_\C=\sigma_\R$. Hence, we have
$|\nabla |v(x)||=|\nabla v(x)|$ for a.e.\ $x\in\R^N$ and as in the
proof of Theorem~\ref{ex} this gives the existence of $\theta\in
{\mathbb S}^1$ such that $v=e^{i\theta}|v|$. Finally the result of
radial symmetry is a direct consequence of~\cite[Theorem 2]{Ma}.
\end{proof}

From Proposition~\ref{complex} we deduce that it is sufficient to study the
(real) minimization problem
\begin{equation}
    \label{0.1}
\mbox{ minimize} \quad {\mathcal E} \quad \mbox{ on } \quad
\|u\|_{2}^2= c \quad \mbox{with } u \in X.
\end{equation}
for a positive number $c$. We set
\begin{equation}
    \label{defvalcbis}
m(c)=\inf \{{\mathcal E}(u):\, \text{$u\in X$,\, $\|u\|_{2}^2 =
c$}\}.
\end{equation}

\begin{lemma}
    \label{boundedness} We have
    \begin{enumerate}
        \item Assume that $1< p < 3 + \frac{4}{N}$. Then $m(c) > - \infty$ for any
        $c>0$. In addition if $(u_n) \subset X$ is any
minimizing sequence for problem~\eqref{0.1} then $(u_n)$ is
bounded in $X$ and the sequence
\begin{equation}
    \label{bddquasilsequence}
\int_{\R^N}|u_n|^2|\nabla u_n|^2 dx =
\frac{1}{4}\int_{\R^N}|\nabla (u_n^2)|^2 dx
\end{equation}
is bounded in $\R$.
            \vskip4pt
            \item In the case $p= 3+\frac{4}{N}$ the
same conclusions hold provided that $c>0$ is sufficiently small.
        \vskip4pt
        \item Assume that $3+ \frac{4}{N} < p <  \frac{4N}{N-2}$. Then $m(c)=-\infty $ for any $c>0$.
        \end{enumerate}
\end{lemma}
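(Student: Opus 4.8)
The plan is to reduce parts (1) and (2) to a single Gagliardo--Nirenberg inequality adapted to the quasi-linear term, and to dispose of part (3) by an $L^2$-invariant dilation. Since ${\mathcal E}$ and the constraint depend on $u\in X$ only through $|u|$ and $|\nabla u|$, and $|\nabla|u||=|\nabla u|$ a.e.\ for real $u$, I would set $v=u^2\ge 0$; for $u\in X$ one has $\nabla v=2u\nabla u\in L^2(\R^N)$ with $\|\nabla v\|_2^2=4\int_{\R^N}u^2|\nabla u|^2dx$ and $\|v\|_1=\|u\|_2^2=c$, and moreover $v\in L^2(\R^N)$ (for $N\ge 3$ this follows from $v=u^2\in\dot H^1\hookrightarrow L^{2^*}$, hence $u\in L^4$; for $N\le 2$ it is immediate from $u\in H^1$), so $v\in H^1(\R^N)$. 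Applying the classical inequality $\|v\|_q\le K\|\nabla v\|_2^{\theta}\|v\|_1^{1-\theta}$ with $q=(p+1)/2$ --- legitimate because $1<p<\frac{3N+2}{N-2}$ forces the interpolation exponent $\theta$ to lie in $(0,1)$ --- and rewriting everything in terms of $u$, I would arrive at
$$
\int_{\R^N}|u|^{p+1}dx\le K(N,p)\Big(\int_{\R^N}u^2|\nabla u|^2dx\Big)^{\beta}\|u\|_2^{2\gamma},\qquad
\beta=\frac{N(p-1)}{2(N+2)},\quad \gamma=\frac{p+1}{2}-\frac{N(p-1)}{N+2}.
$$
The arithmetic facts that drive the whole lemma are that $\beta<1$, $\beta=1$, $\beta>1$ according as $p<3+\frac4N$, $p=3+\frac4N$, $p>3+\frac4N$, and that $\gamma>0$ on the whole admissible range of $p$.

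For part (1), i.e.\ $\beta\in(0,1)$, Young's inequality turns the above into $\frac{1}{p+1}\int_{\R^N}|u|^{p+1}dx\le\frac14\int_{\R^N}u^2|\nabla u|^2dx+C_1(c)$, where $C_1(c)$ depends only on $c,N,p$; hence
$$
{\mathcal E}(u)\ge\frac12\int_{\R^N}|\nabla u|^2dx+\frac34\int_{\R^N}u^2|\nabla u|^2dx-C_1(c)\ge-C_1(c),
$$
which gives $m(c)>-\infty$ (finiteness from above being clear by testing with a suitably rescaled function in $C_0^\infty$). If $(u_n)\subset X$ is minimizing then ${\mathcal E}(u_n)$ is bounded, so the displayed bound controls both $\int_{\R^N}|\nabla u_n|^2dx$ and $\int_{\R^N}u_n^2|\nabla u_n|^2dx$; together with $\|u_n\|_2^2=c$ this yields boundedness of $(u_n)$ in $X$ and, by \eqref{bddquasilsequence}, boundedness of the indicated sequence. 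Part (2), $\beta=1$, is identical except that the inequality now reads $\frac{1}{p+1}\int_{\R^N}|u|^{p+1}dx\le K(N,p)c^{\gamma}\int_{\R^N}u^2|\nabla u|^2dx$; since $\gamma>0$, choosing $c$ small enough that $K(N,p)c^{\gamma}\le\frac14$ again gives ${\mathcal E}(u)\ge\frac12\int_{\R^N}|\nabla u|^2dx+\frac34\int_{\R^N}u^2|\nabla u|^2dx\ge 0$, and the same two conclusions follow.

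For part (3) no inequality is needed: fix any $u\in X$ with $\|u\|_2^2=c$ and $\int_{\R^N}|u|^{p+1}dx>0$ (for instance a multiple of a fixed nonnegative $C_0^\infty$ function), and consider the $L^2$-preserving rescaling $u_\lambda(x)=\lambda^{N/2}u(\lambda x)$. A change of variables gives
$$
{\mathcal E}(u_\lambda)=\frac{\lambda^2}{2}\int_{\R^N}|\nabla u|^2dx+\lambda^{N+2}\int_{\R^N}u^2|\nabla u|^2dx-\frac{\lambda^{N(p-1)/2}}{p+1}\int_{\R^N}|u|^{p+1}dx,
$$
and $p>3+\frac4N$ is precisely the statement that $\frac{N(p-1)}{2}>N+2>2$, so the last term dominates as $\lambda\to\infty$ and ${\mathcal E}(u_\lambda)\to-\infty$; hence $m(c)=-\infty$ for every $c>0$.

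The only steps requiring real care are in the first paragraph: checking that $v=u^2\in H^1(\R^N)$ for every $u\in X$ (so that Gagliardo--Nirenberg applies), that the interpolation exponent $\theta$ remains admissible on the range $1<p<\frac{3N+2}{N-2}$, and --- the conceptual heart of the lemma --- that the power $\beta$ of the quasi-linear term crosses the value $1$ exactly at $p=3+\frac4N$, which is what makes the trichotomy sharp and forces the smallness of $c$ at the critical exponent. Everything else is a routine application of Young's inequality together with the scaling identity.
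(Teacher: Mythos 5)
Your proof is correct and follows essentially the same route as the paper: the identical Gagliardo--Nirenberg-type bound $\int_{\R^N}|u|^{p+1}dx\le K\big(\int_{\R^N}|u|^2|\nabla u|^2dx\big)^{\beta}\|u\|_2^{2\gamma}$ with $\beta=\frac{N(p-1)}{2(N+2)}$ crossing $1$ exactly at $p=3+\frac{4}{N}$, and the same $L^2$-preserving dilation for part (3). The only differences are presentational: you package the inequality as an $L^1$-based Gagliardo--Nirenberg inequality applied to $v=u^2$ (which incidentally also covers $N=1,2$, whereas the paper writes the interpolation between $L^2$ and $L^{\frac{4N}{N-2}}$ plus Sobolev), and you absorb the nonlinear term by Young's inequality instead of first deducing boundedness of the quasi-linear term and then of $\|u_n\|_{p+1}$ and $\|\nabla u_n\|_2$.
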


\begin{proof}
Notice that, using H\"{o}lder and Sobolev inequalities we have for
$$
\theta = \frac{(p-1)(N-2)}{2(N+2)}
$$
and some $K>0$ depending only on $N$, that for any $u \in X$
\begin{align}
    \label{1.1}
    \int_{\R^N}|u|^{p+1} dx  &\leq   \Big( \int_{\R^N} |u|^2 dx\Big)^{1 -
    \theta}
    \Big( \int_{\R^N}|u|^{\frac{4N}{N-2}}dx\Big)^{\theta} \nonumber \\
    & \leq  K \Big( \int_{\R^N} |u|^2 dx\Big)^{1 - \theta}
    \Big( \int_{\R^N}|u|^2 |\nabla u|^2 dx \Big)^{\frac{\theta
    N}{N-2}}.
\end{align}
Here we have used the fact that
$$
\int_{\R^N} |u|^{\frac{4N}{N-2}}dx = \int_{\R^N}
|u^2|^{\frac{2N}{N-2}}dx,\qquad \int_{\R^N} |\nabla(u^2)|^2dx = 4
\int_{\R^N} |u|^2 |\nabla u|^2 dx.
$$
From~\eqref{1.1} we get that
$$
{\mathcal E}(u) \geq \int_{\R^N}|u|^2|\nabla u|^2 dx -
\frac{1}{p+1}K c^{1- \theta} \Big( \int_{\R^N}|u|^2 |\nabla u|^2
dx \Big)^{\frac{\theta N}{N-2}}.
$$
If we assume that $p < 3 +  \frac{4}{N}$, we see that
$\frac{\theta N}{N-2} <1$ and thus the
sequence~\eqref{bddquasilsequence} is bounded in $\R$.
From~\eqref{1.1} we then got that $(\|u_n\|_{p+1})$ is bounded and
thus also that $(\|\nabla u_n\|_2)$ is bounded. This proves Point
(1). In the limit case $p= 3 +  \frac{4}{N}$ we still reach the
boundedness result for any positive $c$ such that $Kc^{1-
\theta}<p+1$, where $K,\theta>0$ are the numbers introduced in the
proof. Now for point (3) we fix $c >0$ and take
    $u \in X$ such that $\|u\|_2^2 = c$. Then, considering the scaling,
$$
\sigma \mapsto u^\sigma(x)=\sigma^{\frac{N}{2}}u(\sigma x),
$$
we get, for all $\sigma>0$,
$$
\int_{\R^N}|u^\sigma|^2 dx =  \int_{\R^N}|u|^2 dx = c,\qquad
\int_{\R^N}|\nabla u^\sigma|^2 dx =  \sigma^2 \int_{\R^N}|\nabla
u|^2 dx,
$$
$$
\int_{\R^N}|u^\sigma|^{p+1} dx =
\sigma^{\frac{N(p-1)}{2}}\int_{\R^N}|u|^{p+1} dx,\qquad
\int_{\R^N}|u^\sigma|^2 |\nabla u^\sigma|^2 dx =  \sigma^{(N+2)}
\int_{\R^N}|u|^2 |\nabla u|^2 dx.
$$
Thus $\|u^\sigma\|_2^2 = c$ for all $\sigma>0$ and
$$
{\mathcal E}(u^\sigma) = \frac{\sigma^2}{2}\int_{\R^N}|\nabla u|^2
dx + \sigma^{(N+2)} \int_{\R^N}|u|^2 |\nabla u|^2 dx -
\frac{\sigma^{\frac{N(p-1)}{2}}}{p+1}\int_{\R^N}|u|^{p+1} dx.
$$
Now just notice that, in the range $3+ \frac{4}{N} < p <
\frac{4N}{N-2}$ the dominant term is
$$
\frac{\sigma^{\frac{N(p-1)}{2}}}{p+1}\int_{\R^N}|u|^{p+1} dx.
$$ Thus ${\mathcal E}(u^\sigma) \to - \infty$ as $\sigma\to+\infty$.
This concludes the proof of (3).
\end{proof}

Concerning the existence of a minimizer we first show

\begin{lemma}
    \label{weak-convergence}
Assume that $1 < p < 3 + \frac{4N}{N-2}$. The following facts
hold.
\begin{enumerate}
\item If $u_n \rightharpoonup u$ in $X$ then setting
$$
T(u) = \frac{1}{2}\int_{\R^N}|\nabla u|^2 dx + \int_{\R^N}|u|^2
|\nabla u|^2 dx,
$$ we have
$$
T(u) \leq \liminf_{n \to \infty}T(u_n).
$$
\item For any $u \in X$  there exists a
Schwarz symmetric function $u^* \in X$ satisfying
$$
T(u^*) \leq T(u), \quad \int_{\R^N}|u^*|^2 dx =
\int_{\R^N}|u|^2dx, \quad \int_{\R^N}|u^*|^{p+1} dx =
\int_{\R^N}|u|^{p+1} dx.
$$
\item Let $(u_n) \subset X$ be a minimizing sequence for~\eqref{0.1}
of Schwartz symmetric functions  satisfying $u_n \rightharpoonup u
$ in $X$. Then we have
\begin{equation}
    \label{lowsemicconcl}
{\mathcal E}(u) \leq \liminf_{n \to \infty }{\mathcal E}(u_n) =
m(c).
\end{equation}
\end{enumerate}
\end{lemma}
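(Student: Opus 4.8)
The plan is to establish the three claims of Lemma~\ref{weak-convergence} in order, since the third is essentially a corollary of the first and the concentration-compactness principle. For claim (1), I would write $T(u)$ as a sum of two functionals: the Dirichlet term $\frac12\int|\nabla u|^2$, which is weakly lower semicontinuous on $H^1(\R^N)$ by the standard convexity argument (a norm is w.l.s.c.), and the quasi-linear term $\int|u|^2|\nabla u|^2 = \frac14\int|\nabla(u^2)|^2$. For the second term the key observation is that $u_n\rightharpoonup u$ in $X$ forces $u_n\to u$ strongly in $L^q_{\rm loc}$ for $q<2^*$ (Rellich) and a.e.\ after passing to a subsequence, hence $u_n^2\to u^2$ a.e.\ and $u_n^2$ is bounded in the space $\{w:\nabla w\in L^2\}$; thus $\nabla(u_n^2)\rightharpoonup \nabla(u^2)$ in $L^2$ and $\int|\nabla(u^2)|^2\le\liminf\int|\nabla(u_n^2)|^2$ by w.l.s.c.\ of the $L^2$-norm again. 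Summing the two liminf inequalities gives $T(u)\le\liminf T(u_n)$.

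For claim (2), I would invoke the properties of Schwarz symmetrization: the rearrangement $u^*$ preserves all $L^q$ norms, so $\|u^*\|_2=\|u\|_2$ and $\|u^*\|_{p+1}=\|u\|_{p+1}$; the P\'olya--Szeg\H{o} inequality gives $\int|\nabla u^*|^2\le\int|\nabla u|^2$; and for the quasi-linear term one uses that $(u^*)^2=(u^2)^*$ (rearrangement commutes with the monotone map $t\mapsto t^2$ on nonnegative functions), so that $\int|\nabla((u^*)^2)|^2=\int|\nabla((u^2)^*)|^2\le\int|\nabla(u^2)|^2$, again by P\'olya--Szeg\H{o} applied to $u^2$. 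One must check $u^2\in H^1$, which holds since $u\in X$ means $|u||\nabla u|\in L^2$ and $u\in L^2$. Adding the two inequalities yields $T(u^*)\le T(u)$.

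For claim (3), given a minimizing sequence of Schwarz symmetric functions with $u_n\rightharpoonup u$ in $X$, I would split ${\mathcal E}(u_n)=T(u_n)-\frac1{p+1}\int|u_n|^{p+1}$. By claim (1), $T(u)\le\liminf T(u_n)$. For the nonlinear term, since the $u_n$ are radially decreasing and bounded in $H^1$, the compactness lemma of Strauss (radial functions embed compactly into $L^{p+1}(\R^N)$ for $1<p+1<2^*$) gives $u_n\to u$ strongly in $L^{p+1}$, hence $\int|u_n|^{p+1}\to\int|u|^{p+1}$. Combining, ${\mathcal E}(u)\le\liminf T(u_n)-\frac1{p+1}\lim\int|u_n|^{p+1}=\liminf{\mathcal E}(u_n)=m(c)$, which is \eqref{lowsemicconcl}.

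The main obstacle is the quasi-linear term $\int|u|^2|\nabla u|^2$ in claim (1): it is not simply a weakly lower semicontinuous function of $u\in H^1$, and one cannot directly apply convexity to it as a functional of $u$ and $\nabla u$ jointly. The resolution, as indicated above, is the change of variable to $w=u^2$, which turns it into $\frac14\|\nabla w\|_2^2$; but one must justify that weak convergence $u_n\rightharpoonup u$ in $X$ implies $\nabla(u_n^2)\rightharpoonup\nabla(u^2)$ weakly in $L^2$ (not merely boundedness), and this requires identifying the weak limit via the a.e.\ convergence $u_n^2\to u^2$ together with the uniform $L^2$ bound on $\nabla(u_n^2)$. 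A secondary point to be careful about in claim (2) is that $u^*$ genuinely lies in $X$, i.e.\ that $|u^*||\nabla u^*|\in L^2$; this follows from $\int|u^*|^2|\nabla u^*|^2=\frac14\int|\nabla((u^2)^*)|^2<\infty$.
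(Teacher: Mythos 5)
Your treatment of points (1) and (2) is correct, though by a different route than the paper: for (1) the paper simply invokes Ioffe's lower semicontinuity theorem for the integrand $j(s,\xi)=\tfrac12\xi^2+s^2\xi^2$, convex in $\xi$ (\cite{Io1,Io2}), and for (2) it cites the generalized P\'olya--Szeg\H{o} inequality of \cite{HaSq}; your substitution $w=u^2$ with identification of the weak $L^2$ limit of $\nabla(u_n^2)$, and the splitting of $T$ together with $(u^*)^2=(u^2)^*$, accomplish the same for this particular functional and are more self-contained. (A small imprecision: for large $N$ one does not have $u^2\in L^2$ from $u\in L^2$, but P\'olya--Szeg\H{o} only requires $u^2\geq 0$ vanishing at infinity with $\nabla(u^2)\in L^2$, so nothing breaks.)

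There is, however, a genuine gap in your point (3): you deduce $\int|u_n|^{p+1}\to\int|u|^{p+1}$ from Strauss's compact embedding of radial $H^1$ functions into $L^{p+1}$, which requires $2<p+1<\frac{2N}{N-2}$. Here $p+1$ may exceed the $H^1$-critical exponent: under the lemma's hypothesis $p<3+\frac{4N}{N-2}$ this happens for every $N\geq 3$, and even in the range $1<p<3+\frac4N$ actually used later it happens as soon as $N\geq 4$ (e.g.\ $N=4$, $3<p<4$ gives $p+1>4=2^*$). In that regime $H^1_{\rm rad}$ does not even embed continuously into $L^{p+1}$, so the step fails as written. The missing ingredient is the quasilinear bound: since $(u_n)$ is a minimizing sequence, Lemma~\ref{boundedness} gives $\sup_n\int_{\R^N}|\nabla(u_n^2)|^2\,dx<\infty$, hence by Sobolev $\sup_n\|u_n^2\|_{\frac{2N}{N-2}}<\infty$, i.e.\ $\sup_n\|u_n\|_{\frac{4N}{N-2}}<\infty$; combining this $L^{\frac{4N}{N-2}}$ bound with the uniform decay at infinity of radially decreasing functions (radial Lemma A.IV of \cite{bere2}) one obtains $u_n\to u$ strongly in $L^{p+1}(\R^N)$ for all $2<p+1<\frac{4N}{N-2}$, which covers the required range. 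This is exactly how the paper argues; with this replacement your scheme for \eqref{lowsemicconcl} goes through.
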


\begin{proof} Point (1) is standard. Defining $j:[0,\infty) \times [0, \infty) \to \R$ by $j(s, \xi) =
\frac{1}{2}\xi^2 + s^2 \xi^2,$ then $\{ \xi \to j(s, \xi)\}$ is
convex and thus the result follows from classical results of A.
Ioffe (see e.g.\ \cite{Io1,Io2}). Concerning assertion (2) all we
need to prove is $T(u^*) \leq T(u)$, which follows
from~\cite[Corollary 3.3]{HaSq}. For Point (3), we claim that
\begin{equation}\label{strong-convergence}
 \int_{\R^N} |u_n|^{p+1} dx \to \int_{\R^N}|u|^{p+1} dx
\end{equation}
as $n\to\infty$. In fact, since $(u_n) \subset X$ is minimizing we
have, by Lemma~\ref{boundedness}, Point (1) that $\nabla (u_n^2)$
is uniformly bounded in $L^2(\R^N)$ and thus by the Sobolev
embedding $\sup_{n\in\N}\|u_n^2\|_{\frac{2N}{N-2}}<\infty$, which
gives $\sup_{n\in\N}\|u_n\|_{\frac{4N}{N-2}}<\infty$. Now, using
the fact that $(u_n) \subset X$ consists of radial decreasing
functions, from the radial Lemma A.IV of~\cite{bere2}, we deduce
that $(u_n)$ has a uniform decay at infinity (with respect to both
$n\in \N$ and $|x|$) and this shows, by standard argument,
that~\eqref{strong-convergence} hold. Now we conclude observing
that, from point (1),  $T(u) \leq \liminf_{n \to \infty} T(u_n)$.
\end{proof}

We now prove the existence of a minimizer for problem~\eqref{0.1}.

\begin{lemma}\label{general-minimizer}
Assume that $1< p < 3 + \frac{4}{N}$ and let $c >0$ be such that
$m(c)<0$. Then the problem~\eqref{0.1} admits a minimizer which is
Schwartz symmetric.
\end{lemma}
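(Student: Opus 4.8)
The plan is to run the standard concentration-compactness argument of P.-L.~Lions for the minimization problem~\eqref{0.1}, exploiting two features that are by now available to us: the autonomous (translation-invariant) nature of $\mathcal E$, and the fact that by Lemma~\ref{weak-convergence}(2) we may restrict attention to Schwarz symmetric minimizing sequences, for which compactness is much easier. First I would fix $c>0$ with $m(c)<0$ and pick a minimizing sequence $(u_n)\subset X$ with $\|u_n\|_2^2=c$ and $\mathcal E(u_n)\to m(c)$; replacing each $u_n$ by its Schwarz symmetrization $u_n^*$ (which preserves the $L^2$ and $L^{p+1}$ norms and does not increase $T$, hence does not increase $\mathcal E$) we may assume all $u_n$ are radially symmetric and non-increasing. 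By Lemma~\ref{boundedness}(1), $(u_n)$ is bounded in $X$ — that is, $(\nabla u_n)$ is bounded in $L^2$ and $(\,|u_n|\,|\nabla u_n|\,)$ is bounded in $L^2$ — so up to a subsequence $u_n\rightharpoonup u$ weakly in $H^1(\R^N)$ and the quantities $\int|u_n|^2|\nabla u_n|^2\,dx$ converge along a further subsequence.

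The key step is to show $u\neq 0$, and here the Schwarz symmetry does all the work. Since $m(c)<0$ and $\mathcal E(u_n)\to m(c)$, for $n$ large we have
\[
\frac{1}{p+1}\int_{\R^N}|u_n|^{p+1}\,dx \;\geq\; \frac12\int_{\R^N}|\nabla u_n|^2\,dx+\int_{\R^N}|u_n|^2|\nabla u_n|^2\,dx-m(c)\;\geq\;-\frac{m(c)}{2}>0,
\]
so $\int|u_n|^{p+1}\,dx$ is bounded below by a positive constant. As noted in the proof of Lemma~\ref{weak-convergence}(3), the bound on $\nabla(u_n^2)$ in $L^2$ together with the Sobolev embedding gives $\sup_n\|u_n\|_{4N/(N-2)}<\infty$, and the radial decreasing character yields, via the radial Lemma~A.IV of~\cite{bere2}, a uniform pointwise decay $|u_n(x)|\leq C|x|^{-N/2}$ for $|x|\ge 1$. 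Combining this uniform decay with the lower bound on $\int|u_n|^{p+1}\,dx$ forces the mass of $u_n$ to stay in a fixed ball, so by the compact embedding $H^1_{\mathrm{rad}}(\R^N)\hookrightarrow L^{p+1}(\R^N)$ (for $1<p+1<2^*$, or equivalently using the uniform decay plus local strong $L^{p+1}$ convergence on balls) we get $\int|u_n|^{p+1}\,dx\to\int|u|^{p+1}\,dx>0$; in particular $u\neq 0$.

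Next I would identify $u$ as a minimizer. By Lemma~\ref{weak-convergence}(3) (whose proof establishes exactly~\eqref{strong-convergence} and the lower semicontinuity of $T$) we have
\[
\mathcal E(u)\;=\;T(u)-\frac{1}{p+1}\int_{\R^N}|u|^{p+1}\,dx\;\leq\;\liminf_{n\to\infty}\Big(T(u_n)-\frac{1}{p+1}\int_{\R^N}|u_n|^{p+1}\,dx\Big)=m(c).
\]
Set $c':=\|u\|_2^2$; by weak lower semicontinuity of the $L^2$ norm, $0<c'\leq c$. To conclude $c'=c$ and $\mathcal E(u)=m(c)$ I would invoke the strict subadditivity of $c\mapsto m(c)$ on $(0,c]$: the scaling $u\mapsto t^{N/2}u$ multiplies $\|u\|_2^2$ by $t^N$ while $\mathcal E(t^{N/2}u)=\tfrac{t^2}{2}\int|\nabla u|^2+t^{N+2}\int|u|^2|\nabla u|^2-\tfrac{t^{N(p-1)/2}}{p+1}\int|u|^{p+1}$, from which (using $m(c)<0$, so the negative term dominates appropriately for $t>1$) one deduces $m(\mu c)<\mu\, m(c)$ for $\mu>1$, and by homogeneity $m(\lambda c')<\tfrac{c}{c'}\,m(c')$ if $c'<c$. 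Were $c'<c$, combining $m(c)\le m(c')\cdot$(scaling lower bound) with the strict inequality would contradict $\mathcal E(u)\leq m(c)$ together with $\mathcal E(u)\ge m(c')$ (here one must check $m(c')<0$, which follows because $\mathcal E(u)\le m(c)<0$). Hence $c'=c$, $u\in X$ with $\|u\|_2^2=c$, and $\mathcal E(u)=m(c)$, so $u$ is the desired Schwarz symmetric minimizer. The main obstacle is the $c'=c$ step: strict subadditivity for this quasi-linear energy is slightly more delicate than in the semilinear case because of the competing scaling exponents $2$, $N+2$ and $N(p-1)/2$, so some care is needed to extract the strict inequality — but the sign condition $m(c)<0$ is precisely what makes it go through.
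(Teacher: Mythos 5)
Your overall route is the same as the paper's: symmetrize the minimizing sequence via Lemma~\ref{weak-convergence}(2), get boundedness from Lemma~\ref{boundedness}(1), pass to a weak limit $u$, obtain ${\mathcal E}(u)\le m(c)$ from Lemma~\ref{weak-convergence}(3), and then exclude $\|u\|_2^2<c$ by a scaling argument. Two remarks on the first half. Your long argument for $u\neq 0$ is superfluous: once you have ${\mathcal E}(u)\le m(c)<0$ from Lemma~\ref{weak-convergence}(3), $u\neq 0$ is immediate. Moreover your primary justification there, the compact embedding of radial $H^1$ functions into $L^{p+1}$, is not available in the whole admissible range: for $N\ge 4$ one can have $p+1>\frac{2N}{N-2}$ while $p<3+\frac{4}{N}$, so only your hedged alternative (uniform decay of radial decreasing functions plus the $L^{\frac{4N}{N-2}}$ bound coming from the quasilinear term, i.e.\ exactly the proof of \eqref{strong-convergence} in Lemma~\ref{weak-convergence}(3)) is valid.

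The genuine gap is in the final step, which you yourself flag as the main obstacle. The scaling you display, with exponents $t^2$, $t^{N+2}$, $t^{N(p-1)/2}$, is the $L^2$-invariant rescaling $t^{N/2}u(t\,\cdot)$, which does \emph{not} multiply $\|u\|_2^2$ by $t^N$; it leaves it unchanged, so it cannot produce any relation between $m(\mu c)$ and $\mu\, m(c)$. If instead you literally multiply by the constant $t^{N/2}$, the correct exponents are $t^{N}$, $t^{2N}$, $t^{N(p+1)/2}$, and then the quasilinear term grows fastest, so again no strict sub-homogeneity follows. In addition, proving $m(\lambda d)<\lambda m(d)$ as a property of the value function at $d=c'$ would, as you set it up, need either a minimizer at level $c'$ (circular at this stage) or an extra argument along minimizing sequences that you do not supply. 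The repair is simpler and bypasses the value function entirely, and it is what the paper does: if $0<\lambda:=\|u\|_2^2<c$, set $v(x)=u(\sigma^{-1/N}x)$ with $\sigma=c/\lambda>1$. Then $\|v\|_2^2=c$ and, with $T$ as in Lemma~\ref{weak-convergence}, ${\mathcal E}(v)=\sigma^{1-2/N}T(u)-\frac{\sigma}{p+1}\int_{\R^N}|u|^{p+1}dx\le\sigma\,{\mathcal E}(u)<{\mathcal E}(u)\le m(c)$, using $T(u)\ge 0$, $\sigma^{-2/N}\le 1$, $\sigma>1$ and ${\mathcal E}(u)\le m(c)<0$; this contradicts the definition of $m(c)$. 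With this replacement (and noting that the weak limit of Schwarz symmetric functions is Schwarz symmetric), your proof closes and coincides with the paper's.
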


\begin{proof}
    Let $(u_n)$ be a minimizing sequence for (\ref{0.1}).
By Lemma~\ref{weak-convergence} we know that $(u_n) \subset X$ can
be replaced by a minimizing sequence $(u_n^*) \subset X$ of
Schwarz symmetric functions such that $u_n^* \rightharpoonup u^*$
and
\begin{equation}
    \label{weak-limit}
{\mathcal E}(u^*) \leq \liminf_{n \to \infty}{\mathcal E}(u_n^*) =
m(c).
\end{equation}
We still denote $u^*$ by $u$. To conclude we just need to prove
that $\|u\|_2^2 = c$. Since ${\mathcal E}(u) \leq m(c) <0$
necessarily  $u\neq 0$. Assume thus that $0 < \|u\|_2^2 = \lambda
< c$ and consider the scaling $v(x) = u(\sigma^{- \frac{1}{N}}x)$
for $\sigma>1$. Then $\|v\|_2^2 = \sigma \lambda$ and for $\sigma
= \frac{c}{\lambda}$ we have $\|v\|_2^2 = c$. Now we also get that
$$
{\mathcal E}(v) = \sigma^{1- \frac{2}{N}} \left[
\int_{\R^N}\frac{1}{2}|\nabla u|^2 + |\nabla u|^2 |u|^2 dx \right]
- \frac{\sigma}{p+1} \int_{\R^N}|u|^{p+1}dx.
$$
Thus, since $\sigma>1$ and ${\mathcal E}(u) <0$ we conclude that
${\mathcal E}(v) < {\mathcal E}(u)$, which is a contradiction.
This proves that $\|u\|_2^2 = c$ and thus~\eqref{0.1} admits a
minimizer. Finally, observe that, since $\|u_n^*\|_{p+1}
\to\|u^*\|_{p+1}$ as $n\to\infty$, necessarily $\|\nabla u_n^*\|_2
\to \|\nabla u^*\|_2$ as $n\to\infty$ and we deduce that the
Schwarz symmetric sequence strongly converges to $u^* \in X$.
\end{proof}
\medskip

We now start to discuss the condition $m(c) <0$.

\begin{lemma}\label{negative}
We have
\begin{enumerate}
        \item Assume that $1< p < 1 + \frac{4}{N}$. Then $m(c) <0$ for any
        $c>0$.
        \vskip4pt
        \item Assume that $1 + \frac{4}{N} \leq  p < 3 + \frac{4}{N}$. Then $
         m(c) \leq 0$ for any $c>0$. This inequality also hold
        if $p= 3 + \frac{4}{N}$ and $c>0$ is small.
        \vskip4pt
        \item Assume that $1 + \frac{4}{N} \leq p <  3 +\frac{4}{N}$. Then
        there exists a $c>0$, sufficiently large, such that $m(c) <0$.
        \end{enumerate}
\end{lemma}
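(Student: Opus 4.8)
The strategy throughout is to evaluate $\mathcal{E}$ on $L^2$-norm‑preserving deformations of a fixed test function and read off the sign.

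For parts (1) and (2), fix any $u\in X\setminus\{0\}$ with $\|u\|_2^2=c$ and use the $L^2$-invariant rescaling $u^\sigma(x)=\sigma^{N/2}u(\sigma x)$, which also satisfies $\|u^\sigma\|_2^2=c$ and
\[
{\mathcal E}(u^\sigma)=\frac{\sigma^2}{2}\int_{\R^N}|\nabla u|^2dx
+\sigma^{N+2}\int_{\R^N}|u|^2|\nabla u|^2dx
-\frac{\sigma^{N(p-1)/2}}{p+1}\int_{\R^N}|u|^{p+1}dx .
\]
For part (1), since $1<p<1+\frac{4}{N}$ we have $\frac{N(p-1)}{2}<2<N+2$; dividing by $\sigma^{N(p-1)/2}$ and letting $\sigma\to 0^+$, the remaining bracket converges to $-\frac{1}{p+1}\|u\|_{p+1}^{p+1}<0$, so ${\mathcal E}(u^\sigma)<0$ for $\sigma$ small and hence $m(c)\le {\mathcal E}(u^\sigma)<0$. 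For part (2), since $p\le 3+\frac{4}{N}$ and $p>1$ all three exponents $2$, $N+2$, $\frac{N(p-1)}{2}$ are strictly positive, so ${\mathcal E}(u^\sigma)\to 0$ as $\sigma\to 0^+$; since $m(c)\le{\mathcal E}(u^\sigma)$ for every $\sigma>0$, passing to the limit gives $m(c)\le 0$. (The restriction to small $c$ when $p=3+\frac{4}{N}$ is only to stay within the regime of Lemma~\ref{boundedness} where $m(c)>-\infty$; the computation is unchanged.)

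Part (3) needs a genuinely two‑parameter family, since for $N\ge 3$ and $p$ near $1+\frac{4}{N}$ neither pure dilation nor pure amplitude scaling yields negative energy. I would fix $w\in X\setminus\{0\}$ and, for $\mu,\lambda>0$, set $u_{\mu,\lambda}(x)=\mu\,w(\lambda x)$; the constraint $\|u_{\mu,\lambda}\|_2^2=c$ forces $\mu^2=c\lambda^N\|w\|_2^{-2}$, and substituting one obtains positive constants $K_1,K_2,K_3$ depending only on $w,N,p$ with
\[
{\mathcal E}(u_{\mu,\lambda})=K_1\,c\,\lambda^{2}+K_2\,c^{2}\,\lambda^{N+2}-K_3\,c^{(p+1)/2}\,\lambda^{N(p-1)/2}.
\]
Taking $\lambda=c^{-\beta}$ turns this into $K_1c^{1-2\beta}+K_2c^{2-(N+2)\beta}-K_3\,c^{E(\beta)}$ with $E(\beta)=\frac{p+1}{2}-\frac{N(p-1)}{2}\beta$. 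It then suffices to choose $\beta>0$ making $E(\beta)$ strictly larger than both $1-2\beta$ and $2-(N+2)\beta$: factoring out $c^{E(\beta)}$ and letting $c\to\infty$, the bracket tends to $-K_3<0$, so ${\mathcal E}(u_{\mu,\lambda})<0$ for all large $c$, i.e.\ $m(c)<0$.

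Finally one must check such a $\beta$ exists for all $1+\frac{4}{N}\le p<3+\frac{4}{N}$. The condition $E(\beta)>1-2\beta$ is $\frac{p-1}{2}>\beta\,\frac{N(p-1)-4}{2}$; since $p\ge 1+\frac{4}{N}$ gives $N(p-1)-4\ge 0$, it holds for all $\beta>0$ when $p=1+\frac{4}{N}$ and for $0<\beta<\frac{p-1}{N(p-1)-4}$ otherwise. The condition $E(\beta)>2-(N+2)\beta$ is $\frac{p-3}{2}>\beta\,\frac{Np-3N-4}{2}$; since $p<3+\frac{4}{N}$ gives $Np-3N-4<0$, it holds for all $\beta>0$ when $p\ge 3$ and for $\beta>\frac{3-p}{3N+4-Np}$ when $p<3$. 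When $1+\frac{4}{N}<p<3$, a one‑line cross‑multiplication (the difference of the two cross‑products equals $8$) shows $\frac{3-p}{3N+4-Np}<\frac{p-1}{N(p-1)-4}$, so in every case the admissible $\beta$ form a nonempty open interval, and part (3) follows. I expect this exponent bookkeeping for part (3) to be the only delicate point; parts (1)–(2) are routine scaling computations.
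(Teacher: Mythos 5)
Your parts (1) and (2) coincide with the paper's argument: the $L^2$-invariant rescaling $u^\sigma(x)=\sigma^{N/2}u(\sigma x)$ and the observation that as $\sigma\to 0^+$ the $L^{p+1}$-term dominates when $p<1+\frac{4}{N}$, while all terms vanish when $p\geq 1+\frac{4}{N}$. Your part (3), however, takes a genuinely different route, and it is correct. The paper instead tests ${\mathcal E}$ on the explicit plateau functions $w_R$ (equal to $1$ on $B(0,R)$, linearly cut off on $R\leq |x|\leq R+1$): there $\|w_R\|_2^2$ and $\|w_R\|_{p+1}^{p+1}$ grow like $C_NR^N$ while both gradient terms are only $O(R^{N-1})$, supported on a thin shell, so ${\mathcal E}(w_R)\to-\infty$ as $\|w_R\|_2^2\to\infty$; this needs no case analysis in $p$ at all, the sign being decided purely by the order of growth in $R$. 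Your argument replaces this geometric construction by the two-parameter family $\mu\,w(\lambda x)$ with an arbitrary fixed $w\in X\setminus\{0\}$, reducing the matter to exponent bookkeeping in $c$ after setting $\lambda=c^{-\beta}$; I checked the computation of ${\mathcal E}(u_{\mu,\lambda})$, the two inequalities on $\beta$, and the cross-multiplication (the difference is indeed $8$), so the admissible interval of $\beta$ is nonempty throughout $1+\frac{4}{N}\leq p<3+\frac{4}{N}$ (the endpoint $p=1+\frac{4}{N}$ and the range $p\geq 3$ being the degenerate cases where one constraint is vacuous). What your version buys is that it works from any fixed test function and directly yields $m(c)<0$ for \emph{all} sufficiently large $c$ (slightly more than the lemma asserts, and something the paper only recovers later, in the proof of Theorem~\ref{no-minimizer}, via the scaling $v(x)=u(\sigma^{-1/N}x)$); what the paper's plateau construction buys is brevity and the absence of the delicate exponent comparison. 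Your parenthetical remark in part (2) about the case $p=3+\frac{4}{N}$ is also consistent with the paper, where the smallness of $c$ is only inherited from Lemma~\ref{boundedness} and plays no role in the limit $\sigma\to 0^+$.
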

\begin{proof}
For Points (1) and (2) we use the scaling introduced in the proof of
Lemma~\ref{boundedness}, Point (3). When $p < 1 + \frac{4}{N}$ we see that the
dominant term, as $\sigma \to 0^+$, is
$$
\frac{\sigma^{\frac{N(p-1)}{2}}}{p+1}\int_{\R^N}|u|^{p+1} dx,
$$
and this proves Point (1).  For Point (2), since ${\mathcal E}(u^\sigma)\to 0$
as $\sigma\to 0^+$, we have directly have that $ m(c) \leq 0$ for any $c>0$.
Now for Point (3) we consider, for a fixed $R
>0$, the radial function $w_R \in H^1(\R^N)$ defined by
$$
w_R(r):=
\begin{cases}
    1     & \text{if $r \leq R$}, \\
    1+ R -r   & \text{if $R \leq r \leq R+1 $},  \\
    0    & \text{if $r \geq R+1.$}
\end{cases}
$$
Integrating in radial coordinates, we have
$$
\int_{\R^N} |w_R(|x|)|^2 dx = C_N R^N + \varepsilon_1(R^{N-1}),
$$
where $\varepsilon_1(R^{N-1})/R^N \to 0 $, as $ R \to \infty.$
Also
$$
\int_{\R^N}|w_R(|x|)|^{p+1} dx =  C_N R^N +
\varepsilon_2(R^{N-1}),\qquad  \int_{\R^N}|\nabla w_R(|x|)|^{2} dx
= \varepsilon_3(R^{N-1}),
$$
and
$$
\int_{\R^N}|w_R(|x|)|^2|\nabla w_R(|x|)|^{2} dx =
\varepsilon_4(R^{N-1}),
$$
where $\varepsilon_i(R^{N-1})/R^N \to 0$, as $R \to \infty$, for
any $i= 2,3,4$. Thus letting $R \to \infty$ we have $\|w_R\|_2^2
\to +\infty$ and ${\mathcal E}(w_R)  \to - \infty$. This proves
Point (3).
\end{proof}

\vskip3pt\noindent In preparation to the proof of Theorem~\ref{no-minimizer} we
also show the following.

\begin{lemma}\label{lagrange-stat}
Assume that $1 < p < 3 + \frac{4}{N}$ and that $u_c \in X$ is a
minimizer of~\eqref{0.1} for some $c>0$. Then $u_c \in X$ weakly
satisfies
\begin{equation}
    \label{full-equation}
 - \Delta u_c - \lambda_c u_c - u_c\Delta |u_c|^2 =
|u_c|^{p-1}u_c
\end{equation}
with the  Lagrange multiplier $\lambda_c \in \R$ being strictly
negative.
\end{lemma}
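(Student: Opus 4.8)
The plan is to derive the Euler--Lagrange equation satisfied by $u_c$, to identify the Lagrange multiplier $\lambda_c$ explicitly as a combination of the energy integrals of $u_c$, and then to read off its sign from the Pohozaev identity of Lemma~\ref{poho} together with the inequality $m(c)\le 0$, which is already contained in Lemma~\ref{negative}.

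First I would produce \eqref{full-equation} in weak form. By Proposition~\ref{complex} (or simply by hypothesis) $u_c\in X$ is real valued, and $u_c\not\equiv 0$ since $\|u_c\|_2^2=c>0$; in particular $\int_{\R^N}|\nabla u_c|^2dx$, $\int_{\R^N}u_c^2|\nabla u_c|^2dx$ and $\int_{\R^N}|u_c|^{p+1}dx$ are all strictly positive (vanishing of either of the first two would force $u_c$ to be constant, hence zero). For $\varphi\in C_0^\infty(\R^N,\R)$ I would use the $L^2$-normalized curve
$$
w_t=\sqrt{c}\,\frac{u_c+t\varphi}{\|u_c+t\varphi\|_2}\in X,\qquad \|w_t\|_2^2=c,\qquad w_0=u_c,
$$
defined for $|t|$ small. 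Since $u_c\in X$ and $\varphi$ is smooth and supported in a compact set $K$, the maps $t\mapsto\int_{\R^N}|\nabla(u_c+t\varphi)|^2dx$ and $t\mapsto\int_{\R^N}|u_c+t\varphi|^2|\nabla(u_c+t\varphi)|^2dx$ are polynomials in $t$ (their coefficients are integrable; for instance $\int_K u_c^2|\nabla u_c|\,dx\le\|u_c\|_{L^2(K)}\big(\int_K u_c^2|\nabla u_c|^2dx\big)^{1/2}<\infty$), while $t\mapsto\int_{\R^N}|u_c+t\varphi|^{p+1}dx$ is differentiable. Hence $t\mapsto{\mathcal E}(w_t)$ is differentiable at $t=0$, and since it attains its minimum value $m(c)={\mathcal E}(u_c)$ there, the derivative vanishes at $0$. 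A direct computation (using $\tfrac{d}{dt}\|u_c+t\varphi\|_2^2\big|_{t=0}=2\int_{\R^N}u_c\varphi\,dx$) then gives, for every $\varphi\in C_0^\infty(\R^N,\R)$,
$$
{\mathcal E}'(u_c)\varphi=\lambda_c\int_{\R^N}u_c\varphi\,dx,\qquad\lambda_c:=\frac{1}{c}\Big(\int_{\R^N}|\nabla u_c|^2dx+4\int_{\R^N}u_c^2|\nabla u_c|^2dx-\int_{\R^N}|u_c|^{p+1}dx\Big),
$$
where ${\mathcal E}'(u_c)\varphi=\int_{\R^N}\nabla u_c\cdot\nabla\varphi\,dx+2\int_{\R^N}u_c\varphi|\nabla u_c|^2dx+2\int_{\R^N}u_c^2\nabla u_c\cdot\nabla\varphi\,dx-\int_{\R^N}|u_c|^{p-1}u_c\varphi\,dx$. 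This is precisely the weak formulation of \eqref{full-equation}, with $\lambda_c\in\R$.

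Next I would apply the Pohozaev identity and conclude. Since $u_c\in X\subset X_\C$ solves \eqref{full-equation}, which is \eqref{gs1} with $\omega:=-\lambda_c$ (and the proof of Lemma~\ref{poho} nowhere uses the sign of $\omega$), Lemma~\ref{poho} yields $P(u_c)=0$, that is
$$
-\frac{\lambda_c}{2}\,c=\frac{1}{p+1}\int_{\R^N}|u_c|^{p+1}dx-\frac{N-2}{N}\Big(\frac12\int_{\R^N}|\nabla u_c|^2dx+\int_{\R^N}u_c^2|\nabla u_c|^2dx\Big).
$$
On the other hand $u_c$ being a minimizer gives ${\mathcal E}(u_c)=m(c)\le 0$ for every $c>0$ whenever $1<p<3+\tfrac4N$ (Lemma~\ref{negative}), equivalently $\tfrac12\int_{\R^N}|\nabla u_c|^2dx+\int_{\R^N}u_c^2|\nabla u_c|^2dx\le\tfrac{1}{p+1}\int_{\R^N}|u_c|^{p+1}dx$. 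If $N\le 2$ then $\tfrac{N-2}{N}\le 0$ and the Pohozaev identity already gives $-\tfrac{\lambda_c}{2}c\ge\tfrac{1}{p+1}\int_{\R^N}|u_c|^{p+1}dx>0$; if $N\ge 3$, substituting the last inequality into the Pohozaev identity gives $-\tfrac{\lambda_c}{2}c\ge\big(1-\tfrac{N-2}{N}\big)\tfrac{1}{p+1}\int_{\R^N}|u_c|^{p+1}dx=\tfrac{2}{N(p+1)}\int_{\R^N}|u_c|^{p+1}dx>0$. In every case $-\tfrac{\lambda_c}{2}c>0$, and since $c>0$ we obtain $\lambda_c<0$.

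The main obstacle is the rigorous derivation of the Euler--Lagrange equation in the first step: since the quasi-linear functional $u\mapsto\int_{\R^N}u^2|\nabla u|^2dx$ is not of class $C^1$ on $H^1(\R^N)$, no abstract Lagrange multiplier theorem is available, so one has to argue directly with directional derivatives along $C_0^\infty$ directions and the normalized curve $w_t$, checking the (elementary but slightly tedious) differentiability statements used above. Once the equation and the explicit formula for $\lambda_c$ are in hand, combining them with Lemma~\ref{poho} and Lemma~\ref{negative} is routine.
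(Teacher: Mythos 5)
Your proposal is correct and follows essentially the same route as the paper: derive the Euler--Lagrange equation ${\mathcal E}'(u_c)=\lambda_c u_c$ (which the paper dismisses as standard and you spell out via the normalized curve $w_t$), apply the Pohozaev identity of Lemma~\ref{poho}, and conclude $\lambda_c<0$ from ${\mathcal E}(u_c)=m(c)\leq 0$ (Lemma~\ref{negative}); your final inequality is just an algebraically equivalent rearrangement of the paper's identity ${\mathcal E}(u_c)=\frac{1}{N}\int_{\R^N}|\nabla u_c|^2+2|u_c|^2|\nabla u_c|^2\,dx+\frac{\lambda_c}{2}c$.
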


\begin{proof} It is standard to show that $u_c \in X$
satisfies~\eqref{full-equation} for $\lambda_c \in \R$ being the
associated Lagrange multiplier, namely
\begin{equation}
     \label{lagrange}
    {\mathcal E}'(u_c)=\lambda_c u_c.
\end{equation}
Now applying Pohozaev identity to~\eqref{lagrange} yields
$$
\frac{1}{p+1} \int_{\R^N} |u_c|^{p+1} dx =
\frac{N-2}{N}\left[\frac{1}{2}\int_{\R^N} |\nabla u_c|^2 dx +
\int_{\R^N} |u_c|^2 |\nabla u_c|^2 dx \right] -
\frac{\lambda_c}{2} \int_{\R^N} |u_c|^2 dx.
$$
Thus, we obtain
$$
{\mathcal E}(u_c) = \frac{1}{N}\int_{\R^N} |\nabla u_c|^2 + 2
|u_c|^2 |\nabla u_c|^2 dx  + \frac{\lambda_c}{2} \int_{\R^N}
|u_c|^2 dx.
$$
Since ${\mathcal E}(u_c) \leq 0$, see Lemma~\ref{negative}, we
deduce that $\lambda_c <0$.
\end{proof}

We can now give the

\vskip4pt \noindent {\it Proof of Theorem~\ref{no-minimizer}.} To
prove i) we assume by contradiction that there exist a sequence
$(c_n) \subset \R^+$ with $c_n \to 0$ as $n\to\infty$ and $(u_n)
\subset X$ such that $m(c_n)$ is reached by $u_n \in X$. Then we
know, from Lemma~\ref{negative}, Point (2), that ${\mathcal
E}(u_n)\leq 0$, for all $n\in\N$ and using~\eqref{1.1}, we get
\begin{equation}\label{estimate1}
  \int_{\R^N}|u_n|^2 |\nabla u_n|^2 dx \leq K \Big(
\int_{\R^N}|u_n|^2 |\nabla u_n|^2 dx\Big)^{\frac{\theta
N}{N-2}}||u_n||_2^{2- 2 \theta}.
\end{equation}
If $p = 3 + \frac{4}{N}$ we have $\frac{\theta N}{N-2} =1$  and $2- 2\theta =
\frac{4}{N} >0$. Thus, since $||u_n||_2 \to 0$, we immediately get a
contradiction from~\eqref{estimate1}. Now if $p < 3 + \frac{4}{N}$, we have
$\frac{\theta N}{N-2} <1$ and thus,
\begin{equation}\label{tozero}
\int_{\R^N}|u_n|^2 |\nabla u_n|^2 dx \to 0,\quad\text{as $n\to\infty$}.
\end{equation}
Still using~\eqref{1.1}, we see from~\eqref{tozero} that
$\|u_n\|_{p+1} \to 0$ as $n\to\infty$. In turn, also
\begin{equation}
    \label{gradient}
\|\nabla u_n \|_2 \to 0,\quad\text{as $n\to\infty$},
\end{equation}
since ${\mathcal E}(u_n) \leq 0$ implies that $$\|\nabla u_n\|_2^2
\leq \frac{2}{p+1}\|u_n\|_{p+1}^{p+1}, \quad \mbox{ for all }
n\in\N.$$ At this point we distinguish two cases. First assume
that $1 + \frac{4}{N} \leq p \leq \frac{N+2}{N-2}$ if $N \geq 3$,
$1 + \frac{4}{N} \leq p$ if $N=1,2$. By H\"older and Sobolev
inequalities we have
$$
\|u_n\|_{p+1}^{p+1} \leq K(p,N) \|\nabla
u_n\|_2^{\frac{N}{2}(p-1)}||u_n||_2^{p+1 - \frac{N}{2}(p-1)}.$$ Since
${\mathcal E}(u_n)\leq 0$ it follows that
\begin{equation}\label{estimates2}
\|\nabla u_n\|^2_2 \leq \frac{2}{p+1}\int_{\R^N}|u_n|^{p+1}dx \leq
K(p,N)\|\nabla u_n\|_2^{\frac{N}{2}(p-1)}||u_n||_2^{p+1- \frac{N}{2}(p-1)}.
\end{equation}
If $p = 1 + \frac{4}{N}$ we have $\frac{N}{2}(p-1) = 2$ and $p+1 -
\frac{N}{2}(p-1) >0$. Thus we get directly a contradiction since
$||u_n||_2 \to 0$. If $1 + \frac{4}{N} < p \leq \frac{N+2}{N-2}$
for $N \geq 3$ and $1 + \frac{4}{N}<p$ for $N=1,2$ we have
$\frac{N}{2}(p-1) >2$ and $p+1- \frac{N}{2}(p-1) \geq 0$. Thus
there exists a $d >0$ such that $||\nabla u_n||_2 \geq d$ for all
$n \in \N$, yielding a contradiction with~\eqref{gradient}.
\medskip

Now we treat the remaining case $\frac{N+2}{N-2} <p < 3 + \frac{4}{N}$ with $N
\geq 3$. First, let us show that for any $q \geq \frac{4N}{N-2}$ the sequence
$(u_n) \subset X$ belongs to $L^q(\R^N)$ and it is uniformly bounded in
$L^q(\R^N)$. For this we follow a Moser's iteration argument presented in
the proof of~\cite[Lemma 5.10]{LiWaWa2}. Since $u_n \in X$ is a minimizer
for~\eqref{0.1} with $c = c_n$ we know, by Lemma~\ref{lagrange-stat}, that $u_n
\in X$ weakly satisfies~\eqref{full-equation}. Namely that
\begin{equation*}
\int_{\R^N}(1+ 2|u_n|^2) \nabla u_n \cdot \nabla \phi  + 2u_n |\nabla u_n|^2
\phi \, - \lambda_n u_n \phi \, - |u_n|^{p-1}u_n \phi \, dx = 0,
\end{equation*}
where $\lambda_n <0$ is the Lagrange parameter and $\phi \in
C_0^{\infty}(\R^N, \R)$. By an approximation argument, it is
easily seen that we can take as test functions any function in
$X$ which satisfies
$$
\int_{\R^N} u^2 |\nabla \phi|^2 dx < \infty \quad \mbox{ and }
\quad \int_{\R^N}|\nabla u|^2 \phi^2 \, dx < \infty.
$$ In
particular, setting $q_0 = \frac{4N}{N-2}$, we can choose as test
function, for any $M >0$ and any fixed $n \in \N$, $\phi_n^M =
|u_n^M|^{q_0 - p -1}u_n^M$ where $u_n^M = u_n$ when $|u_n(x)| \leq
M$ and $u_n^M = \pm M$ when $u_n \geq \pm M$. We then have, since
$|u_n^M| \leq |u_n|$ and $|\nabla u_n^M| \leq |\nabla u_n|$ for
any $n \in \N$, $M >0$, and using the fact that $\lambda_n <0$,
that
$$
(q_0 -p) \int_{\R^N}(1 + 2 |u_n^M|^2) |u_n^M|^{q_0 -p -1}|\nabla
u_n^M|^2 dx \leq \int_{\R^N} |u_n|^{q_0}dx.
$$ 
Since $q_0- p >1$
we have, in particular,
$$
2 \int_{\R^N} |u_n^M|^{q_0 - p +1} |\nabla u_n^M|^2 dx \leq
\int_{\R^N}|u_n|^{q_0}\, dx.
$$ 
Finally, for $n \in \N$ fixed,
letting $M \to + \infty$ we obtain that
\begin{equation}
\label{controlbbis}
 2 \int_{\R^N} |u_n|^{q_0 - p +1} |\nabla u_n|^2 dx \leq
\int_{\R^N}|u_n|^{q_0}\, dx.
\end{equation}
Now, notice that, by Sobolev inequality,
\begin{equation*}
2 \int_{\R^N} |u_n|^{q_0 - p +1} |\nabla u_n|^2 dx = L(p,N) \int_{\R^N}
\big|\nabla  |u_n|^{r} \big|^2 dx  
\geq \tilde L(p,N) \|u_n^{r}\|_{\frac{2N}{N-2}}^2,
\end{equation*}
for some constants $L,\tilde L>0$, and where $$r = \frac{q_0 - p +3}{2}.$$ Thus $(u_n) \subset
L^{\frac{2Nr}{N-2}}(\R^N)$ and since, by~\eqref{tozero}, $(u_n) \subset
L^{q_0}(\R^N)$ is bounded, by~\eqref{controlbbis}, $(u_n) \subset
L^{\frac{2Nr}{N-2}}(\R^N)$  is also bounded. Since $ p < \frac{3N+2}{N-2}$ 
it follows that
$$
\frac{2Nr}{N-2} > q_0
$$ and the Moser iteration can be
continued further on. Thus, we obtain that $(u_n) \subset L^q(\R^N)$ for any
$q \geq q_0$ with $(u_n) \subset L^q(\R^N)$  bounded. At this
point, by H\"older and Sobolev inequalities we can write
\begin{equation}\label{match}
 \|u_n\|_{p+1}^{p+1} \leq C(p,N) \|\nabla u_n\|_2^{\alpha}\,
\|u_n\|_{(p-1)N}^{\beta} \end{equation} with 
$$
\alpha = \frac{2N(p-1) -
2(p+1)}{(p-1)(N-2)-2} \qquad \mbox{ and } \qquad \beta = N(p-1) \frac{(N-2) (p+1)
- 2N }{(p-1)N(N-2) - 2N}.
$$ Now as in~\eqref{estimates2}, using the fact that
${\mathcal E}(u_n) \leq 0$, we get that
$$
\|\nabla u_n\|_2^2 \leq K(p,N)\|\nabla u_n\|_2^{\alpha}\,
\|u_n\|_{(p-1)N}^{\beta}.$$ As $p >1$ we have $\alpha >2$ and since $(u_n)
\subset L^{(p-1)N}(\R^N)$ is bounded we obtain again a contradiction
with~\eqref{gradient}. Notice that, in~\eqref{match}, the coefficient $(p-1)N$ as
no particular meaning, we just choose it sufficiently large in order to insure
that, in turn, $\alpha >2$.  This proves Point i) since if $m(c) <0$ a minimizer always
exists by Lemma~\ref{general-minimizer}. \smallskip

For the proof of Point ii) we know from, Lemma~\ref{negative},
Point (3), that there exists a $c>0$ such that $m(c) <0$. Now let
$d>0$ be such that $m(d) <0$ and $u \in X$ be an associated
minimizer. We consider the scaling $v(x) = u(\sigma^{-
\frac{1}{N}}x)$ used in the proof of
Lemma~\ref{general-minimizer}. For $\sigma
>1$ we have $\|v\|_2^2 >d$ and ${\mathcal E}(v) < {\mathcal
E}(u)$. This proves the claim. We also point out that very likely
the function $\{c \to m(c)\}$ is continuous for $c>0$ so that also
$m(c(p,N))=0$. However we did not pursue in that direction.\qed

\section{Orbital stability}
\label{stability}

In this section we prove the orbital stability result, Theorem
\ref{stab}. The key point is to prove the following:

\begin{lemma}
        \label{all-converge}
Assume that $1 < p < 3 + \frac{4}{N}$ and that $c>0$ is such that
$m(c) <0$. Then for any real minimizing sequence of~\eqref{0.1},
there exists a subsequence that is strongly converging in $X$, up
to a translation in $\R^N$.
\end{lemma}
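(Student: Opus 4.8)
The plan is to run the concentration–compactness principle on the mass densities $|u_n|^2\,dx$ of an arbitrary real minimizing sequence $(u_n)\subset X$ for $m(c)$, the two decisive inputs being the a priori bounds of Lemma~\ref{boundedness} and a \emph{strict} subadditivity inequality for $c\mapsto m(c)$. By Lemma~\ref{boundedness}, $(u_n)$ is bounded in $X$, hence in $H^1(\R^N)$, and $\int_{\R^N}|u_n|^2|\nabla u_n|^2\,dx$ is bounded, so that $\nabla(u_n^2)=2u_n\nabla u_n$ is bounded in $L^2(\R^N)$; together with $\|u_n^2\|_1=c$ and Sobolev's inequality this gives that $(u_n^2)$ is bounded in $L^{2N/(N-2)}(\R^N)$ when $N\geq3$ (in every $L^q(\R^N)$, $q<\infty$, when $N=2$, in $L^\infty(\R^N)$ when $N=1$), i.e.\ $(u_n)$ is bounded in $L^{q_0}(\R^N)$ with $q_0=\tfrac{4N}{N-2}$ (resp.\ in every $L^q$, resp.\ in $L^\infty$). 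Since $2<p+1<q_0$, this extra integrability is what will let one handle the exponent $p+1$, which may exceed $2^{*}$ (as for $N=4$).

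For the strict subadditivity I would first prove that $m(\theta\beta)<\theta\,m(\beta)$ whenever $\theta>1$ and $m(\beta)<0$. Given a near minimizer $w$ of $m(\beta)$ with $\mathcal{E}(w)<0$ and the dilation $v(x)=w(\theta^{-1/N}x)$ used in the proof of Lemma~\ref{general-minimizer}, a direct computation gives $\|v\|_2^2=\theta\beta$ and $\mathcal{E}(v)=\theta\,\mathcal{E}(w)+\bigl(\theta^{1-2/N}-\theta\bigr)\bigl(\tfrac12\|\nabla w\|_2^2+\int_{\R^N}|w|^2|\nabla w|^2\,dx\bigr)$, where $\theta^{1-2/N}-\theta<0$ for every $N\geq1$ and $\theta>1$. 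Moreover, using $w^2\in H^1\hookrightarrow L^{2N/(N-2)}$ (and $\|w\|_2^2=\beta$ fixed) one checks that if the bracket were arbitrarily small then $\|w\|_{p+1}$, hence $\mathcal{E}(w)$, would be arbitrarily close to $0$; so, when $m(\beta)<0$, the bracket is bounded below by a positive constant along near minimizers, which yields the displayed strict inequality. In particular $\beta\mapsto m(\beta)/\beta$ is nonincreasing on $(0,\infty)$ and strictly decreasing on $\{\beta:m(\beta)<0\}$, and a short case analysis on the position of $\mu$ and $c-\mu$ relative to $\inf\{\beta:m(\beta)<0\}$ then gives $m(c)<m(\mu)+m(c-\mu)$ for every $0<\mu<c$.

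Equipped with these, I would apply the concentration–compactness lemma to $\rho_n=|u_n|^2\,dx$, of total mass $c$. \emph{Vanishing} is excluded: if $\sup_{y}\int_{B(y,R)}|u_n|^2\,dx\to0$ for every $R>0$, the standard vanishing lemma gives $u_n\to0$ in $L^{r}(\R^N)$ for any $r\in(2,\min\{p+1,2^{*}\})$, and global interpolation with the uniform $L^{q_0}$ bound (using $r<p+1<q_0$) gives $\|u_n\|_{p+1}\to0$, whence $\mathcal{E}(u_n)\geq-\tfrac1{p+1}\|u_n\|_{p+1}^{p+1}\to0$, contradicting $m(c)<0$. \emph{Dichotomy} is excluded by the strict subadditivity: a splitting of $u_n$ into pieces of masses tending to $\mu$ and to $c-\mu$ with mutually receding supports gives, after renormalizing the masses and using the continuity of $\mathcal{E}$ together with the almost superadditivity $\mathcal{E}(u_n)\geq\mathcal{E}(u_n^{(1)})+\mathcal{E}(u_n^{(2)})-o(1)$, the inequality $m(c)\geq m(\mu)+m(c-\mu)$. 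Hence compactness holds: there exist $y_n\in\R^N$ such that for every $\eps>0$ one has $\int_{B(y_n,R)}|u_n|^2\,dx\geq c-\eps$ for $R$ large and all $n$.

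Finally, set $\tilde u_n=u_n(\cdot+y_n)$; passing to a subsequence, $\tilde u_n\rightharpoonup v$ in $X$, and by tightness and Rellich's theorem $\tilde u_n\to v$ in $L^2(\R^N)$, so $\|v\|_2^2=c$, and interpolating with the uniform $L^{q_0}$ bound, $\tilde u_n\to v$ in $L^{p+1}(\R^N)$. Then the weak lower semicontinuity of $T$ (Lemma~\ref{weak-convergence}(1)) together with the convergence of the $L^{p+1}$ norms gives $\mathcal{E}(v)\leq\liminf\mathcal{E}(\tilde u_n)=m(c)$, while $\|v\|_2^2=c$ forces $\mathcal{E}(v)\geq m(c)$; hence $\mathcal{E}(v)=m(c)$ and $T(\tilde u_n)\to T(v)$. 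Since $T$ is the sum of the two nonnegative weakly l.s.c.\ functionals $\tfrac12\|\nabla\cdot\|_2^2$ and $\int|\cdot|^2|\nabla\cdot|^2$, each of them converges to its limit; combined with $\tilde u_n\to v$ in $L^2$ and $\tilde u_n\rightharpoonup v$ in $H^1$ this yields $\tilde u_n\to v$ strongly in $H^1$, and then $\nabla(\tilde u_n^2)=2\tilde u_n\nabla\tilde u_n\to\nabla(v^2)$ in $L^2$ (weak $L^2$ convergence plus convergence of the norms), so $\tilde u_n\to v$ strongly in $X$. The main obstacle is the strict subadditivity step above, together with the bookkeeping in the dichotomy alternative: one must verify that truncating $u_n$ in the quasi-linear energy $\int|u|^2|\nabla u|^2$ produces only $o(1)$ errors, which forces one to control, via the uniform $L^{q_0}$ bound, the terms generated by the gradients of the cut-off functions on the annular region where the mass is $O(\eps)$. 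Everything else is routine.
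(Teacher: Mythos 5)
Your proposal is correct and follows essentially the same route as the paper: concentration–compactness applied to the mass density, vanishing excluded through $m(c)<0$ via the interpolation/Sobolev bound coming from the boundedness of $\int |u_n|^2|\nabla u_n|^2\,dx$, dichotomy excluded through the strict subadditivity $m(\theta\beta)<\theta\,m(\beta)$ obtained from the dilation $x\mapsto\theta^{-1/N}x$, and finally weak lower semicontinuity of $T$ plus convergence of the $L^{p+1}$ norms to upgrade to strong convergence in $X$. The only (immaterial) deviations are that you run the strict subadditivity on near-minimizers with a quantitative lower bound on the kinetic plus quasi-linear bracket where the paper simply uses the minimizer furnished by Lemma~\ref{general-minimizer}, and that you apply the vanishing lemma to $u_n$ in $H^1$ with the uniform $L^{4N/(N-2)}$ bound rather than to $\rho_n=u_n^2$ as in the paper; the cut-off bookkeeping for $\int|u|^2|\nabla u|^2$ that you defer is exactly the estimate the paper carries out to get its inequality~\eqref{cinque}, and your sketch of it is consistent with that computation.
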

\begin{proof}
Let $(u_n)\subset X$ be any minimizing sequence for
problem~\eqref{0.1}. We will prove the assertion by means of
Lions's Compactness-Concentration Principle
(cf.~\cite{pll-1,pll-2}), applied to the  sequence
$$
\rho_n(x)=u_n^2(x),\qquad n\in\N.
$$
Let us first proves that the vanishing, namely
$$
\sup_{y \in \R^N}\int_{y + B_R}|u_n|^2 dx \to 0 \quad \mbox{for
all } R >0,
$$
cannot occur. By Lemma~\ref{boundedness}, we know that
\begin{equation} \label{borne}
(u_n) \subset X \, \mbox{ is bounded  and } \,
\int_{\R^N}|\nabla(u_n^2)| dx \mbox{ is bounded in } \R.
\end{equation}
We apply ~\cite[Lemma I.1]{pll-2} to the sequence $\rho_n$.
Indeed, $\rho_n$ is bounded in $L^1(\R^N)$ and $\nabla \rho_n$ is
bounded in $L^2(\R^N)$. Then for every $\alpha$ such that $1\leq
\alpha\leq \frac{2N}{N-2}$, $\rho_n \to 0$ in
$L^\alpha(\R^N)$, as $n$ goes to $\infty$. Taking
$\alpha=\frac{p+1}{2}$ (this choice is valid since $1<p<3+\frac{4}{N}$) provides
$$
\|\rho_n\|_{\frac{p+1}{2}}=\|u_n\|_{p+1}^2\to 0\quad\text{as $n\to\infty$},
$$
and then $\liminf_{n\to\infty} \mathcal{E}(u_n)\geq 0$, which contradicts the
fact that $m(c) <0$. Now, by following the lines of the proof
of~\cite[Lemma III.1]{pll-1}, one can show that there exists a
subsequence $u_{n_k}$ (that we will still denote by $(u_n)$) such
that either compactness occurs or dichotomy occurs in the
following sense: there exists $\alpha\in(0,c)$ such that, for all
$\eps>0$, there exists $k_0\geq 1$ and two sequences
$(u_n^1),(u_n^2)$ bounded in $X$ such that, for all $k\geq k_0$,
\begin{align}
        \label{prima}
& \|u_n-(u_n^1+u_n^2)\|_{L^{p+1}}\leq \delta(\eps),
\quad\text{$1 < p < 3 + \frac{4}{N}$,\quad with $\delta(\eps)\to 0$ for $\eps\to 0$,} \\
& \left|\int_{\R^N}(u_n^1)^2dx-\alpha\right|\leq\eps,\quad
\left|\int_{\R^N}(u_n^1)^2dx-(c-\alpha)\right|\leq\eps,  \notag \\
& {\rm dist}({\rm supp}\,u_n^1,{\rm supp}\,u_n^2)\to
\infty,\quad\text{as $n\to\infty$},
\notag \\
& \liminf_{k\to\infty}\int_{\R^N}(|\nabla u_n|^2-|\nabla
u_n^1|^2-|\nabla u_n^2|^2)dx\geq 0,
\label{quattro} \\
& \liminf_{k\to\infty}\int_{\R^N}(|\nabla (u_n)^2|^2-|\nabla
(u_n^1)^2|^2-|\nabla (u_n^2)^2|^2)dx\geq 0. \label{cinque}
\end{align}
We point out that, only inequalities~\eqref{prima}
and~\eqref{cinque} have to be proved, the other inequalities are
already contained in~\cite[Lemma III.1]{pll-1}. Because
of~\eqref{borne} and taking into account inequality~\eqref{1.1} we
learn that there exists a positive constant $K$ such that, for all
$n\in\N$,
\begin{equation}
        \label{controllo-p-2}
    \int_{\R^N}|u_n|^{p+1} dx \leq  K \Big( \int_{\R^N} |u_n|^2 dx\Big)^{1 - \theta},
\qquad \theta = \frac{(p-1)(N-2)}{2(N+2)}.
\end{equation}
Thus, inequality~\eqref{prima} follows from the corresponding
inequality for the $L^2$ norm which is contained in the proof
of~\cite[Lemma III.1]{pll-1}. Now inequality~\eqref{cinque} can be
obtained by arguing as for the proof of~\eqref{quattro}. Indeed,
notice that, if $\varphi_R$ is a given smooth cut-off function,
$0\leq\varphi_R\leq 1$, $\varphi_R=1$ on $B(0,R)$, $\varphi_R=0$
outside $B(0,2R)$ and $|\nabla \varphi_R|\leq \frac{1}{R}$, and
$v_n$ is a sequence in $X$ satisfying the boundedness
condition~\eqref{controllo-p-2}, then we have
\begin{align*}
|\nabla (\varphi_R v_n)^2|^2-\varphi_R^4|\nabla v^2_n|^2
&=4\varphi_R^3v_n^2\nabla\varphi_R\cdot\nabla v_n^2
+4\varphi_R^2|\nabla\varphi_R|^2v_n^4 \\
&\leq      2\varphi_R^3|\nabla\varphi_R|v_n^4
+2\varphi_R^3|\nabla\varphi_R||\nabla v_n^2|^2
+4\varphi_R^2|\nabla\varphi_R|^2v_n^4,
\end{align*}
for all $n\geq 1$, yielding
$$
\left| \int_{\R^N}|\nabla (\varphi_R v_n)^2|^2 dx-
\int_{\R^N}\varphi_R^4|\nabla v^2_n|^2dx
\right|\leq\frac{C}{R},\quad\text{for all $n\geq 1$},
$$
for some positive constant $C$ independent of $n$. This last
inequality is therefore sufficient to obtain Inequality
\eqref{cinque}.

Now, it is standard to see that if the dichotomy property holds
(with the inequalities indicated above), then sending $\eps$ to
zero, the following inequality holds true
\begin{equation*}
m(c)\geq m(\alpha) + m(c- \alpha).
\end{equation*}
To conclude we now prove that instead we have, for any $c_1,c_2>0$
such that $c_1 +c_2 =c$,
\begin{equation}
        \label{dichotomy}
m(c)< m(c_1) + m(c_2).
\end{equation}
In light of~\cite[Lemma II.1]{pll-1}, to show
that~\eqref{dichotomy} holds, it is sufficient to prove that, for
any $d>0$ such that $m(d) <0$,
\begin{equation}
        \label{sur-dichotomy}
m(\lambda d) < \lambda m(d),\quad\text{for any $\lambda>1$}.
\end{equation}
To prove inequality~\eqref{sur-dichotomy} we observe that, if $u_d
\in X$ is a minimizer of $m(d)$, then setting $v(x) =
u_d(\lambda^{- \frac{1}{N}}x)$ we have $\|v\|_2^2 = \lambda d$ and
\begin{align*}
{\mathcal E}(v) & =  \lambda^{1 - \frac{2}{N}} \Big[ \int_{\R^N}
\frac{1}{2} |\nabla u_d|^2  + |u_d|^2 |\nabla u_d|^2 dx
\Big]- \lambda \int_{\R^N}|u_d|^{p+1} dx  \\
& =  \lambda \Big[ \lambda^{ - \frac{2}{N}}
\int_{\R^N}\big(\frac{1}{2} |\nabla u_d|^2  +|u_d|^2 |\nabla
u_d|^2
dx\big) -  \int_{\R^N}|u_d|^{p+1}dx \Big] \\
& <   \lambda  m(d).
\end{align*}
Thus ${\mathcal E}(v) < \lambda m(d)$ which lead to $m(\lambda d)
< \lambda m(d)$, proving the claim. \medskip

Since we ruled out both vanishing and dichotomy, we have
compactness for $\rho_n$, namely we know that there exists a
sequence $(y_n)\subset \R^N$ such that, for any $\eps>0$, there is
$R>0$ with
\begin{equation}\label{compact}
\int_{y_n +B_R}|u_n|^2 dx \geq c - \varepsilon.
\end{equation}
We then denote $\tilde{u}_n= u_n (\cdot + y_n)$ and clearly from
inequality~\eqref{compact} we have $\tilde{u}_n \to \tilde{u}$
strongly in $L^2(\R^N)$, as $n\to\infty$. By~\eqref{controllo-p-2}
we then see that $\tilde{u}_n \to \tilde{u}$ strongly in
$L^p(\R^N)$. At this point, taking into account Point 1) of
Lemma~\ref{weak-convergence}, and since $\tilde{u}_n
\rightharpoonup \tilde{u}$ in $X$, we get that ${\mathcal
E}(\tilde{u}) \leq \liminf {\mathcal E}(\tilde{u_n}) = m(c).$ This
proves that $\tilde{u} \in X$ minimize~\eqref{0.1} and then,
necessarily, $\nabla \tilde{u}_n \to \nabla u$ in $L^2(\R^N)$, as
$n\to\infty$, proving the strong convergence of $\tilde{u}_n$ to
$\tilde{u}$ in $X$. This concludes the proof.
\end{proof}
\medskip

Now we can give the
\vskip3pt
\noindent {\it Proof of
Theorem~\ref{stab}.} First note that if $(u_n)$ is a minimizing
sequence for ~\eqref{0.1}, then $(|u_n|)$ is also a minimizing
sequence and is real. Then by Lemma~\ref{all-converge}, there
exists a subsequence $(|u_{n_k}|)$ of $(|u_n|)$ and a sequence
$(y_{n_k}) \subset \R^N$ such that $(|u_{n_k}(\cdot - y_{n_k})|)$
converges strongly in $H^1(\R^N)$ toward $u$ where $u$ is real and
solves~\eqref{0.1}. Then the result follows by standard
considerations (see, for example,~\cite{CaLi}). \qed

\begin{remark}\label{lien}
Take any solution $u$ to problem~\eqref{defvalc}, namely
$\|u\|_2^2=c$ and $\mathcal{E}(u)=m(c)$. Then it is a classical
fact that there exists a parameter $\omega^*$, depending on $c$
and $u$, such that $u$ solves equation~\eqref{gs1} with
$\omega=\omega^*$ (see Lemma~\ref{lagrange-stat}). If aim to study
the orbital stability issue of the ground states of~\eqref{gs1}
via the constrained approach (as it is the case in the classical
paper of Cazenave-Lions~\cite{CaLi}) we need to have more
informations on the ground states of~\eqref{gs1}. In particular we
need to know that they share the same $L^2$ norm. Except when
$N=1$ where we have the uniqueness of the ground states, this
information is not available to us. Now, when $N=1$ we still need
to know if, when $u_1$ and $u_2$ are two distinct solutions to the
minimization problem~\eqref{defvalc}, then we have
$\omega^*_1=\omega^*_2$. We did not manage to show this.
\end{remark}

\begin{remark}\label{connections}
In~\cite{BoCh} two results about orbital stability are presented.
When $N \geq 2$, in Theorem 3.3, assuming that $1 <p < 1+
\frac{4}{N}$ a result of orbital stability  within the class of
radial functions, is announced for the minimizers
of~\eqref{defvalc}. However, the proof is incorrect and, we guess,
can be fixed only assuming in addition that $p\geq 3$. When $N=1$,
$4\leq p <6$, Proposition 4.7 guarantees the orbital stability of
the minimizers of~\eqref{defvalc} but Theorem 4.8, which establish
the connection with the problem where $\omega >0$ is fixed, is
false because the uniqueness of the Lagrange parameter is not
known (see Remark~\ref{lien}).
\end{remark}

\vskip40pt

\bigskip
\bigskip

\end{document}